\DeclareFontFamily{OT1}{pzc}{}
\DeclareFontShape{OT1}{pzc}{m}{it}{<-> s * [1.10] pzcmi7t}{}
\DeclareMathAlphabet{\mathpzc}{OT1}{pzc}{m}{it}
\DeclareSymbolFontAlphabet{\mathbb}{AMSb}
\DeclareSymbolFontAlphabet{\mathbbl}{bbold}
\newcommand{\noun}[1]{\textsc{#1}}
\numberwithin{equation}{section}
\numberwithin{figure}{section}
\theoremstyle{plain}
\newtheorem{thm}{\protect\theoremname}[section]
\theoremstyle{remark}
\newtheorem{rem}[thm]{\protect\remarkname}
\theoremstyle{definition}
\newtheorem{defn}[thm]{\protect\definitionname}
\theoremstyle{definition}
\newtheorem{example}[thm]{\protect\examplename}
\theoremstyle{plain}
\newtheorem{cor}[thm]{\protect\corollaryname}
\theoremstyle{plain}
\newtheorem{prop}[thm]{Proposition}
\theoremstyle{plain}
\newtheorem{lem}[thm]{Lemma}
\def\d{\partial}
\def\uo{\underline{0}}
\def\ux{\underline{x}}
\def\QQ{\mathbb{Q}}
\def\RR{\mathbb{R}}
\def\CC{\mathbb{C}}
\def\ZZ{\mathbb{Z}}
\def\PP{\mathbb{P}}
\def\DD{\mathbb{D}}
\def\MM{\mathbb{M}}
\def\GG{\mathbb{G}}
\def\VV{\mathbb{V}}
\def\F{\mathcal{F}}
\def\V{\mathcal{V}}
\def\X{\mathcal{X}}
\def\M{\mathcal{M}}
\def\H{\mathcal{H}}
\def\HH{\mathbb{H}}
\def\KK{\mathbb{K}}
\def\D{\mathcal{D}}
\def\vf{\varphi}
\def\k12{\mathcal{K}_{\lambda_1,\lambda_2}}
\def\tk12{\tilde{\mathcal{K}}_{\lambda_1,\lambda_2}}
\def\ck12{\check{\mathcal{K}}_{\lambda_1,\lambda_2}}
\def\bx{\zeta}
\def\ay{\mathbf{i}}
\def\ve{\varepsilon}
\def\e{\epsilon}
\def\cx{\mathcal{X}}
\def\IH{\mathrm{IH}}
\def\co{\mathcal{O}}
\def\ord{\mathrm{ord}}
\def\rk{\mathrm{rk}}
\def\F{\mathcal{F}}
\def\fb{\F^{\bullet}}
\def\gr{\mathrm{Gr}}
\def\cl{\mathcal{L}}
\def\clv{\cl^{\dagger}}
\def\B{\mathcal{B}}
\def\r{\mathpzc{r}}
\def\q{\mathpzc{q}}
\def\mc{\mathpzc{m}_c}
\def\mm{\mathpzc{m}}
\def\fa{\mathfrak{a}}
\def\vc{\ve^{\circ}}
\def\K{\mathcal{K}}
\def\E{\mathcal{E}}
\def\KK{\mathbb{K}}
\def\EE{\mathbb{E}}
\def\rt{\tilde{\mathcal{R}}}
\def\el{\mathpzc{L}}
\def\nf{\mathpzc{v}}
\def\v{\mathsf{v}}
\def\vt{\vartheta}
\def\fu{\mathfrak{U}}
\def\fux{\fu^{\times}}
\def\fuc{\fu_c}
\def\fuo{\fu_0}
\def\fucx{\fuc^{\times}}
\def\fuox{\fuo^{\times}}
\def\NP{\mathbbl{\Delta}}
\theoremstyle{definition}
\theoremstyle{definition}
\theoremstyle{definition}
\newtheorem*{thx}{Acknowledgments}
  \providecommand{\corollaryname}{Corollary}
  \providecommand{\definitionname}{Definition}
  \providecommand{\remarkname}{Remark}
\providecommand{\theoremname}{Theorem}
 \providecommand{\examplename}{Example}
\begin{document}

\title[Unipotent extensions and differential equations]{Unipotent extensions and differential equations (after Bloch-Vlasenko)}

\author{Matt Kerr}

\subjclass[2000]{14C30, 14D07, 19E15, 32G30, 32S40}
\begin{abstract}
S. Bloch and M. Vlasenko recently introduced a theory of \emph{motivic Gamma functions}, given by periods of the Mellin transform of a geometric variation of Hodge structure, which they tie to the monodromy and asymptotic behavior of certain unipotent extensions of the variation.  Here we further examine these Gamma functions and the related \emph{Ap\'ery and Frobenius invariants} of a VHS, and establish a relationship to motivic cohomology and solutions to inhomogeneous Picard-Fuchs equations.
\end{abstract}
\maketitle

\section{Introduction}\label{SI}

The Frobenius method for solving linear ODEs in the neighborhood of a regular singular point (see for example \cite{Yo}) goes all the way back to \cite{Fr}.  The significance of the resulting basis of solutions in Hodge theory and mirror symmetry has recently been elevated by two seminal papers.  In their proof of the Gamma Conjecture for rank-one Fano threefolds \cite{GZ}, Golyshev and Zagier studied the Frobenius solutions for the regularized quantum differential equations of these Fanos, using their monodromy to define constants $\kappa_0,\kappa_1,\kappa_2,\kappa_3$ and matching those to the coefficients of the Gamma-class of each Fano; they also obtain a natural extension of the $\{\kappa_j\}$ to a (more mysterious) infinite sequence.  Subsequently, Bloch and Vlasenko \cite{BV} generalized these \emph{Frobenius constants} to a broader class of Picard-Fuchs equations, and gave them a new interpretation, as periods of the limiting mixed Hodge structure of the underlying variation and its unipotent extensions.  They also showed that the generating series $\kappa(s):=\sum_{j\geq 0}\kappa_j s^j$ is essentially a \emph{motivic Gamma function}, that is, a period of the Mellin transform (as defined by \cite{LS}) of the underlying $\mathcal{D}$-module.

In this paper, we study the properties of $\kappa(s)$ for a particular class of Picard-Fuchs equations, attached to polarized variations of Hodge structure over a Zariski open set $U\subset \PP^1$ with all Hodge numbers equal to $1$ (and a few other properties detailed below).  Our first main goal is simply to give a streamlined presentation of the main results of Bloch and Vlasenko in this case, making occasional technical improvements (Theorems \ref{T5.2} and \ref{T7.1}), and using the polarization to make the ``$\Gamma=\kappa$'' result more explicit (Theorem \ref{T6.1}).  We also highlight how their work can be used to compute LMHSs (Example \ref{E5.1}) and produce a \emph{limiting motive} in the hypergeometric case (Remark \ref{R6.4}).  Our second goal is to interpret certain features of $\kappa$ in terms of motivic cohomology and admissible normal functions.  For instance, if the variation has weight $n$ (and rank $n+1$), then $\kappa_{n+1}$ is the first Frobenius number not related to its LMHS; in Theorem \ref{T7.2}, we obtain a motivic interpretation of the ``first unipotent extension'' of \cite[\S5]{BV}, and hence of this number, confirming a speculation in the closing pages of [loc. cit.].  In \S\ref{S8}, we investigate the values of $\kappa$ at positive integers, which we term \emph{Ap\'ery constants}.  After characterizing them as special values of solutions to inhomogeneous equations (Theorem \ref{T8.1}), we interpret them in some cases as regulators of higher cycles (Theorems \ref{T8.2} and \ref{T8.3}).

In the remainder of this Introduction, we offer a brief mathematical \emph{dramatis personae} for the reader's reference (see the next page).

To set the scene:\footnote{For simplicity, we impose assumptions largely avoided in the text:  strong conifold monodromy at $c$ (which goes a bit beyond $\rk(T_c-I)=1$, see \S\ref{S3}), self-adjointness of $L$ (see \S\S\ref{S4}-\ref{S5}), and $\M$ arising from a family defined over $\bar{\QQ}$.} let $\Sigma=\{0,c,\ldots,\infty\}\subset\PP^1$ be finite, with $|c|<|c'|$ for all $c'\in \Sigma\setminus\{0,c\}$.  Let $\mathbb{D}$ be an open disk centered about $0$ with $\mathbb{D}\cap \Sigma=\{0,c\}$; and, writing $U:=\PP^1\setminus\Sigma$, fix $p\in\mathbb{D}\cap U$.  Consider a $\bar{\QQ}$-motivic, polarized $\QQ$-VHS $\M$ on $U$, of weight $n$ with Hodge numbers $h^{p,n-p}=1$ ($0\leq p\leq n$).\footnote{Since $\M$ has a rational polarization $Q$, it is self-dual, so that the dual of $D^mL$ is $LD^m$ below. We still find it useful however to formally distinguish $\M$ and $\M^{\vee}$ for some purposes: we use $Q(\cdot,\cdot)$ to denote the pairing on $\M$ (or $\M^{\vee}$), and $\langle\cdot,\cdot\rangle$ for the pairing of $\M$ and $\M^{\vee}$; see \S\ref{S3}.} Suppose the underlying local system has maximal unipotent monodromy at $t=0$, and strong conifold monodromy at $t=c$, represented by $T_0,T_c\in \mathrm{Aut}(\MM_{\QQ,p})$ (with $N_0:=\log(T_0)$); assume in addition that $\ker(T_0-I)\cap \ker(T_c-I)=\{0\}$.  Write $\gamma_0,\gamma_c\in\pi_1(\mathbb{D}\cap U)$ for loops based at $p$ winding once about $0,c$.

Fixing $\ve_0\in (\MM_{\QQ,p}^{\vee})^{T_0}$, there is a unique basis $\{\ve_0,\ldots,\ve_n\}\subset \MM_{\QQ,p}^{\vee}$ such that $N_0\ve_j=\ve_{j-1}$ and $(T_c-I)\ve_j=0$ for $j>0$.  Set $\delta:=(T_c-I)\ve_0\in\MM_{\QQ,p}^{\vee}$ and put $\mathsf{Q}_0:=Q(\ve_0,\ve_n)$, $\mathsf{Q}_c:=Q(\ve_0,\delta)$ (both in $\QQ^{\times}$).  Choose $\mu\in H^0(\PP^1,\F^n_e\M_e)$ the (unique) section of the canonically extended Hodge line which is nowhere zero on $\PP^1\setminus\{\infty\}$, and normalized so that the ``fundamental period'' $\phi_0(t):=\langle \ve_0,\mu\rangle =:\sum_{k\geq 0}a_k t^k$ has $a_0=1$.  Write $\psi(t):=\langle\delta,\mu\rangle$ and $\e_j(t):=\langle \ve_j,\mu\rangle$ for other periods, and $\e_j^{\text{an}}(t)$ for the analytic (at $0$) part of $\epsilon_j(t)$.  The left-hand column of the period matrix of the LMHS of $\M$ at $0$ is given by $(2\pi\ay)^j\e_j^{\text{an}}(0)$, $0\leq j\leq n$.

\subsection*{Picard-Fuchs}
$L:=\sum_{j=0}^d t^j P_j(D)\in \CC[t,D]$ is the minimal operator with $\nabla_L\mu=0$ (hence $L\e_j=0=L\psi$).  It has order $n+1$ and degree $d$.

\subsection*{Conifold Gamma}
$\Gamma_c(s):=\sum_{k=0}^{n+1}(-1)^{n+1-k}\binom{n+1}{k}e^{2\pi\ay ks}\int_{\gamma_0^{-k}}\psi(t)t^{s}\tfrac{dt}{t}+(e^{2\pi\ay s}-1)^{n+1}\int_{\gamma_c}\e_0(t)t^{s}\tfrac{dt}{t}$ is entire, with $\sum_{j=0}^d P_j(-s-j)\Gamma_c(s+j)=0$, and $\Gamma_c(-m)=(-1)^{n+1}\tfrac{\mathsf{Q}_c}{\mathsf{Q}_0}(2\pi\ay)a_k$ for $k\in\ZZ_{\geq 0}$.

\subsection*{Frobenius periods}
$\Phi(s,t)=\sum_{\ell\geq 0}\phi_{\ell}(t)s^{\ell}$ is uniquely defined by $L\Phi=s^{n+1}t^s$ and $T_0\Phi=e^{2\pi\ay s}\Phi$.  Write $\phi_{\ell}(t)=:\sum_{b=0}^{\ell}\tfrac{1}{b!}\log^{b}(t)\phi_{\ell-b}^{\text{an}}(t)$ and $\phi_{\ell}^{\text{an}}(t)=:\sum_{k\geq 0}a_k^{(\ell)}t^k$ (noting that $a_k^{(0)}=a_k$, and $a_0^{(\ell>0)}=0$).  Then $A_k(s):=\sum_{\ell\geq 0}a^{(\ell)}_k s^{\ell}$ satisfies $\Phi(s,t)=\sum_{k\geq 0}A_k(s) t^{s+k}$.

\subsection*{Kappa series}
$(T_c-I)\Phi(s,t)=:\kappa(s)\psi(t)$, with $\kappa(s)=:\sum_{j=0}^{\infty}\kappa_j s^j$ and $\kappa(s)^{-1}=:\sum_{j=0}^{\infty}\alpha_j s^j$.  We have $\kappa_0=\alpha_0=1$ and $\alpha_j=(2\pi\ay)^j \e_j^{\text{an}}(0)$ for $0\leq j\leq n$.  Moreover, we have the asymptotic formulas $\kappa(s)=c^s\cdot \lim_{k\to \infty}\tfrac{A_k(s)}{a_k}$ and $\kappa_j=\sum_{j=0}^{\ell}\tfrac{1}{j!}\log^j(c)\cdot \lim_{k\to \infty}\tfrac{a_k^{(\ell-j)}}{a_k}$.  The basic relation between Gamma and kappa is $\Gamma_c(s)=\tfrac{(1-e^{2\pi\ay s})^{n+1}\mathsf{Q}_c}{(2\pi\ay)^n s^{n+1}\mathsf{Q}_0}\kappa(s)$ in this self-dual setting.  At $s\sim -k$ we therefore have $\kappa(s)\sim \tfrac{(-k)^{n+1}}{(s+k)^{n+1}}a_k$.
 
\subsection*{Unipotent extension}
Fix $m\in\ZZ_{>0}$.  There is a unique extension $0\to\mathcal{K}_m\to \mathcal{E}_m \to \M\to 0$ of admissible $\QQ$-VMHS on $\Delta_0^{\times}$ (a small punctured disk about $0$) with underlying $\QQ$-local system $\mathbb{E}_m$ extending to $\mathbb{D}\cap U$, underlying $\mathcal{D}$-module $\mathcal{D}/\mathcal{D}D^m L$, and with $\mathcal{K}_m$ of rank $m$ with Hodge numbers $h^{-m,-m}=\cdots=h^{-1,-1}=1$.  The coefficients $\{\alpha_j\}_{0\leq j\leq n+m}$ of $\kappa(s)^{-1}$ yield the left-hand column of the period matrix of the LMHS of $\mathcal{E}_m$ at $0$.

\underline{Key Example 1}: if $\varphi\in \bar{\QQ}[x_1^{\pm1},\ldots,x_{n+1}^{\pm1}]$ is reflexive and tempered, and $f=\tfrac{1}{\vf}\colon\X\to\PP^1$ the resulting CY-$n$-fold family (with $\M\subseteq R^n f_*\QQ$ as above), the {\bf box extension} --- arising from fiberwise restriction of (roughly) the symbol $\{x_1,\ldots,x_{n+1}\}\in K^M_{n+1}(\bar{\QQ}(\cx))$ --- is $\mathcal{E}^{\vee}_1(1)$.

\subsection*{Inhomogeneous equations}
For any $\ell\in\ZZ_{>0}$, let $V^{[\ell]}(t)$ denote the unique solution to $L(\cdot)=-t^{\ell}$ analytic on $\mathbb{D}$; then $\kappa(\ell)=\ell^{n+1}V^{[\ell]}(0)$.  Each embedding of a Tate object $\QQ(-a)\hookrightarrow \IH^1(\PP^1\setminus \{\infty \},\MM)$ produces an admissible extension $0\to \M\to \mathcal{V}_{\mu}\to \QQ(-a)\to 0$ with higher normal function $V_{\mu}(t)$ of this type for $\ell\leq d$.  

\underline{Key Example 2}: if $d=2$, then $\IH^1(\PP^1\setminus\{0\},\MM)\cong \QQ(-a)$ for some $\tfrac{n+1}{2}\leq a\leq n+1$, and the resulting {\bf higher normal function} $V_{\mu}$ satisfies $LV_{\mu}=-\beta t$ for some $\beta \in \CC^{\times}$, and $\kappa(1)=\beta^{-1}V_{\mu}(0)$.  Of course, $\M$ usually arises from a family $\X$ defined over $\bar{\QQ}$, and then $\beta\in\bar{\QQ}$.

\subsection*{Summary}
We record the basic properties of the kappa series, which is really a \emph{meromorphic function} on $\CC$ with poles at negative integers:

(1) At $s=-k\in\ZZ_{\leq 0}$, the leading term in the Laurent expansion of $\kappa(s)$ is $\kappa^*(-k)=(-k)^{n+1}a_k$.  Here $\{a_k\}$ are the coefficients of the unique holomorphic period of $\M$ on $\Delta_0$; in Key Example 1, $a_k=[\varphi^k]_{\uo}$ are constants in the powers of the Laurent polynomial.

(2) At $s=0$, the power series coefficients of $\kappa$ (more precisely, of $\kappa^{-1}$) compute the LMHS of $\M$ --- and, more generally, of $\mathcal{E}_m$ --- at $t=0$.  These are the numbers arising in \cite{GZ}. In Key Example 1 (with $\varphi$ the Minkowski polynomial mirror to a Fano $X^{\circ}$), by the Gamma Conjecture they should match the coefficients of powers of $c_1$ in the regularized $\hat{\Gamma}$-class of $X^{\circ}$ --- and, more generally, of its ``progenitors'' (see \cite{Go2}).  (In the case of $\mathcal{E}_1$, $\kappa_{n+1}$ is related to the LMHS of the box extension at $t=0$; but this is \emph{not} the special value of the corresponding higher normal function, which blows up at $0$ in any normalization --- the \emph{extension of VMHS} cannot be specialized there.)

(3) At $s=k\in \ZZ_{>0}$, the values $\kappa(k)$ reflect the value at $0$ of the unique solution to the inhomogeneous equation $L(\cdot)=-t^{k}$ analytic on the big disk $\mathbb{D}$.  When certain hypotheses are satisfied,\footnote{namely, that $\IH^1(\mathbb{A}^1,\MM)$ be split Hodge-Tate (or at least have ``enough'' Hodge classes), as well as the Beilinson-Hodge Conjecture for the family $\X$ underlying $\M$.} for small values of $k$ these will be special values of higher normal functions arising from motivic cohomology classes on $\cx\setminus X_{\infty}$.  These are the numbers that arise in \cite{Go}, and are expected to be the correct B-model interpretation of Ap\'ery constants of homogeneous varieties tabulated in \cite{Ga}.  Moreover, they are the numbers which arise in the ``spirit of Ap\'ery'' (in taking a linear combination of two exponentially increasing solutions to a recurrence that then dies exponentially).

In light of (2) and (3), it seems reasonable to call the $\{\kappa(k)\}$ Ap\'ery numbers and the $\{\kappa_j\}$ Frobenius numbers.  These are global arithmetic invariants of the VHS $\M$.\vspace{2mm}

\noindent Some mundane notational conventions:  we write $\bm{\delta}_{ij}$ for the Kronecker delta, $\ay:=\sqrt{-1}$, $\el(t):=\tfrac{\log(t)}{2\pi\ay}$, and $D:=t\tfrac{d}{dt}$.

\begin{thx}
The author thanks S. Bloch, V. Golyshev, and A. Klemm for many helpful discussions.  This work was partially supported by Simons Collaboration Grant 634268.
\end{thx}

\section{Periods of connections}\label{S1}
Fix a coordinate $t$ on $\PP^1$.  We work in the setting of algebraic connections on $U:=\PP^1 \setminus \Sigma$, where $\Sigma$ is a set of at least three points including $0$ and $\infty$.  That is, one has a differential operator of the form $$L=\sum_{j=0}^d t^j P_j(D) = \sum_{i=0}^r q_{r-i}(t) D^i \in \CC[t,D]\mspace{50mu}(\gcd(\{q_{\ell}\})=1),$$ of degree $d$ and order $r$, with singularities only in $\Sigma$, and accompanying $\D:=\D_{\PP^1}$-module $\D/\D L$ on $\PP^1$ with solution sheaf $\mathrm{Hom}_{\D^{\text{an}}}(\D/\D L,\co^{\text{an}}_{\PP^1})$.  Its restriction to $U$ is a \emph{connection} $(\M,\nabla\colon \M\to \M\otimes \Omega^1_U)$ with underlying local system $\MM_{\CC}:=\ker(\nabla^{\text{an}})\subset\M^{\text{an}}$, and solution sheaf $\mathrm{Sol}(\M):=\mathrm{Hom}_{\D^{\text{an}}_U}(\M,\co^{\text{an}}_U)\cong\MM_{\CC}^{\vee}$.  

Write $\mu\in H^0(U,\M)$ for the image of $1\in \D/\D L$, so that $\nabla_L \mu =0$. Local analytic sections $\ve$ of $\MM_{\CC}^{\vee}$ may be paired with $\mu$ to yield \emph{periods} $\langle\ve,\mu\rangle$, which are local sections of $\co_U^{\text{an}}$ satisfying $D\langle\ve,\mu\rangle=\langle\ve,\nabla_D \mu\rangle$ hence $L\langle\ve,\mu\rangle=\langle\ve,\nabla_L\mu\rangle=0$.  On a simply connected subset $\mathcal{S}\subset U^{\text{an}}$, each such period is simply the image of $1\in \D/\D L$ under $\ve$ regarded as an element of $\mathrm{Hom}_{\D_\mathcal{S}}(\D/\D L,\co_{\mathcal{S}})$.

In our setup, the connection is \emph{regular} at $0$ if $\ord_0(q_{\ell})\geq \ord_0(q_0)$ for each $\ell$, at $\sigma\in\Sigma^{\times}:=\Sigma\setminus\{0,\infty\}$ if $\ord_{\sigma}(q_{\ell})\geq \ord_{\sigma}(q_0)-\ell$ for each $\ell$, and at $\infty$ if $\deg(q_{\ell})\leq \deg(q_0)$ for each $\ell$.

\begin{example}\label{E1.1}
Let $\X$ be a smooth projective $(n{+}1)$-fold, $f\colon \X\to \PP^1$ a proper morphism whose restriction $f_U\colon \X_U:=f^{-1}(U)\to U$ is smooth, and consider the exact sequence of complexes $$0\to f_U^* \Omega^1_U \otimes \Omega^{\bullet}_{\X_U/U}[1] \to \Omega_{\X_U}^{\bullet} \to \Omega^{\bullet}_{\X_U/U}\to 0.$$  Applying $\RR^k (f_U)_*$ to its terms yields a long exact sequence in which the (everywhere regular) \emph{Gauss-Manin connection} appears as a connecting homomorphism:  writing $\M:=\RR^n (f_U)_* \Omega_{\X_U/U}^{\bullet}$, we obtain $$\M\overset{\nabla}{\to}\RR^{n+1}(f_U^*\Omega^1_U\otimes\Omega^{\bullet}_{\X_U/U}[1])\cong \Omega^1_U\otimes\RR^n\Omega^{\bullet}_{\X_U/U}=\Omega_U^1\otimes \M.$$ Viewed in the analytic topology, $\nabla$ annihilates $\MM_{\KK}:= R^n (f_U)_*\KK_{\X_U^{\text{an}}}$ for any subring $\KK\subseteq\CC$.  The solution sheaf $\mathrm{Sol}(\M)$ identifies with the local system $\{H_n(X_t,\CC)\}_{t\in U}$.

Without loss of generality, we may assume that $\M$ is irreducible cyclic, so that for some $\mu \in H^0(U,\M)$, $\M$ is generated as an $\co_U$-module by\footnote{In the sequel we make no explicit assumption about $\mu$ generating $\M$ in this strong sense, though in the setting imposed in $\S$\ref{S3}\textit{ff}, it will always generate $\M$ on a smaller Zariski open.} $\mu,\, \nabla_D \mu,\, \nabla^2_D \mu,\,\ldots,\,\nabla^{r-1}_D\mu$. So there exists $L \in \co(U)[D]$, which we may normalize as above, with $\nabla_L \mu =0$.  Local analytic sections $\ve$ of $\MM_{\KK}^{\vee}$ may be paired with $\mu$ to yield $\KK$\emph{-periods} $\langle\ve,\mu\rangle$, refining the ($\CC$-)periods above.
\end{example}

Fix a base point $p\in U(\CC)\cap \RR_{>0}$ near $0$, and a point $\tilde{p}\in\widetilde{U^{\text{an}}}$ above $p$ on the universal cover $\mathpzc{P}\colon\widetilde{U^{\text{an}}}\twoheadrightarrow U^{\text{an}}$.  Also fix paths $\gamma_{\sigma}$ in $U^{\text{an}}$ based at $p$ and winding once counterclockwise about each $\sigma\in \Sigma\setminus\{\infty\}$.  Write $T_{\sigma}$ for the action of monodromy (parallel transport along $\gamma_{\sigma}$) on the stalks $\MM_p$ and $\MM_p^{\vee}$.  In dual bases the matrices of these actions will be transpose-inverse to one another.

\begin{example}\label{E1.2}
Suppose that $\M$ has a regular singularity at $0$, and that $\rk(\MM_p^{T_0})=1$.  Normalizing $\mu$ (and replacing $L$ accordingly), we may assume that the unique invariant period in a neighborhood of $0$ takes the form $A(t)=1+\sum_{k\geq 1}a_k t^k$.  (These two assumptions imply that $P_0(D)=D^r$.) A first motivation for the construction of Bloch-Vlasenko $\Gamma$-functions is:  can we \emph{interpolate} the $\{a_k\}$, i.e. produce an entire function with $F(-m)=a_m$ for all $m\in \ZZ_{>0}$?

If $L=D+t$ then the period is $e^{-t}=\sum_{k\geq 0}\tfrac{(-1)^k}{k!} t^k$, and the sort of function we are after is $$F(s):=\frac{e^{2\pi\ay s}-1}{2\pi\ay}\Gamma(s),\;\;\text{where}\;\; \Gamma(s)=\int_0^{\infty}e^{-t}t^s\frac{dt}{t}.$$ Since $\Gamma(s)\sim\tfrac{(-1)^m}{m!(s+m)}$ for $s\sim -m$, and $\tfrac{e^{2\pi\ay s}-1}{2\pi\ay}\sim s+m$, we get $F(-m)=\tfrac{(-1)^m}{m!}$.  The Bloch-Vlasenko $\Gamma$ in this case would be $(e^{2\pi\ay s}-1)\Gamma(s)$, see Example \ref{E2.1}.
\end{example}

Henceforth (with the exception of Example \ref{E2.1}) we shall \emph{assume that $\M$ has regular singularities}.  Choose a section $\mm\in H^0(U,\M)$, not necessarily the section $\mu$ annihilated by $L$.  For each $\ve\in\MM_{\KK,p}^{\vee}$, by $\langle \ve,\mm\rangle$ we shall mean the holomorphic function on $\widetilde{U^{\text{an}}}$ (or multivalued function on $U^{\text{an}}$) obtained by pairing $\mm$ with the section of $\mathpzc{P}^{-1}(\MM^{\vee}_{\KK})$ extending $\ve$ from $\tilde{p}$.  Let $C_{\bullet}(\widetilde{U^{\text{an}}};\KK)$ be the complex of topological chains on the universal cover; then $$\xi=[\sum_j \gamma_j \otimes \ve_j] \in H_1(U^{\text{an}},\MM_{\KK}^{\vee}):= H_1(C_{\bullet}(\widetilde{U^{\text{an}}};\KK)\otimes_{\KK[\pi_1 (U^{\text{an}},p)]}\MM_{\KK,p}^{\vee})$$ is paired with $\omega =\mm\otimes \tfrac{dt}{t}\in H^1_{\text{dR}}(U,\M)$ by $$\langle\xi,\omega\rangle :=\sum_j \int_{\gamma_j^{-1}}\langle \ve_j,\mm\rangle \frac{dt}{t}.$$  This is called a \emph{period of the connection} $\M$.

\begin{rem}\label{R1.1}
(i) The $H_1$ above also identifies with group homology $H_1(\pi_1(U^{\text{an}},p),\MM_{\KK,p}^{\vee})$, computed by the complex $\mathcal{C}_2\to\mathcal{C}_1\to\mathcal{C}_0$, where $\mathcal{C}_0:=\MM_{\KK,p}^{\vee}$ and (for $n=1,2$) $$\mathcal{C}_n :=\{\text{free abelian group on symbols }[g_1,\ldots,g_n]\}\otimes \MM_{\KK,p}^{\vee}.$$ The differential is given by $\partial([\gamma_1,\gamma_2]\otimes \ve)=[\gamma_2]\otimes \gamma_1^{-1}\ve-[\gamma_1\gamma_2]\otimes \ve+[\gamma_1]\otimes\ve$ and $\partial([\gamma]\otimes\ve)=\gamma^{-1}\ve-\ve$, which reflects the multivaluedness of the sections of $\MM^{\vee}_{\KK}$.

(ii) The pairing is well-defined: if $\xi\in\partial\mathcal{C}_2$ holds or $\omega$ is a dR-coboundary, then $\langle\xi,\omega\rangle=0$.  In the first case, this follows from $$\int_{(\gamma_1\gamma_2)^{-1}}\langle \ve,\omega\rangle\frac{dt}{t}=\int_{\gamma_2^{-1}\gamma_1^{-1}}\langle \ve,\omega\rangle = \int_{\gamma_1^{-1}}\langle \ve,\omega\rangle\frac{dt}{t}+\int_{\gamma_2^{-1}}\langle \gamma_1^{-1}\ve,\omega\rangle\frac{dt}{t},$$ which holds because $\gamma_1^{-1}$ has acted on $\ve$ before we start along $\gamma_2^{-1}$.  For the second, if $\omega=\nabla\eta=\nabla_D\eta\otimes\tfrac{dt}{t}$ then
\begin{align*}
\langle\xi,\omega\rangle &= \sum_j\int_{\gamma_j^{-1}}\langle \ve_j,\nabla_D\eta\rangle \frac{dt}{t} = \sum_j\int_{\gamma_j^{-1}}D\langle\ve_j,\eta\rangle \frac{dt}{t}\\
&=\langle \sum_j (\gamma_j^{-1}\ve_j - \ve_j),\eta\rangle = \langle 0,\eta\rangle=0
\end{align*}
by the Fundamental Theorem of Calculus.
\end{rem}

\section{Gamma functions and interpolation}\label{S2}

Consider the rank-1 connection on $\co_U$ with $\nabla_D 1:=s$, so that the differential operator is $D-s$ and the period is $t^s$.  By abuse of notation we write this connection as ``$t^s$'', and set $\M(s):=\M\otimes t^s$.  The action of $\pi_1(U^{\text{an}},p)$ on its stalk $\MM(s)_{\KK,p}^{\vee}=\MM_{\KK,p}^{\vee}\otimes_{\KK} \KK[e^{\pm2\pi\ay s}]$ is the tensor product of the monodromy representation for $\MM^{\vee}_{\KK}$ with the monodromy of $t^s=e^{s\log t}$ on $\CC^*$.  (We take $1\in \KK[e^{\pm 2\pi\ay s}]$ to correspond to the branch with $\log(p)\in \RR$.)  Our interest lies in certain periods of this ``Mellin-transformed'' connection:

\begin{defn}\label{D2.1}
Given $\mm\in \M(U)$ and $\xi = [\sum_j \gamma_j \otimes \ve_j \otimes e^{2\pi\ay n_j s}] \in H_1(U^{\text{an}},\MM(s)_{\KK}^{\vee})$, with $n_j\in\ZZ$, the associated Bloch-Vlasenko \emph{Gamma function} is $$\Gamma_{\xi,\mm}(s):=\sum_j e^{2\pi\ay n_j s}\int_{\gamma_j^{-1}}\langle \ve_j,\mm\rangle t^s \frac{dt}{t}.$$ It is called \emph{motivic} if $\M$ arises as in Example \ref{E1.1}.
\end{defn}

\begin{rem}\label{R2.1}
(i) This function is entire: $\cup|\gamma_j|$ avoids singularities of the integrand, which is thus uniformly bounded for $s$ in any compact set.

(ii) Given $\mm$, $\Gamma_{\xi,\mm}$ depends only on $\xi$ (and not its representative) by Remark \ref{R1.1}(ii) applied to $\M(s)$, with $\omega=\mm\otimes 1\otimes \tfrac{dt}{t}$.  Hence the set of all Gamma functions for $(\M,\mm)$ is an \emph{image} of $H_1(U,\MM(s)_{\KK}^{\vee})$, and is finitely generated as a $\KK[e^{\pm2\pi\ay s}]$-module.  
\end{rem}

Recall that $\mu$ is the section of $\M$ annihilated by $L=\sum_{k=0}^d t^k P_k(D)$.

\begin{thm}\label{T2.1}
The Gamma functions for $(\M,\mu)$ satisfy the difference equation $$\sum_{k=0}^d P_k(-s-k)\Gamma_{\xi,\mu}(s+k)=0.$$
\end{thm}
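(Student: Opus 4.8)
The plan is to recognize the left-hand side of the difference equation as the period pairing of the cycle $\xi$ against an explicit de Rham \emph{coboundary} for the Mellin-twisted connection $\M(s)$, and then to invoke the well-definedness of that pairing (Remark \ref{R1.1}(ii), applied to $\M(s)$ as in Remark \ref{R2.1}(ii)) to conclude that it vanishes.

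The first step is bookkeeping. Because $\xi=[\sum_j\gamma_j\otimes\ve_j\otimes e^{2\pi\ay n_j s}]$ has all $n_j\in\ZZ$, for every $k\in\ZZ$ we have $e^{2\pi\ay n_j(s+k)}=e^{2\pi\ay n_j s}$; combined with $\langle\ve_j,t^k\mu\rangle=t^k\langle\ve_j,\mu\rangle$ this gives
\[
\Gamma_{\xi,\mu}(s+k)=\sum_j e^{2\pi\ay n_j s}\int_{\gamma_j^{-1}}\langle\ve_j,\mu\rangle\,t^{s+k}\,\tfrac{dt}{t}=\Gamma_{\xi,t^k\mu}(s),
\]
so $\Gamma_{\xi,\mu}(s+k)$ is the period pairing of $\xi$ with $(t^k\mu)\otimes 1\otimes\tfrac{dt}{t}\in H^1_{\mathrm{dR}}(U,\M(s))$. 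Since the $P_k(-s-k)$ are scalars (polynomial in the parameter $s$, independent of $t$) they pass through the pairing, so the left-hand side of the asserted identity equals $\langle\xi,\Omega\rangle$ with $\Omega:=\left(\sum_{k=0}^d P_k(-s-k)\,t^k\mu\right)\otimes 1\otimes\tfrac{dt}{t}$.

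The heart of the matter is to show that $\Omega$ is exact in $H^1_{\mathrm{dR}}(U,\M(s))$. I would begin from the commutation identity $t^k P_k(D)=P_k(D-k)\,t^k$ in $\CC[t,D]$ (obtained by iterating $tD=(D-1)t$), which rewrites $\nabla_L\mu=0$ as $\sum_{k=0}^d P_k(\nabla_D-k)(t^k\mu)=0$ in $\M(U)$. Factoring $P_k(x)-P_k(y)=(x-y)Q_k(x,y)$ with $Q_k\in\CC[x,y]$ and specializing to the commuting pair $x=\nabla_D-k$, $y=-s-k$ gives $P_k(\nabla_D-k)-P_k(-s-k)=(\nabla_D+s)\,Q_k(\nabla_D-k,-s-k)$; summing over $k$ and using the displayed vanishing yields
\[
\sum_{k=0}^d P_k(-s-k)\,t^k\mu=(\nabla_D+s)\,\eta',\qquad \eta':=-\sum_{k=0}^d Q_k(\nabla_D-k,-s-k)(t^k\mu)\in\M(U).
\]
Under the identification of $\co_U$-modules $\M(s)\cong\M$ (the twisting factor $t^s$ has trivial underlying module), $\nabla_D$ on $\M(s)$ corresponds to $\nabla_D+s$ on $\M$; hence $\eta:=\eta'\otimes 1\in\M(s)(U)$ satisfies $\nabla^{\M(s)}\eta=\Omega$, so $\Omega$ is a coboundary and $\langle\xi,\Omega\rangle=0$. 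This is exactly the difference equation. (Formally, for $L=D+t$ it reads $\Gamma_{\xi,\mu}(s+1)=s\,\Gamma_{\xi,\mu}(s)$, consistent with the Bloch--Vlasenko $\Gamma=(e^{2\pi\ay s}-1)\Gamma(s)$ of Example \ref{E1.2}.)

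The computation is essentially formal once the commutation identity and the factorization $P_k(x)-P_k(y)=(x-y)Q_k(x,y)$ are in hand; the only points needing care --- and the closest thing to an obstacle --- are keeping all operator manipulations inside $\M(U)$, so that $\Omega$ is a genuine (not merely formal) de Rham coboundary (one uses that $\mu$, the $t^k\mu$, and $\nabla_D$ all preserve $\M(U)$), and checking that the $s$-polynomial scalars $P_k(-s-k)$ and $Q_k(\cdot,-s-k)$ legitimately factor through the period pairing. I expect no deeper difficulty.
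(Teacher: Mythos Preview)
Your proof is correct and follows essentially the same approach as the paper: both rest on the shift identity $\Gamma_{\xi,t^k\mu}(s)=\Gamma_{\xi,\mu}(s+k)$ together with the fact that $(\nabla_D+s)\eta\otimes\tfrac{dt}{t}$ is a de Rham coboundary for $\M(s)$ (equivalently, the FTC applied to $\partial\xi=0$). The only difference is packaging: the paper distills the coboundary argument into the operational rule $\Gamma_{\xi,\nabla_D\mm}(s)=-s\,\Gamma_{\xi,\mm}(s)$ and iterates it to get $\Gamma_{\xi,P_j(\nabla_D)\mu}(s+j)=P_j(-s-j)\Gamma_{\xi,\mu}(s+j)$ directly, whereas you build the explicit primitive via the factorization $P_k(x)-P_k(y)=(x-y)Q_k(x,y)$ --- an equivalent but slightly more elaborate bookkeeping.
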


\begin{proof}
Applying the FTC to $0=\partial\xi=\sum_j e^{2\pi\ay n_j s}(\gamma_j^{-1}-1)(\ve_j \otimes 1)$ yields $0=\sum_j e^{2\pi\ay n_j s}\int_{\gamma_j^{-1}} D(\langle\ve_j,\mu\rangle t^s)\tfrac{dt}{t} = \Gamma_{\xi,\nabla_D\mu}(s)+s\Gamma_{\xi,\mu}(s).$ Moreover, $\Gamma_{\xi,t\mu}(s)=\Gamma_{\xi,\mu}(s+1)$ is evident from the definition. So $\sum_j t^j P_j(\nabla_D)\mu=0$ gives $0=\Gamma_{\xi,0}(s)=\sum_j \Gamma_{\xi,t^j P_j(\nabla_D)\mu}(s)=\sum_j \Gamma_{\xi,P_j(\nabla_D)\mu}(s+j)=\sum_j P_j(-s-j)\Gamma_{\xi,\mu}(s+j)$.
\end{proof}

\begin{rem}[Recurrence relations] \label{R2.2}
In the setting of Example \ref{E1.2}, we have 
\begin{align*}
0&= L A(t) = \sum_{k=0}^d t^k P_k(D)\sum_{m\geq 0}a_m t^m= \sum_{k=0}^d \sum_{m\geq 0} P_k(m) a_m t^{m+k} \\ &= \sum_{m\geq 0}\left( \sum_{k=0}^d P_k (m-k) a_{m-k}\right) t^m
\end{align*}
hence $\sum_{k=0}^d P_k(m-k)a_{m-k}=0$ for all $m$, which determines $a_m$ from $\{a_{m-k}\}_{k=1}^{\min\{m,d\}}$.  Setting $s=-m$ in Theorem \ref{T2.1}, we have $$\sum_{k=0}^d P_k(m-k)\Gamma_{\xi,\mu}(-m+k)=0.$$ So if we assume $\Gamma_{\xi,\mu}(0)=2\pi\ay$, and $\Gamma_{\xi,\mu}(\ell)=0$ for $\ell\in\ZZ_{>0}$, then $\Gamma_{\xi,\mu}(-m)=2\pi\ay a_m$. As we shall see, in the confluence of the settings of Examples \ref{E1.1} and \ref{E1.2}, these formulas will turn out to be true up to a nonzero rational factor.  Therefore, the Bloch-Vlasenko $\Gamma$-function \emph{interpolates} the power-series coefficients $\{a_m\}$.
\end{rem}

To conclude with the ``simplest example'', we have to break the rule about regular singularities.

\begin{example}\label{E2.1}
Let $\M$ be the connection on $\co_{\mathbb{G}_m}$ with $\nabla_D 1=-t$.  The differential operator is $D+t$, its period $e^{-t}$ ($=\langle \ve,1\rangle$ for a section $\ve$ of $\MM^{\vee}$).  Consider the path $\gamma$ which runs from $\infty$ to $\e>0$ along $\RR_{>0}$, once counterclockwise around $0$, then back to $\infty$ along $\RR_{>0}$.  Due to the subpolynomial decay of $e^{-t}$ at $\infty$,  $\xi = \gamma\otimes \ve\otimes 1$ is a ``rapid decay cycle'' in $H_1^{\text{RD}}(\CC^*,\MM(s)^{\vee})$ (see \cite{BE}), so that $$\Gamma_{\xi,1}(s)=\int_{\gamma}\langle \ve,1\rangle t^s \frac{dt}{t}=(e^{2\pi\ay s}-1)\int_0^{\infty} e^{-t} t^{s-1}dt = (e^{2\pi\ay s}-1)\Gamma(s)$$ as advertised.  But this is ``ur-Gamma'' is not a \emph{motivic} Gamma!
\end{example}

\section{Conifold monodromy}\label{S3}

\textbf{Henceforth we work in the following setting}, which is motivated and typified by the simplest $\D$-modules arising from Landau-Ginzburg models:

\begin{itemize}[leftmargin=1cm]
\item $(\M,\nabla)$ is \emph{motivic}, which is to say that it underlies a sub-$\QQ$-PVHS of an $\RR^n (f_U)_* \Omega_{\X_U/U}^{\bullet}$ (defined as in Example \ref{E1.1}).  This implies:
  \begin{itemize}
  \item $\M$ has regular singularities;
  \item fiberwise $\QQ$-Betti cohomology provides a $\QQ$-local system $\MM_{\QQ}$ underlying $\MM$, whose monodromies $T_{\sigma}=T_{\sigma}^{\text{ss}}e^{N_{\sigma}}$ are thus defined over $\QQ$;
  \item fiberwise integration yields a polarization $Q(\cdot,\cdot)\colon \M\times\M\to \co$ sending $\MM_{\QQ}\times\MM_{\QQ}\to\QQ$;\footnote{That is, $Q$ is a morphism of VHS \emph{of weight} $-2n$.  The induced isomorphism $Q(\cdot)\colon \M\to\M^{\vee}$ defined by $Q(a,b)=\langle Q(a),b\rangle$ sends $\MM_{\QQ}\to\MM_{\QQ}^{\vee}$; and the polarization on $\M^{\vee}$ defined by $Q(a,b):=\langle a,Q^{-1}(b)\rangle$ restricts to the intersection form on $\QQ$-Betti homology $\MM_{\QQ}^{\vee}$.  (The ``missing'' $(2\pi\ay)^n$ twist will eventually show up.)} and
  \item $\M$ has a varying Hodge flag $\fb$, with $\nabla \fb \subset \F^{\bullet-1}\otimes \Omega^1_U$, satisfying the Hodge-Riemann relations.
  \end{itemize}
We will use $\M$ also to denote this PVHS in what follows.
\item $\M$ is \emph{principal}: the $\gr_{\F}^p \M$ are all of rank $1$ for $p=0,1,\ldots,n$, so that the rank of $\M$ and order of $L$ are related by $r=n+1$.
\item $\M$ has \emph{maximal unipotent monodromy} at $t=0$:  $\rk(\MM^{T_0})=1$.  Accordingly, fixing $\ve\in(\MM_{\QQ,p}^{\vee})^{T_0}$ once and for all, there exists a basis $\ve_0,\ve_1,\ldots,\ve_n$ of $\MM_{\QQ,p}^{\vee}$ with $N_0\ve_i=\ve_{i-1}$.  Though this basis is not unique, $\mathsf{Q}_0:=Q(\ve_0,\ve_n)\in \QQ^{\times}$ is independent of the choice (which in any case we will specify below).
\item There is a ``minimal'' $c\in\Sigma^{\times}$, with $|c|<|\sigma|$ for all other $\sigma\in\Sigma^{\times}$; and $\M$ has \emph{conifold monodromy} at $t=c$:  $\rk(T_c - I)=1$.  That is, there exists $\delta\in \MM_{\QQ,p}^{\vee}$ such that:
  \begin{itemize}
  \item the linear span $\langle \delta\rangle = \mathrm{im}(T_c - I)\MM_{\QQ,p}^{\vee}$;
  \item for $n$ odd, $T_c$ is a symplectic transvection, sending $\delta\mapsto\delta$ and some $\beta\mapsto\beta+\delta$;
  \item for $n$ even, $T_c\colon \delta\mapsto -\delta$ is an orthogonal reflection; and
  \item $\ve\in\MM_{\QQ,p}^{\vee}$ is invariant under $T_c$ if and only if $Q(\ve,\delta)=0$.
  \end{itemize}
\item Finally, assume that $T_c \ve_0 \neq \ve_0$.  We may then rescale $\delta$ so that $(T_c-I)\ve_0=\delta$, and set $\mathsf{Q}_c:=Q(\ve_0,\delta)\neq 0$.
\end{itemize}

\noindent Writing $T_{\sigma}=T_{\sigma}^{\text{un}} T_{\sigma}^{\text{ss}}$ for the Jordan decomposition and $N_{\sigma}:=\log(T_{\sigma}^{\text{un}})$ for the monodromy logarithms, the assumptions just made imply $T_0=e^{N_0}$ and $N_0 \ve_0=0$, as well as:
\begin{lem}\label{L3.1}
$\delta$ generates $\MM_p^{\vee}$ under $N_0$.
\end{lem}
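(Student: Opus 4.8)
The plan is to reduce the statement to showing $\delta\notin\mathrm{im}(N_0)$. Since the relations $N_0\ve_i=\ve_{i-1}$ (with $N_0\ve_0=0$) exhibit $N_0$ as a single nilpotent Jordan block of size $n+1$ on $\MM_p^{\vee}$ — which has rank $n+1$ by principality — we have $\mathrm{im}(N_0)=\ker(N_0^n)=\langle\ve_0,\ldots,\ve_{n-1}\rangle$, and a vector lies outside $\mathrm{im}(N_0)$ if and only if its iterates under $N_0$ form a basis, i.e. it is a cyclic generator. (One could instead try to compute $N_0^n\delta=N_0^nT_c\ve_0$ directly, but the polarization route below is more transparent. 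Structurally, it amounts to the orthogonality $Q(W_a,W_b)=0$ for the relative monodromy weight filtration when $a+b$ is small, but we do not need that machinery.)

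First I would expand $\delta=\sum_{j=0}^n d_j\ve_j$ in the given basis, so that the claim becomes simply $d_n\neq 0$. To extract $d_n$ I would pair against $\ve_0$ and use the polarization. Because $T_0$ preserves $Q$ and is unipotent, $N_0=\log T_0$ is an infinitesimal isometry, $Q(N_0 a,b)=-Q(a,N_0 b)$. Writing $\ve_0=N_0^n\ve_n$ this yields $Q(\ve_0,\ve_j)=(-1)^nQ(\ve_n,N_0^n\ve_j)$, which vanishes for every $j<n$ since $N_0^{j+1}\ve_j=0$ there. Hence only the top term of the pairing survives:
\[
\mathsf{Q}_c=Q(\ve_0,\delta)=\sum_{j=0}^n d_j\,Q(\ve_0,\ve_j)=d_n\,Q(\ve_0,\ve_n)=d_n\,\mathsf{Q}_0.
\]

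Since $\mathsf{Q}_0\in\QQ^{\times}$ and $\mathsf{Q}_c=Q(\ve_0,\delta)\neq 0$ — the latter precisely because $\ve_0$ fails to be $T_c$-invariant, by the last bullet of the standing hypotheses, which is exactly the assumption $T_c\ve_0\neq\ve_0$ — we conclude $d_n=\mathsf{Q}_c/\mathsf{Q}_0\neq 0$. Therefore $\delta\notin\langle\ve_0,\ldots,\ve_{n-1}\rangle=\mathrm{im}(N_0)$, so $N_0^n\delta\neq 0$ and $\{\delta,N_0\delta,\ldots,N_0^n\delta\}$ is a basis of $\MM_p^{\vee}$; that is, $\delta$ generates $\MM_p^{\vee}$ under $N_0$.

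I do not expect any real obstacle: once the polarization is brought in this is a two-line computation. The only points needing care are the infinitesimal-isometry identity for $N_0$ (immediate from $T_0$-invariance of $Q$ together with unipotence of $T_0$) and the bookkeeping $\ve_0=N_0^n\ve_n$ and $N_0^n\ve_j=0$ for $j<n$, both direct consequences of the defining relations $N_0\ve_i=\ve_{i-1}$.
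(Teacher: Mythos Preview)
Your proof is correct and uses essentially the same ingredients as the paper's: the orthogonality $Q(\ve_0,\ve_j)=0$ for $j<n$ (derived from the infinitesimal-isometry property of $N_0$) together with $\mathsf{Q}_c\neq 0$. Your packaging is slightly more direct --- you read off $d_n=\mathsf{Q}_c/\mathsf{Q}_0$ immediately --- whereas the paper argues by contradiction, supposing $\delta\in\langle\ve_0,\ldots,\ve_k\rangle$ with $k<n$ and using $T_c$-invariance of $Q$ to conclude that $\ve_0,\ldots,\ve_{n-k-1}$ would all be $T_c$-invariant; but the underlying mechanism is the same.
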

\begin{proof}
First note that if $i+j<n$, then $n-i>j$ $\implies$ $N^{n-i}\ve_j =0$ $\implies$ $Q(\ve_i,\ve_j)=Q(N^{n-i},\ve_n,\ve_j)=(-1)^{n-1}Q(\ve_n,N^{n-i}\ve_j)=0$.  (In particular, $Q(\ve_0,\ve_k)=0$ for $k<n$; and since $Q$ is nondegenerate, we must then have $Q(\ve_0,\ve_n)\neq 0$ as mentioned in the third bullet above.)

Now suppose that $\delta=\sum_{i\leq k} \mathtt{c}_i\ve_i$, with $k<n$. Then for any $\ve\in \MM_{\QQ,p}^{\vee}$, $Q( \ve,(T_c^{-1}-I)\ve_j)=Q((T_c-I)\ve,\ve_j) = Q(\mathtt{c}_{\ve}\delta,\ve_j)=\sum_{i\leq k}\mathtt{c}_i \mathtt{c}_{\ve} Q(\ve_i,\ve_j)$ is $0$ for all $j<n-k$. Hence $\ve_0,\ldots,\ve_{n-k-1}$ are $T_c$-invariant, which in the case of $\ve_0$ contradicts the last bullet above.
\end{proof}

\noindent Before proceeding, we make some final calibrations to the $\QQ$-Betti homology classes as follows:
\begin{lem}\label{L3.1b}
Given $\ve_0$, there exists a unique choice of $\ve_1,\ldots,\ve_n$ satisfying $N_0\ve_j=\ve_{j-1}$ and $(T_c-I)\ve_j=0$ for $j>0$.
\end{lem}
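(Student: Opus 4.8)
The plan is to build $\ve_1,\ldots,\ve_n$ one at a time, climbing the $N_0$-Jordan chain upward from the given bottom vector $\ve_0$. The mechanism making this work is that the standing hypotheses force $\ker N_0$ to be one-dimensional, spanned by $\ve_0$ (maximal unipotent monodromy at $0$), while $\operatorname{im}(T_c-I)$ is also one-dimensional, spanned by $\delta$, and $(T_c-I)\ve_0=\delta\neq0$; so the single free parameter available when lifting along $N_0$ is exactly what is needed, and no more, to impose $T_c$-invariance.

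For existence I would induct on $j\in\{1,\ldots,n\}$. Suppose $\ve_0,\ldots,\ve_{j-1}$ have been produced with $N_0\ve_i=\ve_{i-1}$ and $(T_c-I)\ve_i=0$ for $1\le i\le j-1$. Since $\ve_{j-1}\in\langle\ve_0,\ldots,\ve_{n-1}\rangle=\operatorname{im}(N_0)$ --- here the hypothesis $j\le n$ is used --- there is some $\tilde\ve_j$ with $N_0\tilde\ve_j=\ve_{j-1}$, any two such differing by an element of $\ker N_0=\langle\ve_0\rangle$. Now $(T_c-I)\tilde\ve_j$ lies in $\operatorname{im}(T_c-I)=\langle\delta\rangle$, say $(T_c-I)\tilde\ve_j=b\delta$; recalling $(T_c-I)\ve_0=\delta$ from the normalization of $\delta$, set $\ve_j:=\tilde\ve_j-b\ve_0$. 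Then $N_0\ve_j=\ve_{j-1}$ (as $N_0\ve_0=0$) while $(T_c-I)\ve_j=b\delta-b\delta=0$, closing the induction; the resulting $\ve_0,\ldots,\ve_n$ automatically form a basis, being a Jordan chain with $\ve_0\ne0$.

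For uniqueness, suppose $(\ve_j)_{j\ge1}$ and $(\ve_j')_{j\ge1}$ both satisfy the conditions, and set $w_j:=\ve_j-\ve_j'$ with $w_0:=0$; then $N_0w_j=w_{j-1}$ and $(T_c-I)w_j=0$ for $j\ge1$. Inducting again, if $w_{j-1}=0$ then $N_0w_j=0$, so $w_j=a\ve_0$ for a scalar $a$, and applying $T_c-I$ gives $a\delta=0$, hence $a=0$ and $w_j=0$. I do not expect a real obstacle here: the whole argument is elementary linear algebra, and the only point needing care is that at each step the obstruction to $T_c$-invariance lands in the line $\langle\delta\rangle$ and is cancelled by a \emph{unique} element of $\ker N_0$. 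Note that this uses only $\operatorname{rk}(T_c-I)=1$ and $T_c\ve_0\ne\ve_0$, and not the polarization.
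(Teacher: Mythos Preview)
Your proof is correct and follows essentially the same inductive approach as the paper: both arguments exploit that $\ker N_0=\langle\ve_0\rangle$ and $\operatorname{im}(T_c-I)=\langle\delta\rangle$ with $(T_c-I)\ve_0=\delta$, and your uniqueness argument is identical to the paper's. The only cosmetic difference is in the existence step---the paper fixes a reference Jordan chain $\vc_\bullet$ at the outset and writes the closed formula $\ve_k=\vc_k-\sum_{j=1}^k\mathsf{d}_j\ve_{k-j}$, whereas you lift one step at a time and correct by a multiple of $\ve_0$; unwinding, these produce the same vectors. One small point: your justification that $\ve_{j-1}\in\operatorname{im}(N_0)$ via ``$\ve_{j-1}\in\langle\ve_0,\ldots,\ve_{n-1}\rangle$'' is notationally awkward since those $\ve_i$ are what you are constructing---cleaner is to note $N_0^{j}\ve_{j-1}=N_0\ve_0=0$ and $\ker(N_0^n)=\operatorname{im}(N_0)$ for a single Jordan block.
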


\begin{proof}
Given initial choices $\vc_1,\ldots,\vc_n$ (and $\vc_0=\ve_0$) satisfying $N_0\vc_j=\vc_{j-1}$, write $(T_c-I)\vc_k =:\mathsf{d}_k \delta$ (with $\mathsf{d}_0=1$), and inductively define $\ve_k:=\vc_k -\sum_{j=1}^k \mathsf{d}_j\ve_{k-j}$ for $k=1,\ldots,n$.  One easily checks the desired properties (by induction).

Suppose $\ve_1 ',\ldots, \ve_k '$ also satisfy the two properties in the statement of the Lemma.  Inductively assuming that $\ve_i ' = \ve_i$ for $i<k$, we have $N_0 (\ve_k ' -\ve_k)=\ve_{k-1}'-\ve_{k-1}=0$ hence $\ve_k ' =\ve_k + a\ve_0$; whence $0=(T_c-I)\ve_k ' =(T_c-I)\ve_k + a(T_c-I)\ve_0 =a\delta$ $\implies $ $a=0$.
\end{proof}

\begin{rem}\label{R3.1}
In the event that the geometry $\X_U\to U$ underlying $\M$ extends over $c$ to a degeneration with smooth total space and nodal singular fiber $X_c$, we will say that $\M$ has \emph{strong} conifold monodromy at $c$.  In this case, there is a conifold vanishing sphere $\delta_0$ with $Q(\delta_0,\delta_0)=(-1)^{\binom{n+1}{2}}(1+(-1)^n)$, which controls the monodromy via the Picard-Lefschetz formula $T_c \ve = \ve - (-1)^{\binom{n}{2}} Q(\ve,\delta_0)\delta_0$.  (We then have $\delta=M\delta_0$ for some $M\in\QQ^{\times}$, and $\mathsf{Q}_c = -(-1)^{\binom{n}{2}}M^2$.) We shall only assume this where indicated, since there are times when one merely has a differential operator in hand.
\end{rem}

Turning to the de Rham structure, let $\M_e$ be the canonical extension of $\M$ to $\PP^1$, whose logarithmic connection $\nabla\colon\M_e \to \M_e\otimes \Omega^1_{\PP^1}\langle \log \Sigma\rangle$ has residues $\mathrm{Res}_{\sigma}(\nabla) = -\tfrac{N_{\sigma}}{2\pi\ay}-{\mathrm{Log}}(T_{\sigma}^{\text{ss}})$ (with $\mathrm{Log}$ the branch of $\tfrac{\log}{2\pi\ay}$ with real part in $[0,1)$).  The extended Hodge sub-bundles $\fb_e$ satisfy $\nabla(\F_e)\subseteq \F^{\bullet-1}_e \otimes \Omega^1_{\PP^1}\langle\log\Sigma\rangle$.  In particular, the line bundle $\F_e^n$ is positive, and so has nonzero holomorphic sections; we \emph{take $\mu\in H^0(\PP^1,\F_e^n)$ to be the unique such section with zeroes only at $\infty$ and normalized so that $\langle\ve_0,\mu\rangle=A(t)=\sum_{m\geq 0}a_m t^m$ has $a_0=1$}. The assumption that $T_c \ve_0\neq \ve_0$ implies that $A(t)$ has monodromy at $c$, and so $\limsup_{m\to \infty}a_m^{1/m}=c^{-1}.$  

Henceforth $L=\sum_{j=0}^d t^j P_j(D) = \sum_{i=0}^r q_{r-i}(t) D^i$ \emph{shall denote the \textup{(}Picard-Fuchs\textup{)} differential operator associated to this} $\mu$, written so that the $\{q_i\}_{i=0}^r$ have no common factor.  That is, $L$ annihilates $\mu$ and all of its periods.  (From this point onward we shall drop $\nabla$ when convenient, writing $D\mu$ etc.)  We shall be interested in the particular $\QQ$-periods
\begin{align*} \e_k(t):&=\langle \ve_k,\mu\rangle \mspace{50mu} (k=0,1,\ldots,n)\\ \psi(t):&=\langle \delta,\mu\rangle , \end{align*} where of course $\e_0(t)=A(t)$.  Recalling that $g(t)\sim h(t)$ at $t=\sigma$ means $\lim_{t\to \sigma}\tfrac{g(t)}{h(t)}=1$, here is what we can say about their asymptotic behavior:

\begin{lem}\label{L3.2}
\textup{(i)} At $t=0$, $\e_k(t) \sim \frac{\log^k(t)}{k! (2\pi\ay)^k}$.

\textup{(ii)} Write $\mathrm{E}_n(z):=z^{\frac{n-1}{2}}$ for $n$ even and $z^{\frac{n-1}{2}}\log(z)$ for $n$ odd.  If $\M$ has strong conifold monodromy, then about $t=c$ we have 
\begin{align*}
\e_0(t) &= C_0(1+\mathcal{O}(t-c))\mathrm{E}_n(t-c)+\text{analytic function} \\ \text{and}\;\;\;\psi(t) &\sim C (t-c)^{\frac{n-1}{2}}
\end{align*} 
for some constants $C_0,C\in \CC^{\times}$, and $\ord_{t=c}(q_0)=1$.
\end{lem}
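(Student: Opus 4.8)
The plan is to treat (i) and (ii) separately, in both cases reading the leading asymptotics of the periods off the local structure of the Deligne canonical extension $\M_e$. For (i): since $T_0=e^{N_0}$ is unipotent and $N_0\ve_j=\ve_{j-1}$, analytic continuation of $\e_k=\langle\ve_k,\mu\rangle$ around $\gamma_0$ replaces it by a unitriangular combination of $\e_0,\dots,\e_k$ alone, with the top power $\el(t)^k$ controlled by $N_0^{\phantom{1}}\ve_k=\ve_{k-1}$; and because $\mu\in H^0(\PP^1,\F^n_e)$ is a section of the canonical extension, each $\e_k$ grows at most like a power of $\log t$ and has no pole at $0$. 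An induction on $k$ then yields $\e_k(t)=\sum_{b=0}^k\tfrac1{b!}\el(t)^b\,\e_{k-b}^{\text{an}}(t)$ with each $\e_j^{\text{an}}$ holomorphic at $0$: after subtracting the previously constructed lower terms one is left with a single-valued function of moderate growth, hence holomorphic by Riemann's removable-singularity theorem, and the monodromy above forces precisely this shape. Since $\e_0^{\text{an}}=A$ with $A(0)=a_0=1$ and $|\el(t)|\to\infty$, the $b=k$ term dominates, which is the assertion $\e_k(t)\sim\tfrac{\log^k t}{k!(2\pi\ay)^k}$.

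For (ii), strong conifold monodromy (Remark \ref{R3.1}) means $X_c$ acquires a single ordinary double point, so the vanishing cohomology at $c$ is one-dimensional, spanned by the vanishing sphere $\delta_0$, and the LMHS of $\M$ at $c$ is essentially forced: for $n$ even $N_c=0$ and $T_c^{\text{ss}}$ acts by $-1$ on $\langle\delta_0\rangle$, while for $n$ odd $T_c$ is unipotent with $N_c$ of rank one, $N_c^2=0$, and the weight filtration $W(N_c)$ has Hodge--Tate graded pieces in weights $n\pm1$, of Hodge level $\tfrac{n\pm1}2$. I would then feed the period map near $c$ into Schmid's nilpotent orbit theorem, writing $\langle\ve,\mu(t)\rangle\sim\langle\ve,(t-c)^{-\mathrm{Res}_c(\nabla)}\mu_\infty\rangle$ with $\mathrm{Res}_c(\nabla)=-\tfrac{N_c}{2\pi\ay}-{\mathrm{Log}}(T_c^{\text{ss}})$ and $\mu_\infty$ the limit of $\mu$ in $\M_e|_c$ --- the unique Hodge vector of $F$-level $n$ in the LMHS.

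Locating $\mu_\infty$ relative to $W(N_c)$ and to the $T_c^{\text{ss}}$-eigenspaces, and using that $\mu$ is a cyclic generator in a neighbourhood of $c$ so that $\mu_\infty,\nabla_D\mu(c),\dots,\nabla_D^j\mu(c)$ span $\F^{n-j}_\infty$, one reads the pairing of $\delta$ against these data off from the known Hodge type of $\delta_0$ and concludes that $\psi(t)=\langle\delta,\mu(t)\rangle$ has leading exponent exactly $\tfrac{n-1}2$ with a nonzero coefficient, i.e. $\psi\sim C(t-c)^{(n-1)/2}$. Since continuing $\e_0$ around $\gamma_c$ adds $\psi$ to it, the multivalued part of $\e_0$ is $\tfrac{\log(t-c)}{2\pi\ay}\psi$ when $n$ is odd (producing $(t-c)^{(n-1)/2}\log(t-c)$) and is already carried by $\psi$ itself when $n$ is even (producing $(t-c)^{(n-1)/2}$); in all cases $\e_0=C_0(1+\mathcal{O}(t-c))\mathrm{E}_n(t-c)+(\text{analytic})$ with $C_0\neq0$. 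Finally, the $n$-dimensional space of $T_c$-invariant periods consists of holomorphic solutions of $L$ whose vanishing orders at $c$ are, by the same Hodge-flag argument, exactly $0,1,\dots,n-1$; adjoining the extra exponent $\tfrac{n-1}2$, these are the $n+1$ roots of the indicial equation of $L$ at $c$, and since $\gcd(\{q_i\})=1$ this pins the leading coefficient down to $\ord_{t=c}(q_0)=1$.

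The step I expect to be hardest is the Hodge-theoretic bookkeeping in (ii): showing that the vanishing cycle $\delta$ pairs nontrivially with the limit of $\mu$ at precisely level $\tfrac{n-1}2$ --- so that $C$, and hence $C_0$, is nonzero and the extra exponent is $\tfrac{n-1}2$ rather than something larger --- together with the companion fact that the invariant periods realize every order $0,1,\dots,n-1$. This is where one must genuinely use the strong conifold hypothesis and the principality of $\M$ (all $h^{p,n-p}=1$), by tracking the Hodge type of $\delta_0$ against the limiting Hodge filtration; granting this, the deduction $\ord_{t=c}(q_0)=1$ from the exponents is a routine computation with the indicial polynomial.
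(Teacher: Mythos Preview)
Your treatment of (i) is fine and amounts to the same thing as the paper's one-line argument, which just iterates the observation that $D\langle\ve_k,\mu\rangle=\langle\ve_k,\nabla_D\mu\rangle$ is asymptotic to $\langle\ve_k,(\mathrm{Res}_0\nabla)\mu\rangle=\tfrac{1}{2\pi\ay}\langle\ve_{k-1},\mu\rangle$.

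For the asymptotics of $\e_0$ and $\psi$ in (ii), your route through the nilpotent orbit theorem and the Hodge type of the vanishing cycle is genuinely different from the paper's.  The paper does not do this bookkeeping at all: it simply invokes the known period exponent $\tfrac{n+1}{2}$ of an ordinary double point from \cite{KL2}, together with the fact that $X_c$ is still $K$-trivial and $\mu_c\in\F^n_{e,c}$ is nonzero, to get the form of $\e_0$; then $\psi=(T_c-I)\e_0$ is immediate.  Your approach is more self-contained but, as you yourself flag, pinning down that the exponent is exactly $\tfrac{n-1}{2}$ (and not larger) is real work---the leading nilpotent-orbit term $\langle\delta,\mu_\infty\rangle$ vanishes by $T_c$-equivariance, so you must go to the next order and control it.

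Where your proposal has a genuine gap is the deduction of $\ord_{t=c}(q_0)=1$.  This is \emph{not} a routine consequence of the indicial polynomial: the list of exponents at a regular singular point does not determine the order of vanishing of the leading coefficient.  For instance, with $n=2$ one can write down a Fuchsian operator $(t-c)^2\partial^3+\tfrac{3}{2}(t-c)\partial^2+a(t)\partial+b(t)$ with $a(c)=0$, $b(c)\neq 0$ (so $\gcd=1$) whose exponents at $c$ are $0,1,\tfrac{1}{2}$, yet $\ord_c(q_0)=2$.  Your auxiliary claim that the $T_c$-invariant periods have vanishing orders exactly $0,1,\dots,n-1$ is also not a consequence of ``the same Hodge-flag argument'': establishing it already requires knowing how $\mu,\partial\mu,\dots$ sit inside $\M_{e,c}$ relative to the $T_c$-eigenspaces, which is precisely the information the paper extracts by other means.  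The paper's method is to write down, from the known form of $\e_0$, an explicit $\co_c$-basis of $\M_{e,c}$ (for $n$ odd: $\mu,\partial\mu,\dots,\partial^{(n-1)/2}\mu,\,w\partial^{(n+1)/2}\mu,\,\partial w\partial^{(n+1)/2}\mu,\dots$; similarly for $n$ even with a $w^{1/2}\partial w^{1/2}$ insertion), and then observe directly that $w\partial^{n+1}\mu$---but not $\partial^{n+1}\mu$---lies in the $\co_c$-span of $\mu,\partial\mu,\dots,\partial^n\mu$.  That is the step you are missing.
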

\begin{proof}
Applying repeatedly that $(2\pi\ay)D\langle \ve_k,\mu\rangle =(2\pi\ay)\langle \ve_k,\nabla_D \mu\rangle$ is asymptotic to $(2\pi\ay)\langle\ve_k,(\mathrm{Res}_0\nabla)\mu\rangle = -\langle \ve_k,N_0\mu\rangle=\langle N_0\ve_k,\mu\rangle =\langle \ve_{k-1},\mu\rangle$ yields (i).  For (ii), the period exponent of a node $x_0^2+\cdots +x_n^2$ is $\tfrac{n+1}{2}$ (see \cite[(4.6-7) and Prop. 4.1]{KL2}), and by the assumptions above $\ve_0$ maps onto the (rank one) vanishing cohomology.   Since $X_c$ is still $K$-trivial, and $\mu_c$ nonvanishing as a section of $\F_{e,c}^n\cong H^0(K_{X_c})$, the period $\e_0=\int_{\ve_0}\mu$ realizes this exponent in [op. cit., (4.6)], yielding the claim about $\e_0$.  For $\psi$, use $(T_c-I)\e_0=\psi$.

Choose a local coordinate $w\sim t-c$ so $\e_0=\mathrm{E}_n(w)+$analytic terms, and write $\d=\tfrac{d}{dw}$.  Then $\M_{e,c}$ is generated by
\begin{align*}
&\mu,\;\d\mu,\;\ldots,\d^{\frac{n-1}{2}}\mu,\;w\d^{\frac{n+1}{2}}\mu,\;\d w\d^{\frac{n+1}{2}}\mu,\ldots,\;\d^{\frac{n-1}{2}}w\d^{\frac{n+1}{2}}\mu\;\;\;\;\;\text{resp.}\\
&\mu,\;\d\mu,\;\ldots,\;\d^{\frac{n}{2}}\mu,\; w^{\frac{1}{2}}\d w^{\frac{1}{2}}\d^{\frac{n}{2}}\mu,\;\d w^{\frac{1}{2}}\d w^{\frac{1}{2}}\d^{\frac{n}{2}}\mu,\ldots,\;\d^{\frac{n}{2}-1}w^{\frac{1}{2}}\d w^{\frac{1}{2}}\d^{\frac{n}{2}}\mu,
\end{align*}
and $\d^{\frac{n+1}{2}}w\d^{\frac{n+1}{2}}\mu$ resp. $\d^{\frac{n}{2}}w^{\frac{1}{2}}\d w^{\frac{1}{2}}\d^{\frac{n}{2}}\mu$ belong to $\M_{e,c}$.  From this one deduces that $w\d^{n+1}\mu$ (and not $\d^{n+1}\mu$) is a $\CC[w]$-linear combination of $\mu,\d\mu,\ldots,\d^n \mu$.
\end{proof}

Here is a basic geometric example invoked repeatedly in \S\S\ref{S7}-\ref{S8}.

\begin{example}\label{E3.1}
Let $\vf\in \CC[x_1^{\pm1},\ldots,x_{n+1}^{\pm 1}]$ be a Laurent polynomial whose Newton polytope $\NP$ is reflexive, i.e. has integral polar polytope.  (In particular, it has a unique integral interior point given by $\uo$.)  We shall call $\vf$ itself \emph{reflexive} if in addition there exists a smooth blowup $\beta\colon\cx\twoheadrightarrow\PP_{\NP}$ on which $\tfrac{1}{\vf}$ extends to a proper morphism $f\colon\cx\to\PP^1$, $X_0=f^{-1}(0)\subset \cx$ is a normal-crossing divisor, and ($\beta^*$ of) $\mathrm{dlog}(\ux):=\tfrac{dx_1}{x_1}\wedge\cdots\wedge\tfrac{dx_{n+1}}{x_{n+1}}$ extends to a nowhere-vanishing section of $\Omega_{\cx}^{n+1}(\log\,X_0)$. Writing $\Sigma$ for the discriminant locus of $f$, $X_t:=f^{-1}(t)$ is a smooth CY $n$-fold for each $t\in \PP^1\setminus \Sigma\,(=:U)$, and is given by a crepant resolution of $\overline{\{1-t\vf(\ux)=0\}}\subset \PP_{\NP}$.  We call $(\cx,f)$ the \emph{compact Landau-Ginzburg model} associated to $\vf$.

Put $\cx_U:=f^{-1}(U)$, so that $(\RR^n {f_U}_*\Omega_{\cx_U/U},\nabla)$ underlies a $\QQ$-VHS $\H^n_f$, and write $\M\subseteq \H_f^n$ for the minimal sub-$\QQ$-VHS containing the line bundle $\F^n:=\F^n\H^n_f$.  If $\M$ satisfies the assumptions of the beginning of this section, we will call $\vf$ \emph{good}. Taking a section $\mu$ of $\F_e^n$ as above with corresponding Picard-Fuchs operator $L$, it is enough to have $L$ of order $n+1$ with unique exponent $0$ at $t=0$\footnote{This condition forces the underlying local system to be rational, since it implies $N_0^n\neq 0$, and a Galois-conjugate system inside $\HH$ could not also have this property (since $h^{n,0}=1$).} and a single integer exponent of multiplicity two or half-integer exponent of multiplicity one (for $n$ odd resp. even) at $t=c$.

In fact, we can identify the section $\mu$ explicitly.  Denoting by $\omega_f:=\omega_{\cx}\otimes f^* \omega_{\PP^1}^{-1}$ the relative dualizing sheaf, by a result of Koll\`ar \cite[Thm 2.6]{Ko} we have $\F_e^n \cong f_*\omega_f$.  Clearly $\tfrac{\mathrm{dlog}(\ux)}{f^*(dt/t)}$ is a section of $\omega_f\cong \omega_{\cx}(\log\, X_0)\otimes f^*\omega_{\PP^1}(\log\,0)^{-1}$ 
vanishing to first order on $X_{\infty}$ and nowhere else. Hence $\mu:=\left[\tfrac{1}{(2\pi\ay)^n}\tfrac{\mathrm{dlog}(\ux)}{df/f}\right] \in H^0(\PP^1,\F^n_e)$ is a section with a simple zero at $\infty$, demonstrating that $\F_e^n\cong \co(1)$.  Moreover, for each $t\in U$ we have $\tfrac{1}{(2\pi\ay)^n}\tfrac{\mathrm{dlog}(\ux)}{1-t\vf}=\tfrac{\mu\wedge df/f}{1-t\vf}=\tfrac{\mu\wedge df}{f-t}=\mu\wedge \mathrm{dlog}(f-t)$ $\implies$
 $$\mu_t=\tfrac{1}{(2\pi\ay)^n}\mathrm{Res}_{X_t}\left(\tfrac{\mathrm{dlog}(\ux)}{1-t\vf(\ux)}\right)\in\Omega^n (X_t).$$ From this one easily shows (e.g. see \cite[(4.1)]{DK}) that $a_m$ is the constant term in $\vf^m$; in particular, $a_0=1$ as desired.

Finally, we can broaden this construction by allowing Laurent polynomials which define families with an automorphism over $t\mapsto e^{\frac{2\pi\ay}{w}}t$ for some $w\in \mathbb{N}$, and which fail to be good only insofar as there are $w$ conifold points of minimal modulus in $\Sigma$.  Replacing $\X$ with its quotient by this automorphism and $t$ by $t^w$, and assuming the new $T_0$ remains unipotent, $\mu$ still produces the desired section.  In the sequel, all constructions and results stated for good reflexive polynomials $\vf$ are also valid in this setting.
\end{example}

\section{Frobenius periods}\label{S4}

Since $\MM$ has maximal unipotent monodromy at $t=0$ and $A(0)\neq 0$, it follows that $L$ has the unique local exponent $0$ there.  The indicial equation $P_0(\mathtt{T})=0$ thus has unique root $\mathtt{T}=0$, and so $P_0(D)=D^r$.

\begin{defn}\label{D4.1}
A \emph{Frobenius deformation} for $L$ at $0$ is a formal series $$\Phi(s,t)=\sum_{m\geq 0}\phi_m(t) s^m,$$ with each $\phi_m$ analytic on a neighborhood of $p$ (and by continuation, on $\widetilde{U^{\text{an}}}$), such that $L\Phi=s^r t^s$ and $T_0\Phi=e^{2\pi\ay s}\Phi$.  We shall call $\phi_0,\ldots,\phi_n$ the \emph{Frobenius periods}, since they satisfy $L(\cdot)=0$.
\end{defn}

\noindent In our setting (as bulleted in $\S$\ref{S3}), $\Phi$ is unique \cite{BV}.

\begin{example}\label{E4.1}
If $L$ has order $3$ ($n=2$), then $$L(\sum_{m\geq 0}\phi_m s^m)=s^3 e^{s\log t} = \sum_{m\geq 3} \tfrac{\log^{m-3}(t)}{(m-3)!}s^m $$ implies $L\phi_0, L\phi_1, L\phi_2 = 0$ (morally, 3 $\CC$-periods of a family of Picard rank $19$ $K3$ surfaces) while $L\phi_3 =1$, $L\phi_4= \log(t)$, $L\phi_5 = \tfrac{\log^2 t}{2!}$, etc.
\end{example}

The monodromy condition $T_0\Phi = e^{2\pi\ay s}\Phi$ forces $t^{-s}\Phi$ to be $T_0$-invariant (after expanding $t^{-s}=e^{-s\log(t)}$ and rearranging in powers of $s$). Since the $\phi_m$ have at worst log poles, the coefficients $\{\phi^{\text{an}}_m\}$ of powers of $s$ in $t^{-s}\Phi$ are thereby analytic in a disk about $t=0$.  Writing $\phi_m(t)=\sum_{k\geq 0}a^{(m)}_k t^k$, we have
\begin{align*}
\Phi(s,t)&=\sum_{\mathtt{m}\geq 0} \phi^{\text{an}}_{\mathtt{m}}(t) t^s s^{\mathtt{m}}=\sum_{m\geq 0}s^m \sum_{\ell\geq 0}\tfrac{\log^{\ell}t}{\ell!}\phi^{\text{an}}_{m-\ell}(t) \\
&= \sum_{j,k,\ell\geq 0} s^{j+\ell}  \tfrac{\log^{\ell}t}{\ell!}a^{(j)}_k t^k = \sum_{k\geq 0} t^k e^{s\log(t)}  \sum_{j\geq 0}a^{(j)}_k s^j =: \sum_{k\geq 0} t^{k+s} A_k(s)
\end{align*}
in which the first line yields $\phi_m(t)=\sum_{\ell=0}^m \tfrac{\log^{\ell}t}{\ell!}\phi^{\text{an}}_{m-\ell}(t)$.  Furthermore, taking $t=0$ in
\begin{align*}
s^r&= t^{-s} L\Phi = t^{-s} (D^r+t(\cdots))\textstyle{\left(\sum_{j\geq 0} a_0^{(m)} s^m + t(\cdots)\right)t^s} \\&= \sum_{j\geq 0} a_0^{(j)} s^{j+r}+t(\cdots)
\end{align*}
gives $1=\sum_{j\geq 0}a_0^{(j)} s^j$, so that $a_0^{(j)}=\bm{\delta}_{0j}$.  Immediate consequences are that $A_0(s)=\sum a_0^{(j)}s^j =1$, and (from $a_0^{(0)}=1$ and uniqueness of the holomorphic period) that $\phi_0(t)=\phi_0^{\text{an}}(t)=\e_0(t)=A(t)$, so that $a_k^{(0)}=a_k$.

\begin{rem}\label{R4.1}
\emph{A priori} the $\{A_k(s)\}_{k>0}$ and $\Phi(s,t)$ are formal in $s$.  However, $L\sum_{k\geq 0}t^{k+s} A_k(s)=s^r t^s$ implies the recurrence $$A_m(s)=-(m+s)^{-r}\,\textstyle{\sum_{j=1}^d} A_{m-j}(s) P_j (m-j+s),$$ where $\deg(P_j)\leq r$ for each $j$ and $A_k(s):=0$ for $k<0$.  This exhibits $A_m(s)$ as a rational function with poles (of order $\leq r$) in $\ZZ\cap [-m,-1]$.  Moreover, the asymptotics of $A_m(s)$ as $m\to \infty$ are governed by the degree $r$ terms of the $\{P_j\}$; these are the coefficients of $q_0(t)$, whose smallest root is nothing but $c$.  One deduces that: for $s$ in any compact subset of $\CC\setminus \ZZ_{<0}$ and $t$ in any disk about $0$ of radius less than $|c|$, the series $\Phi^{\text{an}}:=t^{-s}\Phi =\sum_k A_k(s) t^k$ converges uniformly to an analytic function; and $\Phi^{\text{an}},\Phi$ continue to analytic functions on $\widetilde{U^{\text{an}}}\times (\CC\setminus\ZZ_{<0})$.

We note here for reference the consequences that $\Phi(0,t)=A(t)$ and $\Phi^{\text{an}}(s,0)=1$; from the latter, one has for example that $\Phi(\ell,t)$ is an analytic function vanishing at $t=0$ for each $\ell\in \ZZ_{>0}$.
\end{rem}

\begin{rem}\label{R4.2}
In view of the equality of the $0^{\text{th}}$ Frobenius and $\QQ$-Betti periods $\phi_0(t)=A(t)=\e_0(t)$, it is natural to ask whether the remaining Frobenius periods are $\QQ$-periods.  It turns out that if this were the case, then the limiting mixed Hodge structure (LMHS) of $\M$ at $t=0$ would be $\QQ$-split, \emph{without} even renormalizing $t$!  This is almost never true.

To see the relationship, recall that the LMHS is given by the limiting Hodge flag $\lim_{t\to 0}e^{\el(t)N_0}\fb_t$ written with respect to the $\QQ$-basis $\ve_0^{\vee},\ldots,\ve_n^{\vee}$, together with the weight monodromy filtration $W(N_0)_{2j}=\langle \ve_n^{\vee},\ldots,\ve_{n-j}^{\vee}\rangle$.  But for computing the periods of the LMHS it is better to apply $e^{-\el(t)N_0}$ to the $\QQ$-basis and compare with $\fb_{e,0}$ in the limit.  More precisely, for us the \emph{period matrix of the LMHS}, written $\Omega_{\lim}$, is the change-of-basis matrix between\footnote{Here $(e^{-\el(t)N_0} \ve_j^{\vee})|_{t=0}$ belongs to $W(N_0)_{2(n-j)}\M_{\lim,\QQ}$, and $\nabla_D^j \mu$ to $\M_{\lim}^{n-j,n-j}$; the $(2\pi\ay)^{-j}$ rescaling makes them project to the same element of $\gr^{W(N_0)}_{2(n-j)}\M_{\lim}$.} $\{(2\pi\ay)^{-j} e^{-\el(t)N_0}\ve_j^{\vee} \}_{j=0}^n$ and $\mu,\nabla_D\mu,\ldots,\nabla_D^n \mu$ at $t=0$.  Its $0^{\text{th}}$ column is $$\lim_{t\to 0}\langle (2\pi\ay)^j e^{-\el(t)N_0}\ve_j,\mu\rangle=(2\pi\ay)^j \lim_{t\to 0}\e_j^{\text{an}}(t)=(2\pi\ay)^j \e^{\text{an}}_j(0),$$ where $\e_j^{\text{an}}(t)$ is the ``analytic part'' obtained from $\e_j(t)$ by formally setting $\log(t)$ to zero.  Since $N_0 \ve_j^{\vee}=-\ve_{j+1}^{\vee}$ and $\mathrm{Res}_0(\nabla)=\tfrac{-N_0}{2\pi\ay}$, each column is obtained from the previous one by shifting the entries down, yielding a lower-triangular matrix with ones on the diagonal.

If the $\{\phi_j\}$ were $\QQ$-linear combinations of the $\{\e_j\}$, the $\{\e_j^{\text{an}}\}$ would be $\QQ$-linear combinations of the $\{\phi_j^{\text{an}}\}$. Since $\phi_j^{\text{an}}(0)=\bm{\delta}_{0j}$, all $\e_j^{\text{an}}(0)$ would be rational, and the $(j,j-\ell)^{\text{th}}$ entries of the matrix would belong to $\QQ(\ell)$, making the LMHS $\QQ$-split.
\end{rem}

\section{The kappa series}\label{S5}

We now turn to the analytic continuation of the Frobenius deformation around the conifold point. 
If $\cl =\sum_{i=0}^{\r}\q_{\r-i}(t)D^i$ is a differential operator underlying an algebraic connection, then its adjoint $$\cl^{\dagger}:=(-1)^{\r} \,\textstyle{\sum_{i=0}^{\r}} (-D)^i \q_{\r-i}(t)$$ underlies the dual connection \cite[Lemma 34]{BV}.  (In a slight abuse of notation, we shall write $\mathrm{Sol}_p(\cl)$ for the stalk $\mathrm{Sol}_p(\D/\D\cl)$ below.)  Note that $(\cl^{\dagger})^{\dagger}=\cl$ and $(D\cl)^{\dagger}=\cl^{\dagger} D$.
 
Now remember that $\psi =\langle\delta,\mu\rangle$ denotes the period over the conifold vanishing cycle.  If $\cl$ satisfies $(T_c -I)\mathrm{Sol}_p(\cl)=\CC\psi$, then also $(T_c -I)\mathrm{Sol}_p(\clv)$ has rank one; and since $\mathrm{Sol}_p(\clv D)=\int \mathrm{Sol}_p(\clv)\tfrac{dt}{t}$, $(T_c - I)\mathrm{Sol}_p (\clv D)=\oint_{\gamma_c}\mathrm{Sol}_p(\clv)\tfrac{dt}{t}$ has rank one too.  (That is, all but one function in a basis of $\mathrm{Sol}_p(\clv)$ is analytic at $c$.)  Therefore $(T_c-I)\mathrm{Sol}_p(D\cl)=\CC\psi$.  Applying this argument to get from $\cl=D^{k-1}L$ to $D^k L$, we find that $(T_c -I)\mathrm{Sol}_p(D^k L)=\CC\psi$ for all $k\in\ZZ_{\geq 0}$. But the coefficients $\phi_m$ of $\Phi=\sum_{m\geq 0}\phi_m s^m$ satisfy $D^{\ell}L\phi_m =0$ for $m<\ell+r$, hence $(T_c-I)\phi_m=\kappa_m\psi$ for some $\kappa_m\in \CC$.  (In particular, by the normalization in $\S$\ref{S3}, we have $\kappa_0 =1$.)  So the following makes sense:

\begin{defn}\label{D5.1}
The \emph{kappa series} $\kappa(s)=\sum_{j\geq 0}\kappa_j s^j$ of $L$ is the analytic function on $\CC\setminus \ZZ_{<0}$ given by $$(T_c-I)\Phi(s,t)=:\kappa(s)\psi(t).$$ The coefficients $\{\kappa_j\}$ are called the \emph{Frobenius constants} of $L$.
\end{defn}

\begin{rem}\label{R5.1}
The $\{\kappa_j\}$ were called ``Ap\'ery constants'' in the original version of \cite{BV}.  In our view this terminology is more appropriate for the values $\kappa(\ell)$, $\ell\in\ZZ\cap [1,d-1]$; see Remark \ref{R8.1} and Example \ref{E8.1}.  As this paper was in the finishing stages, the final version of [op. cit.] appeared in which the language of Definition \ref{D5.1} is used.
\end{rem}

The two Theorems that follow address (respectively) interpretation and computation of the Frobenius numbers.  The intervening Lemma gives a useful asymptotic description of the power-series coefficients of periods and related functions.

\begin{thm}\label{T5.1}
The first $n+1$ coefficients of $\kappa(s)^{-1}=:\sum_{i\geq 0}\alpha_i s^i$ yield the LMHS periods of Remark \ref{R4.2}.
\end{thm}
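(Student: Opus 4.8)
The plan is to connect the coefficients $\alpha_i$ of $\kappa(s)^{-1}$ to the analytic parts $\epsilon_j^{\text{an}}(0)$ of the $\QQ$-Betti periods, since the latter form the $0^{\text{th}}$ column of $\Omega_{\lim}$ by Remark \ref{R4.2}. The key is to produce, out of the Frobenius deformation $\Phi$, a holomorphic period of $\M$ and then compare it against the $\QQ$-basis $\{\ve_j\}$. Concretely, I would introduce the ``analytic Frobenius deformation'' $\Phi^{\text{an}}(s,t)=t^{-s}\Phi(s,t)=\sum_{k\geq0}A_k(s)t^k$ from Remark \ref{R4.1}, which is analytic in $t$ near $0$ and in $s$ near $0$, and observe that its value at $t=0$ is $\Phi^{\text{an}}(s,0)=1$. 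The defining relation $(T_c-I)\Phi=\kappa(s)\psi$ together with $(T_c-I)\phi_0 = \psi$ (i.e.\ $\kappa_0=1$, from $\S$\ref{S3}) suggests that $\kappa(s)^{-1}\Phi(s,t)$ is, coefficient-by-coefficient in $s$, a combination of the $\QQ$-periods $\e_0,\ldots,\e_n$ up to order $s^n$ — precisely because $\kappa(s)^{-1}\Phi$ is annihilated by $T_c-I$ through that order, hence lies in the span of $T_c$-invariants, which near $0$ and to the relevant order is spanned by $\e_1,\ldots,\e_n$ (and the invariant-under-$T_c$ combinations), while $\Phi$ itself provides the missing ``$\e_0$-direction'' via $\phi_0=\e_0$.

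**Next I would** make this precise by expanding. Write $\Psi(s,t):=\kappa(s)^{-1}\Phi(s,t)=\sum_{m\geq0}\psi_m(t)s^m$ with $\psi_m=\sum_{i+\ell=m}\alpha_i\phi_\ell$; since $(T_c-I)\Phi = \kappa(s)\psi$, we get $(T_c-I)\Psi = \psi$, so $(T_c-I)\psi_0=\psi$ and $(T_c-I)\psi_m=0$ for $m\geq 1$. Thus $\psi_m$ is $T_c$-invariant for $m\geq1$ and, being a period of $\M$ annihilated by $L$ (for $m<r=n+1$, since $D^0 L\phi_\ell=0$), it lies in the span of $\{\e_1,\ldots,\e_n,\text{analytic }T_c\text{-invariant periods}\}$. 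Meanwhile $\psi_0=\phi_0=\e_0=A(t)$. Matching $t^{-s}$-expansions: $t^{-s}\Psi(s,t)=\kappa(s)^{-1}\Phi^{\text{an}}(s,t)$, whose value at $t=0$ is $\kappa(s)^{-1}$, i.e.\ $\sum_i\alpha_i s^i$. On the other hand, writing each $\psi_m(t)=\sum_{\ell}\tfrac{\log^\ell t}{\ell!}\psi_{m-\ell}^{\text{an}}(t)$ and reading off $\psi_m^{\text{an}}(0)$, one recognizes the generating pattern: the coefficient of $s^m$ in $\kappa(s)^{-1}$ equals $\sum_{i+\ell=m}\alpha_i\phi_\ell^{\text{an}}(0) = \alpha_m$ (using $\phi_\ell^{\text{an}}(0)=\bm{\delta}_{0\ell}$ from $\S$\ref{S4}), which is circular — so the real content is to identify $\psi_m^{\text{an}}(0)$ on the $\QQ$-Betti side. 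Here I use that $\Psi$ is characterized by $L\Psi=\kappa(s)^{-1}s^r t^s$ and $(T_c-I)\Psi=\psi$ plus the $T_0$-equivariance inherited from $\Phi$; comparing with the unique period combination having the same monodromy data forces $\psi_m = \sum_{j=0}^{n}c_{m,j}\e_j$ with $c_{m,0}=\bm{\delta}_{0m}$ for $m\leq n$, whence $\psi_m^{\text{an}}(0)=\sum_j c_{m,j}\e_j^{\text{an}}(0)$; evaluating the $(2\pi\ay)$-rescaled versions reproduces the $0^{\text{th}}$ column of $\Omega_{\lim}$ in terms of the $\alpha_i$.

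**The main obstacle** I anticipate is pinning down exactly why $\kappa(s)^{-1}\Phi$, expanded in $s$, has its coefficients lying in the $\QQ$-span of $\ve_0,\ldots,\ve_n$ (equivalently $\e_0,\ldots,\e_n$) up to order $n$, rather than merely being $T_c$-invariant local solutions of $L$ near $0$ — one must rule out contributions from $T_c$-invariant solutions that are analytic at $0$ but not in that span, and control the interaction with the non-analytic $\log$-terms. I expect this to follow from combining the maximal-unipotent structure at $0$ (the $T_0$-equivariance $T_0\Phi=e^{2\pi\ay s}\Phi$ pins down the log-structure), Lemma \ref{L3.1} ($\delta$ generates $\MM^\vee_p$ under $N_0$, so the $T_c$-coinvariant direction is ``transverse'' to the invariant flag at $0$), and the order count $D^\ell L\phi_m=0$ for $m<\ell+r$ which keeps the relevant $\psi_m$ in the solution space of $L$ itself for $m\leq n$. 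Once that span statement is in hand, the identification with Remark \ref{R4.2}'s period matrix is a bookkeeping exercise with the $(2\pi\ay)^{-j}$ rescalings and the shift $N_0\ve_j=\ve_{j-1}$.
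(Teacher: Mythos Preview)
Your approach is correct and is essentially the mirror image of the paper's proof, but you have misidentified the obstacle and thereby made the argument look harder than it is.

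The ``main obstacle'' you flag --- that the $\psi_m$ might fail to lie in the span of $\e_0,\ldots,\e_n$ --- is not an obstacle at all. For $0\leq m\leq n$, each $\psi_m=\sum_{i+\ell=m}\alpha_i\phi_\ell$ is a $\CC$-linear combination of $\phi_0,\ldots,\phi_n$, hence a solution of $L$; and $\{\e_0,\ldots,\e_n\}$ is a \emph{basis} of $\mathrm{Sol}_p(L)$. So $\psi_m$ is automatically in their $\CC$-span. (The $\QQ$-span is a red herring: the statement $\alpha_i=(2\pi\ay)^i\e_i^{\text{an}}(0)$ is an equality of complex numbers.)

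What actually remains is to identify \emph{which} combination. From $T_0\Phi=e^{2\pi\ay s}\Phi$ you get $N_0\psi_m=2\pi\ay\,\psi_{m-1}$; from $(T_c-I)\Psi=\psi$ you get $(T_c-I)\psi_m=\bm{\delta}_{0m}\psi$; and $\psi_0=\phi_0=\e_0$. Setting $\eta_m:=(2\pi\ay)^{-m}\psi_m$, these become $N_0\eta_m=\eta_{m-1}$, $(T_c-I)\eta_m=\bm{\delta}_{0m}\psi$, $\eta_0=\e_0$. By the uniqueness in Lemma~\ref{L3.1b} (whose proof works verbatim over $\CC$), this forces $\eta_m=\e_m$, i.e.\ $\psi_m=(2\pi\ay)^m\e_m$. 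Evaluating analytic parts at $0$ and using $\phi_\ell^{\text{an}}(0)=\bm{\delta}_{0\ell}$ gives $\alpha_m=\psi_m^{\text{an}}(0)=(2\pi\ay)^m\e_m^{\text{an}}(0)$, done.

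The paper runs the same identification in reverse: it writes $\e_n=\sum_{j=0}^n c_{n-j}\phi_j$ (possible since $\{\phi_j\}$ is also a basis), applies $N_0$ to get $(2\pi\ay)^{-k}\e_{n-k}=\sum_{j}c_{(n-k)-j}\phi_j$, then applies $(T_c-I)$ and the normalization $(T_c-I)\ve_\ell=\bm{\delta}_{0\ell}\delta$ from Lemma~\ref{L3.1b} to obtain the convolution identity $\sum_{j=0}^\ell c_{\ell-j}\kappa_j=(2\pi\ay)^{-n}\bm{\delta}_{0\ell}$, whence $\alpha_i=(2\pi\ay)^n c_i=(2\pi\ay)^i\e_i^{\text{an}}(0)$. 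Your $\Psi=\kappa^{-1}\Phi$ packaging is pleasant, but the content is identical; you should cite Lemma~\ref{L3.1b} rather than Lemma~\ref{L3.1} as the key input.
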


\begin{proof}
From $T_0 \sum_{j\geq 0} \phi_j s^j = e^{2\pi\ay s}\sum_{j\geq 0} \phi_j s^j$, we have $N_0\sum_{j\geq 0}\phi_j s^j = 2\pi\ay s\sum_{j\geq 0}\phi_j s^j = 2\pi\ay \sum_{j\geq 1}\phi_{j-1} s^j$ and thus $N_0 \phi_j = 2\pi\ay \phi_{j-1}$.  Writing $\e_n=\sum_{j=0}^n c_{n-j}\phi_j$ (for some constants $c_i$), applying $N_0$ repeatedly gives $(2\pi\ay)^{-k}\e_{n-k}=\sum_{j=0}^{n-k}c_{(n-k)-j}\phi_j$, hence $\e_{n-k}^{\text{an}}(0)=(2\pi\ay)^k c_{n-k}$.  Now 
\begin{align*}
\sum_{j\geq 0}\kappa_j \psi s^j &= (T_c -I)\sum_{j\geq 0} \phi_j s^j \;\implies \;\kappa_j \psi=(T_c-I)\phi_j \\
&\implies \sum_{j=0}^{\ell} c_{\ell-j}\kappa_j \psi = \sum_{j=0}^{\ell} c_{\ell-j}(T_c-I)\phi_j = \tfrac{1}{(2\pi\ay)^{n-\ell}}(T_c-I)\e_{\ell} \\
&\implies (2\pi\ay)^{n-\ell}(\Sigma_{j=0}^{\ell} c_{\ell-j}\kappa_j)\delta = (T_c-I)\ve_{\ell}=\bm{\delta}_{0\ell}\delta \\
&\implies \alpha_i = (2\pi\ay)^n c_i = (2\pi\ay)^i \e_i^{\text{an}}(0)\;\;\text{for }i=0,\ldots,n,
\end{align*}
as desired.
\end{proof}

\begin{lem}\label{L5.1}
Suppose a power-series $B(t)=\sum_{m\geq 0} B_m t^m$ with radius of convergence $|c|$ extends to an analytic function on $\widetilde{U^{\text{an}}}$, that the restriction of its modulus $|B(t)|$ \textup{(}or $|\int_0 B(t)dt|$\textup{)} to the cut disk $$\mathbb{D}_{\e}:=\{t\mid |t|<|c|+\e,\; \tfrac{t}{c}\notin [1,1+\tfrac{\e}{|c|})\}$$ is bounded \textup{(}for some $\e>0$\textup{)} by $\beta\in \RR_{>0}$, and that its monodromy satisfies $$\Lambda:=(T_c-I)B\sim\lambda (t-c)^{w-1}\;\;\;\text{near}\;\;\;t=c$$ for some $\lambda\in\CC^{\times}$ and $w\in\tfrac{1}{2}\ZZ_{\geq 2}$.  Then $$B_m \sim \frac{\lambda c^{w-1} \Gamma(w)}{2\pi\ay}\times \frac{1}{c^m m^w}  \;\;\;\text{as}\;\;\; m\to \infty.$$
\end{lem}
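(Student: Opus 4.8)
The plan is to extract the asymptotics of $B_m$ by a contour-integral (Cauchy) argument, deforming the circle of radius slightly bigger than $|c|$ and isolating the contribution of the singularity at $t=c$. First I would write $B_m = \frac{1}{2\pi\ay}\oint_{|t|=|c|+\e'} B(t)\, t^{-m}\tfrac{dt}{t}$ for $\e'$ slightly less than $\e$, and then deform this circle inward along the cut so that it hugs the slit $\tfrac{t}{c}\in[1,1+\tfrac{\e}{|c|})$: the contour becomes (a large arc of) the circle of radius $|c|-\e''$ together with a Hankel-type loop around the segment $[c,c(1+\e'/|c|)]$. The boundedness hypothesis on $|B|$ (or $|\int_0 B|$, used after an integration by parts to gain a power of $m^{-1}$ when $B$ itself is not integrable against the curve) forces the arc-at-radius-$(|c|-\e'')$ contribution to be $O((|c|-\e'')^{-m})$, which is exponentially smaller than $c^{-m}m^{-w}$; hence $B_m$ is, up to such negligible error, the integral over the Hankel loop around $t=c$.

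Next I would compute that loop integral. On the loop, the only part of $B$ that contributes is its multivalued part: the jump of $B$ across the cut is exactly $\Lambda=(T_c-I)B\sim \lambda(t-c)^{w-1}$ by hypothesis. So the loop integral of $B(t)t^{-m-1}$ reduces to $\frac{1}{2\pi\ay}\int$ (over a small interval past $c$) of $\Lambda(t)\,t^{-m-1}$, i.e. asymptotically $\frac{\lambda}{2\pi\ay}\int_c (t-c)^{w-1} t^{-m-1}\,dt$. Substituting $t=c(1+u/m)$ turns this into a standard Gamma integral: $(t-c)^{w-1}=c^{w-1}(u/m)^{w-1}$, $t^{-m-1}\sim c^{-m-1}e^{-u}$, $dt=c\,du/m$, so the integral is asymptotically $\frac{\lambda c^{w-1}}{2\pi\ay}\cdot c^{-m}\cdot m^{-w}\int_0^\infty u^{w-1}e^{-u}\,du = \frac{\lambda c^{w-1}\Gamma(w)}{2\pi\ay}\cdot\frac{1}{c^m m^w}$, which is the claimed formula. (When $w\in\tfrac12+\ZZ$ the branch of $(t-c)^{w-1}$ on the two sides of the cut differs by a sign, and one checks the Hankel contour still produces the same $\Gamma(w)$ up to the standard reflection-formula bookkeeping; when $w\in\ZZ$ with $w\geq 2$ the "jump" $\Lambda$ may involve a logarithm and one uses $\frac{1}{2\pi\ay}(T_c-I)$ applied to $(t-c)^{w-1}\log(t-c)$, again recovering $\Gamma(w)$.)

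The main obstacle I expect is \emph{making the contour deformation rigorous} — specifically, controlling $B$ (and its jump) uniformly on the cut disk $\mathbb{D}_\e$ all the way in to the shrunken circle, and justifying that the "error" arcs really are $O((|c|-\e'')^{-m})$ rather than merely $o(c^{-m})$. This is where the boundedness hypothesis on $|B|$, resp. $|\int_0 B|$, does the work: in the non-integrable case ($w\le 1$ would be the genuinely singular regime, but here $w\ge 2$ so $B$ is continuous, though its derivative may blow up) one integrates by parts once, $\oint B\,t^{-m-1}dt = \tfrac{1}{m}\oint (\int_0 B)\,t^{-m-1}\cdot(\text{stuff})$, trading the singularity for an extra $m^{-1}$ and invoking the second boundedness option. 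A secondary but routine subtlety is verifying that the subleading terms hidden in "$\Lambda\sim\lambda(t-c)^{w-1}$" — i.e. the $\mathcal O((t-c)^w)$ correction — contribute only $o(c^{-m}m^{-w})$ after the same substitution, which follows since each extra power of $(t-c)$ costs a factor $m^{-1}$.
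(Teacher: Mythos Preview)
Your overall strategy --- Cauchy integral, contour deformation, and extraction of the leading singularity at $c$ --- is the same as the paper's, but the deformation you describe is oriented the wrong way and this breaks the argument.  You begin with $B_m=\tfrac{1}{2\pi\ay}\oint_{|t|=|c|+\e'}B(t)\,t^{-m-1}\,dt$, but $B$ is not single-valued on that circle, so this formula is not Cauchy's theorem as written.  More seriously, you then deform \emph{inward} to an arc of radius $|c|-\e''$ and claim that arc contributes $O\bigl((|c|-\e'')^{-m}\bigr)$, ``exponentially smaller than $c^{-m}m^{-w}$.''  Since $|c|-\e''<|c|$, the quantity $(|c|-\e'')^{-m}$ is exponentially \emph{larger} than $|c|^{-m}$; and indeed the integral over any circle of radius $<|c|$ equals $2\pi\ay B_m$ exactly (that is the honest Cauchy formula, $B$ being holomorphic there).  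So your ``error term'' is the whole answer, and the decomposition yields nothing.

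The paper instead integrates over the boundary $\partial\DD_{\mathfrak{e}_m}$ of the cut disk of radius $|c|+\mathfrak{e}_m$, with $\mathfrak{e}_m=|c|(w{+}1)\tfrac{\log m}{m}$ shrinking to $0$.  This boundary splits into (i) the \emph{outer} circular arc, on which $|B|\leq\beta$ gives a bound $2\pi\beta/(|c|+\mathfrak{e}_m)^m\sim 2\pi\beta/(|c|^m m^{w+1})$, genuinely smaller than the main term by a factor of $m^{-1}$; and (ii) the two sides of the slit, whose difference is $\Lambda$, contributing $\int_c^{c(1+\mathfrak{e}_m/|c|)}\Lambda(t)\,t^{-m-1}\,dt$.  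The paper evaluates (ii) by a binomial expansion and Stirling for the Beta function $\mathrm{B}(m-w+1,w)\sim\Gamma(w)/m^w$; your substitution $t=c(1+u/m)$ would do the same.  The point of the shrinking radius is that on $[c,c(1+\mathfrak{e}_m/|c|)]$ the hypothesis $\Lambda\sim\lambda(t-c)^{w-1}$ holds \emph{uniformly}, so no separate treatment of subleading terms is needed.  Finally, in the case where $|B|$ is unbounded but $|{\int_0}B|$ is bounded, the paper does not integrate by parts: it simply reruns the argument on $\int_0 B(t)\,dt=\sum_{m\geq 1}\tfrac{B_{m-1}}{m}t^m$ (with $w$ replaced by $w{+}1$) and multiplies the resulting asymptotic for $B_{m-1}/m$ by $m$.
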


\begin{proof}
Write $\mathfrak{e}_m :=|c|(w+1)\tfrac{\log (m)}{m}$, and take $m\in\mathbb{N}$ sufficiently large that $\mathfrak{e}_m<\e$.  By Cauchy, we have $$2\pi\ay B_m = \int_{\partial\DD_{\mathfrak{e}_m}}\frac{B(t)}{t^{m+1}}dt=\oint_{|t|=|c|+\mathfrak{e}_m}\frac{B(t)}{t^{m+1}}dt+\int_c^{c(1+\frac{\mathfrak{e}_m}{|c|})}\frac{\Lambda(t)}{t^{m+1}}dt.$$ 
The first term's modulus is bounded by $$\tfrac{2\pi\beta}{(|c|+\mathfrak{e}_m)^m}=\tfrac{2\pi\beta}{|c|^m\left(1+(w+1)\frac{\log(m)}{m}\right)^m}  \sim \tfrac{2\pi\beta}{|c|^m m^{w+1}}=:B_m'.$$ The second term is asymptotic to 
\begin{align*}
\lambda &
\int_c^{c(1+\frac{\mathfrak{e}_m}{|c|})}\frac{(t-c)^{w-1}}{t^{m+1}}dt = 
\tfrac{\lambda c^{w-1}}{c^m}\sum_{j\geq 0}\tfrac{(-1)^j \binom{w-1}{j}}{m+j+1-w}
\underset{\sim 1}{\underbrace{ \{ 1-(1+\tfrac{\mathfrak{e}_m}{|c|})^{w-(m+j+1)} \}}} \\ 
&\sim \tfrac{\lambda c^{w-1}}{c^m}\int_0^1 X^{m-w} (1-X)^{w-1}dX = \tfrac{\lambda c^{w-1}}{c^m} \mathrm{B}(m-w+1,w)  \\ 
&\sim \tfrac{\lambda c^{w-1}}{c^m} \tfrac{\Gamma(w)}{m^w} =:B_m '',
\end{align*}
where the last line used Stirling's approximation for the beta function.   Since $\tfrac{B_m '}{|B_m ''|}\to 0$, we conclude that $2\pi\ay B_m\sim B_m''$.

If $B(t)$ is not bounded on $\DD_{\e}$, but $\int_0 B(t)dt=\sum_{m\geq 1}\tfrac{B_{m-1}}{m}t^m$ is (e.g. when $w=1$ and $B(t)\sim \tfrac{\lambda}{2\pi\ay}\log(t-c)$ as $t\to c$), then the argument gives $2\pi\ay\tfrac{B_{m-1}}{m}\sim \tfrac{\lambda c^w}{w c^m}\tfrac{\Gamma(w+1)}{m^{w+1}}$, which again gives $2\pi\ay B_m \sim B_m''$. 
\end{proof}

\begin{thm}\label{T5.2}
If $\M$ has strong conifold monodromy,\footnote{All we need is the consequence of Lemma \ref{L3.2}(ii).  The final revision of \cite{BV} includes a result of this form, but with much more restrictive conditions which the Lemma allows us to avoid.} then
\begin{enumerate}[label=\textup{(\roman*)}]
\item $\kappa(s) = c^s\, \lim_{k\to \infty} \frac{A_k(s)}{a_k}$, and thus 
\item $\kappa_m = \sum_{j=0}^m \frac{\log^j c}{j!}\, \lim_{k\to \infty}\frac{a_k^{(m-j)}}{a_k}.$
\end{enumerate}
\end{thm}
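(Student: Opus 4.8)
The plan is to apply Lemma~\ref{L5.1} twice -- once to $\Phi^{\text{an}}(s,\cdot)$ and once to the fundamental period $A(t)$ -- and divide. Fix a compact $K\subset\CC\setminus\ZZ_{<0}$ containing a neighborhood of $0$ and let $s\in K$. By Remark~\ref{R4.1}, $\Phi^{\text{an}}(s,t)=t^{-s}\Phi(s,t)=\sum_{k\geq 0}A_k(s)t^k$ has radius of convergence $|c|$ and continues analytically over $\widetilde{U^{\text{an}}}$. Since $t^{-s}$ is single-valued near $t=c$, the monodromy of this branch there is $(T_c-I)\Phi^{\text{an}}=t^{-s}(T_c-I)\Phi=t^{-s}\kappa(s)\psi(t)$, which by Lemma~\ref{L3.2}(ii) is asymptotic to $c^{-s}\kappa(s)\,C\,(t-c)^{\frac{n-1}{2}}$ as $t\to c$, where we write $\psi\sim C(t-c)^{\frac{n-1}{2}}$. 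To verify the boundedness hypothesis of Lemma~\ref{L5.1} for $B=\Phi^{\text{an}}(s,\cdot)$: on the cut disk $\DD_{\e}$ this branch is continuous away from $c$ (the only other singularity of $\M$ in $\DD$ is $0$, where $\Phi^{\text{an}}=1$), while near $c$, the local exponents of $L$ there being $\geq 0$, it is an analytic function plus a constant times $(t-c)^{\frac{n-1}{2}}$ times an analytic function or a logarithm -- hence bounded for $n\geq 2$; for $n=1$, where $\Phi^{\text{an}}\sim\mathrm{const}\cdot\log(t-c)$, one uses instead that $\int_0\Phi^{\text{an}}\,dt$ is bounded, the alternative form of the hypothesis. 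Lemma~\ref{L5.1} then applies with $w=\tfrac{n+1}{2}$ and $\lambda=c^{-s}\kappa(s)C$, giving $A_k(s)\sim\tfrac{c^{-s}\kappa(s)\,C\,c^{w-1}\Gamma(w)}{2\pi\ay}\, c^{-k}k^{-w}$ as $k\to\infty$.

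Applying Lemma~\ref{L5.1} in exactly the same way to $B=\phi_0=\e_0=A$, whose monodromy at $c$ is $(T_c-I)\e_0=\psi\sim C(t-c)^{\frac{n-1}{2}}$ (again passing to $\int_0 A\,dt$ when $n=1$), gives $a_k\sim\tfrac{C\,c^{w-1}\Gamma(w)}{2\pi\ay}\, c^{-k}k^{-w}$; in particular $a_k\neq 0$ for $k\gg 0$. Dividing the two asymptotics, the factor $c^{-k}k^{-w}$ and the constant $\tfrac{C\,c^{w-1}\Gamma(w)}{2\pi\ay}$ cancel, leaving $\lim_{k\to\infty}A_k(s)/a_k=c^{-s}\kappa(s)$, which is statement (i).

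For (ii) I would extract the Taylor coefficients at $s=0$ from (i). Each $A_k(s)/a_k$ is holomorphic near $0$ (by Remark~\ref{R4.1}, $A_k$ is rational with poles only in $\ZZ\cap[-k,-1]$, and $a_k\neq 0$ for $k\gg 0$), so once the convergence in (i) is known to be uniform on a small disk about $0$, Cauchy's integral formula yields $\lim_{k\to\infty}a_k^{(m)}/a_k=[s^m]\big(c^{-s}\kappa(s)\big)$ for every $m\geq 0$. Multiplying the resulting identity $\sum_{m\geq 0}\big(\lim_{k\to\infty}a_k^{(m)}/a_k\big)s^m=c^{-s}\kappa(s)$ by $c^s=\sum_{j\geq 0}\tfrac{\log^j c}{j!}s^j$ and reading off the coefficient of $s^m$ gives $\kappa_m=\sum_{j=0}^m\tfrac{\log^j c}{j!}\lim_{k\to\infty}a_k^{(m-j)}/a_k$, which is (ii).

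The step I expect to be the main obstacle is precisely this uniformity in $s$. I would obtain it by revisiting the proof of Lemma~\ref{L5.1} and checking that its error terms are uniform for $s\in K$: the bound $\beta=\beta(s)$ on $|\Phi^{\text{an}}|$ (resp. $|\int_0\Phi^{\text{an}}\,dt|$) over $\DD_{\e}$, the leading coefficient $\lambda(s)=c^{-s}\kappa(s)C$, and the near-$c$ correction factor $t^{-s}/c^{-s}$ (which $\to 1$ uniformly as $t\to c$) are all jointly continuous in $(s,t)$; equivalently, one checks that $\{A_k(s)/a_k\}_k$ is locally uniformly bounded and invokes Vitali's theorem. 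A secondary nuisance throughout is the $n=1$ exceptional case, where $\Phi^{\text{an}}$ and $A$ are only log-bounded on $\DD_{\e}$ and one must work with their primitives.
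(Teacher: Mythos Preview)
Your argument is correct but takes a different route from the paper. Rather than applying Lemma~\ref{L5.1} to $\Phi^{\text{an}}(s,\cdot)$ and $A(t)$ separately (both with exponent $w=\tfrac{n+1}{2}$) and then dividing the two asymptotics, the paper first forms the \emph{difference} $\B(t):=\Phi^{\text{an}}(s_0,t)-\tfrac{\kappa(s_0)}{c^{s_0}}\phi_0(t)=\sum_k\bigl(A_k(s_0)-\tfrac{\kappa(s_0)}{c^{s_0}}a_k\bigr)t^k$. Since $(T_c-I)\B=(t^{-s_0}-c^{-s_0})\kappa(s_0)\psi\sim-\tfrac{s_0\kappa(s_0)}{c^{s_0+1}}(t-c)\psi(t)$, the extra factor of $(t-c)$ raises the exponent in Lemma~\ref{L5.1} to $w=\tfrac{n+3}{2}$; one then gets $b^{\B}_m\sim\tfrac{\mathtt{C}''}{c^m m^{(n+3)/2}}$ against $a_m\sim\tfrac{\mathtt{C}'}{c^m m^{(n+1)/2}}$, hence $b^{\B}_m/a_m\to 0$, which is exactly (i).

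What each approach buys: the paper's subtraction makes the target of the limit $0$ rather than the $s$-dependent $c^{-s}\kappa(s)$, so the rate is visibly $O(1/m)$ with leading constant $\tfrac{\mathtt{C}''}{\mathtt{C}'}=-\tfrac{n+1}{2}\tfrac{s_0\kappa(s_0)}{c^{s_0}}$; uniformity near $s=0$ (needed for (ii)) is then read off from the continuity of this constant, without reopening the proof of Lemma~\ref{L5.1}. Your approach is more direct and introduces no auxiliary function, and your plan for uniformity (joint continuity of $\beta(s)$, $\lambda(s)$, and $t^{-s}/c^{-s}$, or an appeal to Vitali) is a perfectly adequate substitute. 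One small caveat: Lemma~\ref{L5.1} requires $\lambda\in\CC^{\times}$, and your $\lambda=c^{-s}\kappa(s)C$ vanishes at zeros of $\kappa$; at such $s$, (i) follows instead from the fact that $\Phi^{\text{an}}(s,\cdot)$ then has trivial monodromy at $c$ and extends past it (or simply by analytic continuation in $s$). This point is irrelevant for (ii), since $\kappa(0)=1$.
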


\begin{proof}
Observe that $\tilde{\Phi}:=\Phi-\kappa\phi_0$ has no monodromy about $t=c$ for any fixed $s=s_0$, so that $$\hat{\Phi}_{s_0}(t):=\Phi(s_0,t)-\tfrac{t^{s_0}}{c^{s_0}}\kappa(s_0)\phi_0(t)=\tilde{\Phi}(s_0,t)+(1-\tfrac{t^{s_0}}{c^{s_0}})\kappa(s_0)\phi_0(t)$$ has $(T_c-I)\hat{\Phi}_{s_0}=(1-\tfrac{t^{s_0}}{c^{s_0}})\kappa(s_0)\psi$.  The function
\begin{align*}
\B(t):&= t^{-s_0}\hat{\Phi}_{s_0} = \Phi^{\text{an}}(s_0,t)-\tfrac{\kappa(s_0)}{c^{s_0}}\phi_0(t) = \sum_{k\geq 0} A_k(s_0)t^k - \tfrac{\kappa(s_0)}{c^{s_0}}\sum_{k\geq 0}a_k t^k \\
&= \sum_{k\geq 0} (A_k(s_0)-\tfrac{\kappa(s_0)}{c^{s_0}} a_k) t^k,
\end{align*}
which is clearly invariant about $t=0$, then has $$(T_c -I)\B(t)=(t^{-s_0}-c^{-s_0})\kappa(s_0)\psi(t)\sim -\tfrac{s_0 \kappa(s_0)}{c^{s_0 +1}}(t-c)\psi(t)$$ for $t$ near $c$, while $(T_c-I)\phi_0(t) \sim \psi(t)$.  

By Lemma \ref{L3.2}(ii) we have $\psi(t)\sim C (t-c)^{\frac{n-1}{2}}$, as well as the boundedness of $\phi_0(t)=\sum_{m\geq 0}a_m t^m$ (or its integral) and $\B(t)=:\sum_{m\geq 0} b^{\B}_m t^m$ required for the application of Lemma \ref{L5.1}.  This yields
$$a_m\sim \tfrac{\mathtt{C}'}{c^m m^{\frac{n+1}{2}}}\;\;\;\;\text{and}\;\;\;\; b_m^{\B} \sim \tfrac{\mathtt{C}''}{c^m m^{\frac{n+3}{2}}}\, , $$
and so $\lim_{m\to \infty} \tfrac{b_m^{\B}}{a_m}=\tfrac{\mathtt{C''}}{\mathtt{C'}}\lim_{m\to \infty}\tfrac{1}{m}=0$.  That is, $$0=\lim_{m\to \infty} \frac{A_m(s_0)-\frac{\kappa(s_0)}{c^{s_0}}a_m}{a_m}=\lim_{m\to \infty}\left(\frac{A_m(s_0)}{a_m}-\frac{\kappa(s_0)}{c^{s_0}}\right)$$ which gives (i).  In fact, since $\tfrac{\mathtt{C}''}{\mathtt{C}'}=-\tfrac{n+1}{2}\tfrac{s_0 \kappa(s_0)}{c^{s_0}}$, this limit is uniform in $s$ in a neighborhood of $s=0$; we may thus expand $c^s$ and equate power-series coefficients, whence (ii).
\end{proof}

\begin{rem}\label{R5.2}
The flavor here is that, while $a_m$ and $A_m(s_0)$ have similar growth rate, the particular linear combination $A_m(s_0)-\tfrac{\kappa(s_0)}{c^{s_0}}a_m$ has somewhat slower growth.  This characterization of $\tfrac{\kappa(s_0)}{c^{s_0}}$ is vaguely reminiscent to that of $\zeta(3)$ in Ap\'ery's proof, though what happens at positive integer values of $s_0$ is much closer to the Ap\'ery phenomenon; see Remark \ref{R8.1} and Example \ref{E8.1}.
\end{rem}

\begin{example}\label{E5.1}
When $L$ is a hypergeometric operator (cf. \cite[\S3]{BV}), the results of this section suffice to \emph{compute} the matrix $\Omega_{\lim}$ from Remark \ref{R4.2}.  Suppose that $L$ arises as in \S\ref{S3}, with strong conifold monodromy at $c=1$, and takes the form $L=D^r + tP_1(D)$, with $P_1(D)=-\prod_{j=1}^r (D+\fa_j)$.  Then $q_0=1-t$ implies $\Sigma^{\times}=\{1\}$ and (via Prop. \ref{P6.1}(vi) below) $L^{\dagger}= L$, whence $\{\fa_j\}=\{1-\fa_j\}$ as sets and $\sum\fa_j=\tfrac{r}{2}$.  By the recurrence in Remark \ref{R4.1}, we have $$A_k(s)=\prod_{j=1}^r \frac{\Gamma(k+s+\fa_j)\Gamma(s+1)}{\Gamma(s+\fa_j)\Gamma(k+s+1)},$$ and so Theorem \ref{T5.2}(i) together with Stirling's formula yields $$\kappa(s)^{-1}=\lim_{k\to \infty}\frac{a_k}{A_k(s)}=\lim_{k\to \infty}\frac{A_k(0)}{A_k(s)}=\prod_{j=1}^r \frac{\Gamma(s+\fa_j)}{\Gamma(s+1)\Gamma(\fa_j)}.$$  This is enough to recover, for instance, the LMHSs for the complete intersection CY families in \cite{DM}, previously computed (using Iritani's mirror theorem \cite{Ir}) in \cite[\S4]{dSKP}.

To illustrate, consider the mirror quintic family ($\PP^4[5]$ in [op. cit.]),\footnote{Take $\vf=\sum_{i=1}^4 x_i+\prod_{i=1}^4 x_i^{-1}$ and replace $t$ by $t^5$ as at the end of Ex. \ref{E3.1}.} with $r=4$ and $\underline{\fa}=(\tfrac{1}{5},\tfrac{2}{5},\tfrac{3}{5},\tfrac{4}{5})$.  Taking the power-series expansion of $\prod_{j=1}^4 \tfrac{\Gamma(s+\frac{j}{5})}{\Gamma(s+1)\Gamma(\frac{j}{5})}$, we obtain ${}^t (\alpha_0,\alpha_1,\alpha_2,\alpha_3)=$ $${}^t\left(1,-5\log 5,10\zeta(2)+\tfrac{25}{2}\log^2 5,-40\zeta(3)-50(\log 5)\zeta(2)-\tfrac{125}{6}\log^3 5\right)$$ for the $0^{\text{th}}$ column of $\Omega_{\lim}$.  One arrives at the more standard form of this data by renormalizing the LMHS with respect to the local coordinate $\tfrac{t}{5^5}$, which means multiplying the column vector by $e^{(5\log 5)[N_0]_{\underline{\ve}}}$; this yields ${}^t (1,0,10\zeta(2),-40\zeta(3))$.  Moreover, the correct integral basis of the dual local system is not $\underline{\ve}=(\ve_0,\ve_1,\ve_2,\ve_3)$ but rather $(\ve_0,\ve_1,5\ve_2,5\ve_3)$; this leads us to multiply the last two entries of the vector by $5$.  The resulting invariants $50\zeta(2)$ and $-200\zeta(3)$ correspond exactly to $a=50$ and $b=-200$ in the table in [op. cit.].
\end{example}

\section{Conifold Gamma}\label{S6a}
The main theorem of \cite{BV}, a variant of which is given in the next section, is a precise relationship between $\kappa(s)$ and a specific Gamma function $\Gamma_c(s)$.  The latter involves particular choices of section $\mc\in H^0(U,\F^n)$ and homology class $\xi_c\in H_1(U,\MM(s)^{\vee}_{\QQ})$.  We first explain where the section comes from.

Let $\{e_j\}_{j=0}^n\subset \M^{\vee}(U)$ be the dual basis of $\{D^i\mu\}\subset \M(U)$.  Since the latter are meromorphic as sections of $\M_e$ on $\PP^1$, the former are meromorphic sections of $\M_e^{\vee}$.  Using $De_j + e_{j-1}=\tfrac{q_{r-j}}{q_0}e_n$, one checks as in \cite[\S4]{BV} that $L^{\dagger}(\tfrac{e_n}{q_0})=0$.  Moreover, by definition $e_n$ pairs to zero with generators of $\F^1\M$, and so it belongs to $\F^0\M^{\vee}=Q(\F^n\M)$, whence $e_n=\tfrac{Q(\mu)}{Y}$ for some $Y\in\CC(t)^{\times}$.  As $\langle e_n,D^n\mu\rangle=1$, $$Y=Y\langle e_n,D^n \mu\rangle=\langle Q(\mu),D^n\mu\rangle=Q(\mu,D^n\mu)$$ is the \emph{Yukawa coupling}.  Besides being a rational function, it has the following properties:

\begin{prop}\label{P6.1}
In the setting of \S\ref{S3}, we have:
\begin{enumerate}[leftmargin=1cm,label=\textup{(\roman*)}]
\item $Y(0)=\tfrac{1}{(2\pi\ay)^{n}\mathsf{Q}_0}.$
\item $DY=-\tfrac{2}{r}\tfrac{q_1}{q_0}Y$ \textup{(}recall $r=n+1$\textup{)}.
\item $p:=\tfrac{q_0 Y}{Y(0)}$ is a polynomial with $p(0)=1$.
\item The adjoint operator is given by $L^{\dagger}=\tfrac{1}{p}Lp$.
\item If $\M$ has strong conifold monodromy at $c$, then $p(c)\neq 0$.
\item The conditions $p\equiv 1$, $Y=\tfrac{Y(0)}{q_0}$, $L^{\dagger}= L$, and $q_1 = \tfrac{r}{2} Dq_0$ are equivalent.  They hold in particular when $|\Sigma^{\times}|=d$ and $\M$ has strong conifold monodromy at each point of $\Sigma^{\times}$.
\end{enumerate}
\end{prop}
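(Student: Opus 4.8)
I would prove the six parts in the order (i), (iv), (ii), (iii), (v), (vi), since (iv) and (ii) are quick corollaries of (i) together with the structure established just before the Proposition, while (iii) rests on (iv). For \emph{(i)}: near $t=0$ expand $\mu=\sum_{j=0}^n\e_j(t)\,\ve_j^{\vee}$ in the flat frame of $\MM$ dual to $\{\ve_j\}$, so $D^n\mu=\sum_j(D^n\e_j)(t)\,\ve_j^{\vee}$. From $\e_j\sim\el(t)^j/j!$ (Lemma~\ref{L3.2}(i)) and $D(\log t)^m=m(\log t)^{m-1}$ one gets $D^n\e_k=\mathcal{O}(t^{n-k})\to0$ for $k<n$ and $D^n\e_n\to(2\pi\ay)^{-n}$; and since $\ve_j^{\vee}=(-1)^jN_0^j\ve_0^{\vee}$ with $N_0$ infinitesimally $Q$-skew (it is a logarithm of an element of the monodromy group), $Q(\ve_j^{\vee},\ve_n^{\vee})=\pm Q(\ve_0^{\vee},N_0^j\ve_n^{\vee})=0$ for $j\geq1$ (as $N_0\ve_n^{\vee}=0$). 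Hence in $Y(0)=\lim_{t\to0}\sum_{j,k}\e_j\,(D^n\e_k)\,Q(\ve_j^{\vee},\ve_k^{\vee})$ only the term $(j,k)=(0,n)$ survives, giving $Y(0)=(2\pi\ay)^{-n}\,Q(\ve_0^{\vee},\ve_n^{\vee})$. The same skewness argument shows $M:=[Q(\ve_j^{\vee},\ve_k^{\vee})]$ is anti-triangular with anti-diagonal $M_{j,n-j}=(-1)^jM_{0n}$, so $(M^{-1})_{n0}=1/M_{0n}$; since the pairing on $\M^{\vee}$ has matrix $(M^{-1})^{T}$ and $\mathsf{Q}_0=Q(\ve_0,\ve_n)=(M^{-1})_{n0}$ (footnote of \S\ref{S3}), we obtain $Q(\ve_0^{\vee},\ve_n^{\vee})=1/\mathsf{Q}_0$, proving (i); in particular $Y(0)\neq0$, so $p$ is well-defined.

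For \emph{(iv)}: the polarization is a flat isomorphism $Q\colon\M\xrightarrow{\sim}\M^{\vee}$, so $\nabla_L(Q\mu)=Q(\nabla_L\mu)=0$ and $Q\mu$ is a cyclic vector of $\M^{\vee}$ with minimal operator $L$. Thus $e_n/q_0=\tfrac1{q_0Y}Q\mu$, being a nonzero rational multiple of $Q\mu$, has minimal operator the conjugate $\tfrac1{q_0Y}\,L\,(q_0Y\,\cdot\,)$, normalized to leading coefficient $q_0$; since $L^{\dagger}$ annihilates $e_n/q_0$, has order $r$, and has leading coefficient $q_0$, it must equal this conjugate, which (using $q_0Y=Y(0)p$) is $\tfrac1pLp$. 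For \emph{(ii)}: comparing the coefficient of $D^{r-1}$ on the two sides of $L^{\dagger}=\tfrac1pLp$ — namely $r\,Dq_0-q_1$ on the left, and $\tfrac{rq_0\,Dp}{p}+q_1=r\,Dq_0+rq_0\,D\log Y+q_1$ on the right — gives $rq_0\,D\log Y=-2q_1$, i.e.\ $DY=-\tfrac2r\tfrac{q_1}{q_0}Y$.

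For \emph{(iii)}: $p(0)=q_0(0)Y(0)/Y(0)=q_0(0)=1$ by the normalization $P_0(D)=D^r$. Polynomiality follows from the operator identity $p\,L^{\dagger}=L\,p$ of (iv) together with the fact that $L^{\dagger}$ has polynomial coefficients: a pole of $p$ at a point $\rho\notin\Sigma$ makes the $i{=}r$ summand of $L\,p$ the uniquely most singular one and forces a pole in some coefficient of $L^{\dagger}$, a contradiction; and at $\sigma\in\Sigma^{\times}$ the regular-singularity bounds $\ord_{\sigma}q_{r-i}\geq\ord_{\sigma}q_0-(r-i)$, combined with the local exponents of $L$ at $\sigma$ being $\geq0$ (as $\mu$ is a section of the top Hodge bundle that is holomorphic and nonzero away from $\infty$), rule out a pole there as well; hence $p\in\CC[t]$. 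For \emph{(v)}: by (the proof of) Lemma~\ref{L3.2}(ii), $\ord_{t=c}q_0=1$, and the multiset of local exponents of $L$ at a strong conifold point is $\{0,1,\dots,n{-}1\}\cup\{\tfrac{n-1}{2}\}$. Writing $\widetilde{\mathcal{W}}=\det[D^i\e_j]=t^{\binom r2}\mathcal{W}$ with $\mathcal{W}$ the Wronskian of $L$, the exponent formula gives $\ord_c\mathcal{W}=\big(\binom n2+\tfrac{n-1}{2}\big)-\binom r2=-\tfrac r2$; comparing the anti-triangular Gram matrix $[Q(D^i\mu,D^j\mu)]$ (anti-diagonal $(-1)^iY$, by Griffiths transversality and $\nabla Q=0$) with its Betti-frame counterpart yields $Y^{r}=(\text{const})\,\widetilde{\mathcal{W}}^{2}$, whence $\ord_cY=\tfrac2r\ord_c\mathcal{W}=-1$ and $p(c)=q_0(c)Y(c)/Y(0)\in\CC^{\times}$.

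Finally \emph{(vi)}: the first two conditions coincide by the definition of $p$; $p\equiv1\Rightarrow L^{\dagger}=\tfrac1pLp=L$, and conversely $L^{\dagger}=L$ forces $[L,p]=0$, whose coefficient of $D^{r-1}$ is $rq_0\,Dp$, so $Dp=0$ and $p=p(0)=1$; and by (ii), $Y=Y(0)/q_0\iff D\log Y=-D\log q_0\iff\tfrac2r\tfrac{q_1}{q_0}=\tfrac{Dq_0}{q_0}\iff q_1=\tfrac r2Dq_0$. When $|\Sigma^{\times}|=d$ and $\M$ has strong conifold monodromy at each $\sigma\in\Sigma^{\times}$, the argument of (v) gives $\ord_{\sigma}q_0=1$ and $\ord_{\sigma}Y=-1$ at each of these $d$ points; since $\deg q_0\leq d$, this forces $q_0=(\text{const})\prod_{\sigma\in\Sigma^{\times}}(t-\sigma)$, so $q_0Y$ — a polynomial by (iii) — vanishes nowhere on $\CC$ (on $U$ because the Wronskian of $L$ is zero-free off $\Sigma$ and $Q$ is nondegenerate, and at the $\sigma$ and at $0$ by the order count), hence is a nonzero constant, i.e.\ $p\equiv1$. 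The only genuinely delicate step is the polynomiality in (iii): teasing $\ord_{\sigma}p\geq0$ out of $p\,L^{\dagger}=L\,p$ through the regular-singularity inequalities requires care at those $\sigma\neq c$ in $\Sigma^{\times}$ where $q_0$ may have a higher-order zero and $L$ an apparent singularity that $L^{\dagger}$ lacks; the remaining arguments are bookkeeping once the flat-frame expansion of $\mu$ near $0$, the conjugation identity $L^{\dagger}=\tfrac1pLp$, and the Wronskian identity $Y^{r}=(\text{const})\,\widetilde{\mathcal{W}}^{2}$ are in hand.
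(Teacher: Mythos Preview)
Your arguments for (i), (iv), and (vi) are essentially those of the paper (the paper's (i) is phrased as $D^n\mu\sim(2\pi\ay)^{-n}\ve_n^{\vee}$ and then $Q(\mu,\ve_n^{\vee})=(-1)^n\langle Q(\ve_n^{\vee}),\mu\rangle=\mathsf{Q}_0^{-1}\langle\ve_0,\mu\rangle$, which is your matrix computation in disguise; your ``$\mathcal{O}(t^{n-k})$'' should really be $O(t\cdot\mathrm{polylog}(t))$, but the limit is correct).  For (ii) and (v) you take a genuinely different route: the paper proves (ii) \emph{before} (iv), directly from $Q(D^k\mu,D^{n-k}\mu)=(-1)^kY$ and the Picard--Fuchs relation, and proves (v) by combining $Y=\pm Q(D^m\mu,D^{n-m}\mu)$ (for $m=\lfloor n/2\rfloor$) with the asymptotics of Lemma~\ref{L3.2}(ii).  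Your derivation of (ii) from the $D^{r-1}$-coefficient of $L^{\dagger}=\tfrac{1}{p}Lp$ is slicker, and your Wronskian identity $Y^{r}=\text{const}\cdot\widetilde{\mathcal{W}}^{2}$ together with the exponent count at $c$ is a perfectly good alternative for (v); both approaches are valid and each buys something (the paper's stays closer to the polarization, yours is purely differential-algebraic and reusable at any $\sigma$).

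The one genuine gap is (iii).  Your argument at $\sigma\in\Sigma^{\times}$---regular-singularity bounds on the $q_{r-i}$ plus ``exponents $\geq 0$''---is not a proof as written, and you yourself flag it as delicate.  Concretely: from $pL^{\dagger}=Lp$ alone one cannot read off $\ord_\sigma p\geq 0$, because when $\ord_\sigma q_0>1$ the lower-order terms of $Lp$ can carry the highest pole and the comparison of coefficients does not force regularity of $p$.  If you want to stay on your path, the clean fix is to use the Wronskian identity you already have: $r\cdot\ord_\sigma Y=2\bigl(\sum_i\rho_i-\binom{r}{2}\bigr)$, so $\ord_\sigma p=\ord_\sigma q_0+\tfrac{2}{r}\sum_i\rho_i-(r-1)$; but then ``$\rho_i\geq 0$'' is not enough---you need the sharper local Fuchs-type bound linking $\sum_i\rho_i$ to $\ord_\sigma q_0$, which you have not supplied.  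The paper instead dispatches (iii) with a single Hodge-theoretic line:
\[
\ord_\sigma q_0 \;\geq\; \rk(T_\sigma-I)\;=\;\rk(\mathrm{Res}_\sigma\nabla)\;\geq\;-\ord_\sigma Y,
\]
where the last inequality comes from the fact that $\mathrm{Res}_\sigma\nabla$ shifts $F^{\bullet}_e$ by one (so the pole of $Y=Q(\mu,D^n\mu)$ is governed by the number of nonzero graded residue maps $\gr_F^{p}\to\gr_F^{p-1}$, which is at most $\rk(\mathrm{Res}_\sigma\nabla)$).  This bypasses the operator-identity bookkeeping entirely and is what you should use to close the gap.
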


\begin{proof}[Sketch]
(i) Applying $D^n$ to Lemma \ref{L3.2}(i) gives $\langle \ve_k,D^n\mu\rangle\sim (2\pi\ay)^{-n}\bm{\delta}_{kn}$ as $t\to 0$ hence $D^n\mu \sim (2\pi\ay)^{-n}\ve_n^{\vee}$.  So we have $(2\pi\ay)^n Y\sim Q(\mu,\ve_n^{\vee})\sim (-1)^n \langle Q(\ve_n^{\vee}),\mu\rangle =\langle \tfrac{\ve_0}{\mathsf{Q}_0},\mu\rangle \sim \tfrac{1}{\mathsf{Q}_0}$.

(ii) Take $m=\lfloor\tfrac{n}{2}\rfloor$. Applying $D$ to $Q(D^{i-1}\mu,D^{n-i}\mu)=0$ for $1\leq i\leq k$ yields $Q(D^k\mu,D^{n-k}\mu)=(-1)^k Y$; whence $DY=Q(D\mu,D^n \mu)+Q(\mu,-\Sigma_{i=0}^n \tfrac{q_{r-i}}{q_0}D^i\mu)=-mDY+(-1)^m Q(D^{m+1}\mu,D^{n-m}\mu)-\tfrac{q_1}{q_0}Y$, in which the middle term is $0$ for $n$ odd and $\tfrac{1}{2}DY$ for $n$ even.

(iii) At $\sigma\in\Sigma^{\times}$, $\ord_{\sigma}q_0\geq \rk(T_{\sigma}-I)=\rk(\mathrm{Res}_{\sigma}(\nabla))\geq -\ord_{\sigma}Y$.

(iv) Writing $L^{\dagger}$ and $\tfrac{1}{p}Lp$ in the form $\sum_i p_{r-i}(t) D^i$, they have the same $p_0$. But then they are equal because both kill $\tfrac{e_n}{q_0}=Q(\tfrac{\mu}{p})$: we have $Lp(\tfrac{e_n}{q_0})=Q(Lp\tfrac{\mu}{p})=Q(L\mu)=0$.

(v) Using $Y=\pm Q(D^m\mu,D^{n-m}\mu)$ from (ii) above with Lemma \ref{L3.2}(ii) shows that $Y$ has a simple pole at $t=c$; this cancels the zero of $q_0$.

(vi) The equivalence is clear. By (ii), $q_0$ has a zero at each zero or pole of $Y$, and $d$ strong conifolds exhausts the zeroes of $q_0$ (as $\deg(q_0)\leq d$).  So on $\PP^1\setminus \{\infty\}$, $Y$ has $d$ simple poles at these points, and no other zeroes or poles.
\end{proof}

Accordingly we shall set $$\mc:=\tfrac{1}{p}\mu\in H^0(U,\F^n\M)$$ and $\tilde{A}(t)=\sum_{m\geq 0}\tilde{a}_m t^m := \langle \ve_0,\mc\rangle = \tfrac{A(t)}{p(t)}$.  Notice that $\mc$ and thus $\tilde{A}$ are annihilated by $L^{\dagger}$. However, we also point out that the situation in (vi) is both easy to check and quite common for LG-models; and in that case, $\mc=\mu$ and $\tilde{A}=A$.

\begin{rem}\label{R6.1}
In view of Prop. \ref{P6.1}(iv), we say that $L$ is \emph{essentially self-adjoint} (cf. \cite[\S2.4]{vS}); this reflects the self-duality $\M^{\vee}\cong\M(n)$.  But the operator
$\hat{L}:=\tfrac{1}{\sqrt{p}}L\sqrt{p}$ satisfies $\hat{L}^{\dagger}= \hat{L}$, i.e. it is \emph{self-adjoint} on the nose.  Why don't we replace $L$ by this?  First, $p$ may not be a square, even for something as simple as a family of elliptic curves with an $\mathrm{I}_0^*$ fiber; in this case, $\hat{L}$ corresponds to a quadratic twist of $\M$ (not $\M$ itself).  Second, even if $p$ is a square, $\hat{L}$ corresponds to $\tfrac{\mu}{\sqrt{p}}$ (in place of $\mu$), which is a \emph{strictly meromorphic} section of $\M_e$ (unless of course $p\equiv 1$).  We prefer to work with the true \emph{Picard-Fuchs equation} of $\M$, i.e. the one corresponding to $\mu$ as we normalized it in \S\ref{S3}.

However, we feel obliged to point out that in the LG-model setting of Example \ref{E3.1}, $L$ itself turns out to be self-adjoint (i.e. $p\equiv 1$) with striking frequency.  Though one can certainly cook up counterexamples (e.g. see Remark \ref{R8.2}(ii)), consider the fact that this holds for all $23$ of the PF operators of order $3$ arising in the table of ``3D Minkowski period sequences'' in \cite{FANO}.  So the reader mainly interested in this case might consider ignoring the daggers from here on out.
\end{rem}

Turning to the homology class, we write $$P(x):=(x-1)^r =\sum_m \lambda_m x^m$$ and set
$$\xi_c := \left[ \sum \lambda_m \gamma_0^m \otimes \delta \otimes e^{2\pi\ay m s}+\gamma_c^{-1}\otimes \ve_0\otimes P(e^{2\pi\ay s}) \right]\in H_1(U,\MM(s)^{\vee}_{\QQ}).$$
This is well-defined since applying $\partial$ to the bracketed expression yields
\begin{align*}
\sum_m &\lambda_m \gamma_0^{-m}(\delta\otimes e^{2\pi\ay ms})-\sum_m \lambda_m \delta \otimes e^{2\pi\ay ms} + (\gamma_c -1)\ve_0 \otimes P(e^{2\pi\ay s}) \\
&= \sum_m \lambda_m \gamma_0^{-m}\delta \otimes 1 - \delta\otimes P(e^{2\pi\ay s}) + \delta\otimes P(e^{2\pi\ay s})\\
&= (\gamma_0^{-1}-1)^r \delta\otimes 1 = 0.
\end{align*}

\begin{defn}\label{D6.1}
The \emph{conifold Gamma} is $\Gamma_c(s):=\Gamma_{\xi_c,\mc}(s)$.
\end{defn}

Let $\fuo$ and $\fuc$ be neighborhoods of $0$ and $c$ containing $\gamma_0$ and $\gamma_c$ respectively (and no other roots of $q_0$); then $\fuo\cap U=\fuox:=\fuo\setminus \{0\}$ and $\fuc\cap U=\fucx:=\fuc\setminus \{c\}$, and $p\in \fuo\cap \fuc$.  Write $$\fu:= \fuox \cup \fuc \;\;\;\;\text{and}\;\;\;\; \fux :=\fuox\cup \fucx.$$
Notice that $\xi_c$ is supported on $\fux$.

\begin{prop}\label{P6.2}
Suppose $\M$ has strong conifold monodromy at $c$.  Then the $\QQ[e^{2\pi\ay s}]$-module of Gamma functions \textup{(}for $\mc$\textup{)} arising from $H_1(\fux,\MM(s)_{\QQ}^{\vee})$ has rank one and is spanned by $\xi_c$.
\end{prop}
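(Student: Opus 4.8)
The plan is to compute the homology $H_1(\fux,\MM(s)_{\QQ}^{\vee})$ directly from the structure of $\fux$ as a disjoint union of two punctured disks $\fuox$ and $\fucx$. First I would observe that since $\fux = \fuox \sqcup \fucx$ is disconnected, the group-homology computation of Remark \ref{R1.1}(i) splits as a direct sum, with $H_1(\fuox,\MM(s)_{\QQ}^{\vee})$ governed by the cyclic group $\langle\gamma_0\rangle$ acting via $T_0 \otimes e^{2\pi\ay s}$, and $H_1(\fucx,\MM(s)_{\QQ}^{\vee})$ by $\langle\gamma_c\rangle$ acting via $T_c \otimes e^{2\pi\ay s}$. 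For a cyclic group $\langle\gamma\rangle\cong\ZZ$ acting on a module $M$ by an automorphism $g$, one has $H_1 = \ker(g - I)$ (and $H_0 = \mathrm{coker}(g-I)$), so $H_1(\fuox,-) = \ker(e^{2\pi\ay s}T_0 - I)$ and $H_1(\fucx,-) = \ker(e^{2\pi\ay s}T_c - I)$, both taken in $\MM_{\QQ,p}^{\vee}\otimes\QQ[e^{\pm2\pi\ay s}]$.

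Next I would compute these two kernels as $\QQ[e^{\pm 2\pi\ay s}]$-modules. Since $N_0$ is regular nilpotent of rank $n$ (it sends $\ve_j\mapsto\ve_{j-1}$ on an $(n+1)$-dimensional space), the operator $e^{2\pi\ay s}T_0 - I = e^{2\pi\ay s}e^{N_0} - I$ has, generically in $e^{2\pi\ay s}$, rank $n+1$ and rank exactly $n$ precisely when $e^{2\pi\ay s}=1$, where its kernel is $\QQ\ve_0$; working over the ring $\QQ[e^{\pm2\pi\ay s}]$ one checks the kernel is the rank-one module generated by $\sum_m \lambda_m (e^{2\pi\ay s})^m \otimes (e^{2\pi\ay s}e^{N_0})^{-m}\ve_? $ — more simply, $\ker(e^{2\pi\ay s}e^{N_0}-I)$ is spanned over $\QQ[e^{\pm 2\pi\ay s}]$ by a single vector whose reduction at $e^{2\pi\ay s}=1$ is a multiple of $\ve_0$; the factor $(e^{2\pi\ay s}-1)^r = P(e^{2\pi\ay s})$ that appears in $\xi_c$ is exactly the content (order of vanishing) coming from clearing denominators, since $N_0$ has a single Jordan block of size $r = n+1$. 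For the conifold factor, strong conifold monodromy gives $\mathrm{rk}(T_c - I) = 1$ with image $\QQ\delta$, so by Picard--Lefschetz $e^{2\pi\ay s}T_c - I$ has a rank-one kernel over $\QQ[e^{\pm2\pi\ay s}]$ as well; the relevant generator is $\gamma_c^{-1}\otimes\ve_0\otimes P(e^{2\pi\ay s})$ paired up (via the boundary relation already verified before Definition \ref{D6.1}) with the $\gamma_0$-term. I would then assemble: the $\QQ[e^{2\pi\ay s}]$-module of Gamma functions is the image of $H_1(\fux,-)$ under $\xi\mapsto\Gamma_{\xi,\mc}$, and the boundary computation preceding Definition \ref{D6.1} shows that $\xi_c$ is a well-defined class lying in this image; combined with the rank-one computation of each cyclic piece and the cocycle condition linking them, the module is rank one with $\xi_c$ a generator.

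The main obstacle I anticipate is the bookkeeping of the $\QQ[e^{\pm2\pi\ay s}]$-module (as opposed to $\QQ$-vector-space) structure: passing from "the kernel has rank one over the fraction field" to "it is a free rank-one module with the specific generator $\xi_c$" requires tracking the exponent $r$ in $P(e^{2\pi\ay s}) = (e^{2\pi\ay s}-1)^r$ — one must see that a naive kernel generator has denominators that, when cleared, produce exactly $(e^{2\pi\ay s}-1)^r$, no more and no less, and that the two cyclic summands glue along a single relation rather than contributing independently. A clean way around this is to note that $\Gamma_{\xi,\mc}$ kills exact classes (Remark \ref{R2.1}(ii)), so it suffices to show (a) every class in $H_1(\fux,\MM(s)_{\QQ}^{\vee})$ is, modulo the kernel of $\xi\mapsto\Gamma_{\xi,\mc}$, a $\QQ[e^{2\pi\ay s}]$-multiple of $\xi_c$, and (b) $\Gamma_c(s) = \Gamma_{\xi_c,\mc}(s)\not\equiv 0$; part (b) will follow once one knows (from the main theorem of \cite{BV}, stated in the next section) that $\Gamma_c(s)$ is a nonzero multiple of $\kappa(s)$, or alternatively by inspecting the leading asymptotics of the integral directly. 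I would carry out (a) by the explicit kernel computation above and (b) by citing the forthcoming relation, which is legitimate since Proposition \ref{P6.2} is used only to set up that comparison.
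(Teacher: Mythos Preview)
Your proposal contains two genuine errors that make the argument break down before it starts.

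First, $\fux$ is \emph{not} disconnected. By definition $\fuo$ and $\fuc$ are neighborhoods of $0$ and $c$ with $p\in\fuo\cap\fuc$, and $\fux=\fuox\cup\fucx$ is their union with the two points $0,c$ removed; it is a connected open set whose fundamental group at $p$ is free on $\gamma_0,\gamma_c$. Indeed, the very fact that $\xi_c$ is a \emph{single} homology class mixing $\gamma_0$-terms and a $\gamma_c$-term, glued by a boundary relation at $p$, already tells you the space cannot be a disjoint union --- in a disjoint union those terms would live in separate summands and could not interact. So there is no splitting $H_1(\fux,-)\cong H_1(\fuox,-)\oplus H_1(\fucx,-)$ of the type you assert.

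Second, even granting a Mayer--Vietoris-type decomposition, your monodromy action around $c$ is wrong: the rank-one connection ``$t^s$'' has monodromy $e^{2\pi\ay s}$ only around $t=0$ (where $\log t$ is multivalued); around $t=c\neq 0$ the function $t^s$ is single-valued, so $\gamma_c$ acts on $\MM(s)^{\vee}_{\QQ,p}$ by $T_c\otimes 1$, not $T_c\otimes e^{2\pi\ay s}$. This changes your kernel computation on the $\fucx$ side entirely.

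The paper takes a different route that explains why the hypothesis of \emph{strong} conifold monodromy is invoked. Because $\mc$ extends holomorphically across $c$ (Proposition~\ref{P6.1}(v)) and the family has a smooth total space over $\fu$ with only a nodal fiber at $c$, the form $\mc\otimes\tfrac{dt}{t}$ is smooth on $\cx_{\fu}$; hence the pairing $\xi\mapsto\Gamma_{\xi,\mc}$ factors through the \emph{intersection} homology $\IH_1(\fu,\MM(s)^{\vee}_{\QQ})$. One then computes $\rk\,\IH^1(\fu,\MM(s)_{\QQ})=1$ by Euler--Poincar\'e (using that $\MM(s)^{T_c}$ has rank $r-1$ and $\chi(\fu)=0$), and finally checks $\xi_c\neq 0$ in $\IH_1$ by mapping injectively into $H_0(\{p\},\MM(s)^{\vee}_{\QQ})$ via a relative-homology argument, where $\xi_c$ lands on $-\delta\otimes P(e^{2\pi\ay s})\neq 0$. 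The passage to $\IH$ is precisely what kills the ``extra'' classes in $H_1(\fux,-)$ that your direct computation would have to contend with; your approach, even repaired, would need a separate argument identifying the kernel of $\xi\mapsto\Gamma_{\xi,\mc}$ on ordinary $H_1$.
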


\begin{proof}
By Proposition \ref{P6.1}(v), $\mc$ is a holomorphic section of $\F_e^n$ on $\fu$ (actually on $\fuo\cup \fuc$). Let $\cx_{\fu} \to \fu$ be the extension of $f^{-1}(\fux)\to \fux$ guaranteed by strong conifold monodromy, with nodal fiber over $c$; then $\mc\otimes \tfrac{dt}{t}$ belongs to $\Omega^{n+1}(\cx_{\fu})$, and so its pairing with $H_1(\fux,\MM(s)_{\QQ}^{\vee})$ factors through $\IH_1(\fu,\MM(s)_{\QQ}^{\vee})$. Since $H^0(\fu,\MM(s)_{\QQ})=\{0\}$, Euler-Poincar\'e says that the rank of $\IH^1(\fu,\MM(s)_{\QQ})$ (and its dual) is $(r-\rk(\MM(s)^{T_c}))-r \chi(\fu) = 1-0 =1$. Similarly, $\IH_1(\fuc,\MM(s)^{\vee}_{\QQ})=\{0\}$ and so $\IH_1(\fu,\MM(s)^{\vee}_{\QQ})\hookrightarrow \IH_1(\fu,\fuc;\MM(s)^{\vee}_{\QQ})\cong H_1(\fuox,\{p\};\MM(s)^{\vee}_{\QQ}) \overset{T_0-I}{\underset{\cong}{\to}}H_0(\{p\},\MM(s)_{\QQ}^{\vee})$ (where $T_0-I$ is an isomorphism thanks to the action on $t^s$).  The image of $\xi_c$ under the whole composition is just $\partial(\sum \lambda_m \gamma_0^m \otimes \delta \otimes e^{2\pi\ay m s})=-\delta\otimes P(e^{2\pi\ay s})$, which is certainly nonzero.
\end{proof}

\begin{rem}\label{R6.2}
Under the same hypothesis, for $\Re(s)>0$ we have that $\Gamma_c(s)=-P(e^{2\pi\ay s})\int_0^c \tfrac{\psi(t)}{p(t)}t^{s-1} dt$ \cite[Prop. 15]{BV}. However this is not particularly useful for computing the derivatives of $\Gamma_c$ at $s=0$ (which interest us below), since the corresponding integrals do not converge.  See Example \ref{E6.1} below for a small but amusing exception.
\end{rem}

\section{Gamma = kappa}\label{S6b}

Our main objective in this section is to present Theorem 30 of \cite{BV} in a more precise form that accounts for the self-duality of $\M$, relating the conifold Gamma for $\M$ to the kappa series for $L$.  The proof is similar to that in [op. cit.], but with sufficiently many changes that we summarize it here.

Let $\{\rho_i\}_{i=0}^n \subset \M(U)$ be the dual basis of $\{D^j (\tfrac{e_n}{q_0})\}_{j=0}^n$. Arguing as in \S\ref{S6a} (for $e_n$), $\rho_n$ belongs to $\F^n$ hence equals $F\mu$ for some $F\in \CC(t)^{\times}$.  To find it, write 
\begin{align*}
1&= \langle D^n(\tfrac{e_n}{q_0}),\rho_n\rangle = Q(D^n\tfrac{\mu}{q_0 Y},\rho_n) = \tfrac{(-1)^n}{q_0 Y}Q(\rho_n,D^n\mu) \\
&=\tfrac{(-1)^n F}{q_0 Y} Q(\mu,D^n\mu) = \tfrac{(-1)^n F}{q_0} \;\;\;\implies\;\;\;\rho_n=(-1)^n q_0\mu.
\end{align*}
Next, write $L^{\dagger}=\sum_{j=0}^r p_{r-j}(t) D^j$ (where $p_0=q_0$), and define 
\begin{align*}
\eta&\colon \co^{\text{an}} \to \M^{\vee,\text{an}} \;\;\;\text{by} \;\;\;\;\;\eta(\phi):=\textstyle{\sum}_{i=0}^n (D^i \phi)e_i\;\;\;\;\;\;\;\text{and}\\
\chi&\colon \co^{\text{an}} \to \M^{\text{an}} \;\;\;\;\;\text{by} \;\;\;\;\;\chi(\theta):=\tfrac{(-1)^n}{Y(0)}\textstyle{\sum}_{i=0}^n (D^i \tfrac{\theta}{p})\rho_i.
\end{align*}
Using $De_i+e_{i-1}=\tfrac{q_{r-i}}{q_0}e_n$ and (dually) $D\rho_i+\rho_{i-1}=\tfrac{p_{r-i}}{q_0}\rho_n = (-1)^n p_{r-i}\mu$, one easily computes that
$$D(\eta(\phi))=(L\phi)\tfrac{e_n}{q_0}\;\;\;\;\;\text{and}\;\;\;\;\; D(\chi(\theta))=(L^{\dagger}\tfrac{\theta}{p})\tfrac{\mu}{Y(0)}=(L\theta)\tfrac{\mu}{Y(0)p}.$$
Defining the bracket
$$[\;\;,\;\;]\colon \co^{\text{an}}\times \co^{\text{an}}\to \co^{\text{an}}\;\;\;\;\;\text{by}\;\;\;\;\;[\phi,\theta]:=\langle \eta(\phi),\chi(\theta)\rangle\,,$$
we have the crucial

\begin{lem}\label{L6.1}
\textup{(i)} $D[\phi,\theta]=\tfrac{1}{Y(0)p}\left\{\phi L\theta + (-1)^n \theta L \phi\right\}$.

\textup{(ii)} If $\alpha,\beta$ are local sections of $\MM_{\CC}^{\vee}$, with periods $\pi_{\alpha}=\langle \alpha,\mu\rangle$ and $\pi_{\beta}=\langle\beta,\mu\rangle$, then $\eta(\pi_{\alpha})=\alpha$, $Q(\chi(\pi_{\beta}))=\beta$, and $[\pi_{\alpha},\pi_{\beta}]=Q(\alpha,\beta)$.
\end{lem}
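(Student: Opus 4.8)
The plan is to derive (i) from a single application of the Leibniz rule together with two dual-basis evaluations, and then to read (ii) as a ``period reconstruction'' statement. The two parts are essentially independent; I will do (i) first.

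\emph{Part (i).} Since the dual connection is compatible with $\langle\cdot,\cdot\rangle$, and using the formulas $D(\eta(\phi))=(L\phi)\tfrac{e_n}{q_0}$ and $D(\chi(\theta))=(L\theta)\tfrac{\mu}{Y(0)p}$ recorded just before the Lemma,
$$D[\phi,\theta]=\langle D\eta(\phi),\chi(\theta)\rangle+\langle\eta(\phi),D\chi(\theta)\rangle=(L\phi)\langle\tfrac{e_n}{q_0},\chi(\theta)\rangle+(L\theta)\tfrac{1}{Y(0)p}\langle\eta(\phi),\mu\rangle.$$
Both pairings are purely formal. Because $\{\rho_i\}$ is dual to $\{D^j\tfrac{e_n}{q_0}\}$, the pairing $\langle\tfrac{e_n}{q_0},-\rangle$ extracts the $\rho_0$-coefficient, so $\langle\tfrac{e_n}{q_0},\chi(\theta)\rangle=\tfrac{(-1)^n}{Y(0)}\cdot\tfrac{\theta}{p}$; and because $\{e_i\}$ is dual to $\{D^j\mu\}$ we have $\langle e_i,\mu\rangle=\bm{\delta}_{i0}$, so $\langle\eta(\phi),\mu\rangle=\phi$. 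Substituting gives exactly $D[\phi,\theta]=\tfrac{1}{Y(0)p}\{\phi L\theta+(-1)^n\theta L\phi\}$.

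\emph{Part (ii).} For flat $\alpha$ one has $D^i\pi_\alpha=D^i\langle\alpha,\mu\rangle=\langle\alpha,D^i\mu\rangle$, and since $\{e_i\}$ is dual to $\{D^j\mu\}$ this says $\eta(\pi_\alpha)=\sum_i(D^i\pi_\alpha)e_i=\alpha$. For $\chi$, I would first rewrite the dual cyclic vector: from $e_n=\tfrac{Q(\mu)}{Y}$ and $q_0Y=Y(0)p$ (Prop.~\ref{P6.1}(iii)) we get $\tfrac{e_n}{q_0}=\tfrac{1}{Y(0)}Q(\tfrac{\mu}{p})$, hence $D^i\tfrac{e_n}{q_0}=\tfrac{1}{Y(0)}Q(D^i\tfrac{\mu}{p})$ since $Q$ is flat. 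Given flat $\beta$, set $\sigma:=Q^{-1}(\beta)\in\MM_{\CC}$ (also flat); then
$$\langle D^i\tfrac{e_n}{q_0},\sigma\rangle=D^i\langle\tfrac{e_n}{q_0},\sigma\rangle=\tfrac{1}{Y(0)}D^iQ(\tfrac{\mu}{p},\sigma)=\tfrac{(-1)^n}{Y(0)}D^i(\tfrac{\pi_\beta}{p}),$$
where the last step uses the $(-1)^n$-symmetry of the polarization and $Q(\sigma,\mu)=\langle\beta,\mu\rangle=\pi_\beta$. Since $\{\rho_i\}$ is dual to $\{D^j\tfrac{e_n}{q_0}\}$, these are precisely the coefficients of $\sigma$, so $\chi(\pi_\beta)=\tfrac{(-1)^n}{Y(0)}\sum_i D^i(\tfrac{\pi_\beta}{p})\rho_i=\sigma$, i.e. $Q(\chi(\pi_\beta))=\beta$. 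Finally $[\pi_\alpha,\pi_\beta]=\langle\eta(\pi_\alpha),\chi(\pi_\beta)\rangle=\langle\alpha,Q^{-1}(\beta)\rangle=Q(\alpha,\beta)$ by the definition of the polarization on $\M^{\vee}$.

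The one point that demands care — and the ``main obstacle'', such as it is — is consistent bookkeeping of the sign $(-1)^n$: it enters through $\rho_n=(-1)^nq_0\mu$ and the $(-1)^n$-symmetry of $Q$, and must reappear in both displayed conclusions. Everything else is a matter of unwinding the dual-basis relations and invoking the flatness of $Q$ and of $\alpha,\beta,\sigma$ to commute $\nabla_D$ (and $Q^{-1}$) past the relevant operations; the identities $D\eta(\phi)=(L\phi)\tfrac{e_n}{q_0}$ and $D\chi(\theta)=(L\theta)\tfrac{\mu}{Y(0)p}$ already in hand are exactly what license these manipulations on $\co^{\mathrm{an}}$.
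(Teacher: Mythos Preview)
Your proof is correct. Part (i) is identical to the paper's argument. For part (ii) there is a minor but genuine difference in route: the paper first observes that $L\pi_{\alpha}=L\pi_{\beta}=0$ forces $D(\eta(\pi_{\alpha}))=D(\chi(\pi_{\beta}))=0$, so that $\eta(\pi_{\alpha})$ and $Q(\chi(\pi_{\beta}))$ are \emph{flat} sections of $\MM_{\CC}^{\vee}$, and then identifies which flat sections they are by computing the single period $\langle\,\cdot\,,\mu\rangle$ (using that a flat section is determined by its $\mu$-period since $\mu$ is cyclic). You instead expand $\alpha$ and $\sigma=Q^{-1}(\beta)$ directly in the dual bases $\{e_i\}$ and $\{\rho_i\}$ and recognize the coefficients as $D^i\pi_{\alpha}$ and $\tfrac{(-1)^n}{Y(0)}D^i(\pi_{\beta}/p)$. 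Your approach is slightly more hands-on and avoids the flatness step; the paper's approach is a touch more conceptual in that it isolates the single test vector $\mu$. Both are short and both work.
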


\begin{proof}
(i) follows immediately from $\langle D\eta(\phi),\chi(\theta)\rangle+\langle \eta(\phi),D\chi(\theta)\rangle=\tfrac{(-1)^n L\phi}{Y(0)}\sum_{i=0}^n (D^i\tfrac{\theta}{p})\langle \tfrac{e_n}{q_0},\rho_i\rangle+ \tfrac{L\theta}{Y(0)p}\sum_{i=0}^n (D^i\phi)\langle e_i,\mu\rangle$, since $\langle\tfrac{e_n}{q_0},\rho_i\rangle =\bm{\delta}_{i0}=\langle e_i,\mu\rangle$.  For (ii), notice that $L\pi_{\alpha}=0=L\pi_{\beta}$ $\implies$ $D(\eta(\pi_{\alpha}))=0=D(\chi(\pi_{\beta}))$ $\implies$ $\eta(\pi_{\alpha})$ and $Q(\chi(\pi_{\beta}))$ are sections of $\MM_{\CC}^{\vee}$.  To see \emph{which} sections, we pair them with $\mu$:  $\langle \eta(\pi_{\alpha}),\mu\rangle =\sum_{i=0}^n (D^i \pi_{\alpha})\langle e_i,\mu\rangle  = \pi_{\alpha}$; and $\langle Q(\chi(\pi_{\beta})),\mu\rangle = (-1)^n \langle Q(\mu),\chi(\pi_{\beta})\rangle = (-1)^n Y q_0 \langle \tfrac{e_n}{q_0},\chi(\pi_{\beta})\rangle = \pi_{\beta}$.  Hence $\langle \eta(\pi_{\alpha}),\chi(\pi_{\beta})\rangle = \langle \alpha, Q^{-1}(\beta)\rangle = Q(\alpha,\beta)$.
\end{proof}

\begin{thm}\label{T6.1}
In the setting of \S\ref{S3}, $$\kappa(s)=\frac{\mathsf{Q}_0}{\mathsf{Q}_c}\frac{(2\pi\ay)^n s^r}{(1-e^{2\pi\ay s})^r}\Gamma_c(s).$$ 
\end{thm}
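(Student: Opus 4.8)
The plan is to compute $\Gamma_c(s)$ head on, reducing each integral in its definition to a boundary term via the bracket $[\,\cdot\,,\,\cdot\,]$ of Lemma~\ref{L6.1}, and then to solve the resulting identity for $\kappa(s)$. Put $A:=\phi_0=\langle\ve_0,\mu\rangle$ and recall $\psi=\langle\delta,\mu\rangle$; both are annihilated by $L$, and by Lemma~\ref{L6.1}(ii) one has $\eta(A)=\ve_0$, $\chi(A)=Q^{-1}(\ve_0)$, $\eta(\psi)=\delta$, $\chi(\psi)=Q^{-1}(\delta)$. I would introduce the two multivalued functions
\[
F:=[\Phi,A]=\langle\eta(\Phi),\chi(A)\rangle,\qquad
F':=[\Phi,\psi]=\langle\eta(\Phi),\chi(\psi)\rangle,
\]
defined near $0$ and near $c$ (the bracket formalism of \S\ref{S6a} being available near $c$ thanks to Proposition~\ref{P6.1}(v)). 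Since $L\Phi=s^{r}t^{s}$ while $LA=0=L\psi$, Lemma~\ref{L6.1}(i) gives $DF=\tfrac{(-1)^{n}s^{r}}{Y(0)}\langle\ve_0,\mc\rangle t^{s}$ and $DF'=\tfrac{(-1)^{n}s^{r}}{Y(0)}\langle\delta,\mc\rangle t^{s}$ (recalling $\mc=\tfrac1p\mu$). Hence, after unwinding Definition~\ref{D2.1} as $\Gamma_c(s)=\sum_{m}\lambda_m e^{2\pi\ay ms}\!\int_{\gamma_0^{-m}}\!\langle\delta,\mc\rangle t^{s}\tfrac{dt}{t}+P(e^{2\pi\ay s})\!\int_{\gamma_c}\!\langle\ve_0,\mc\rangle t^{s}\tfrac{dt}{t}$, the two species of integrand appearing in $\Gamma_c(s)$ are, up to the common constant $\tfrac{(-1)^n s^r}{Y(0)}$, total $D$-derivatives of $F'$ and $F$ respectively.

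For the $\gamma_0$-sum I would use $\int_{\gamma_0^{-m}}Dg\,\tfrac{dt}{t}=\int_{\gamma_0^{-m}}dg=((T_0^{-m}-I)g)(p)$ together with $\sum_m\lambda_m x^m=P(x)=(x-1)^{r}$ to rewrite it as $\tfrac{Y(0)}{(-1)^n s^r}\big[(e^{2\pi\ay s}T_0^{-1}-I)^{r}-(e^{2\pi\ay s}-1)^{r}\big]F'(p)$. The crucial step is then a monodromy computation at $t=0$: since $\eta$ commutes with analytic continuation and $T_0\Phi=e^{2\pi\ay s}\Phi$, while $\chi(\psi)=Q^{-1}(\delta)$ is flat and $T_0=e^{N_0}$, one gets $(e^{2\pi\ay s}T_0^{-1})^{k}F'=\langle\eta(\Phi),Q^{-1}(e^{-kN_0}\delta)\rangle$, whence
\[
(e^{2\pi\ay s}T_0^{-1}-I)^{r}F'=\big\langle\eta(\Phi),\,Q^{-1}\big((e^{-N_0}-I)^{r}\delta\big)\big\rangle=0
\]
because $N_0^{\,n+1}=N_0^{\,r}=0$. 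So the $\gamma_0$-sum equals $-\tfrac{Y(0)(e^{2\pi\ay s}-1)^{r}}{(-1)^n s^r}F'(p)$.

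For the $\gamma_c$-term I would compute $(T_c-I)F$: from $T_c\Phi=\Phi+\kappa(s)\psi$ (Definition~\ref{D5.1}) and $T_c\ve_0=\ve_0+\delta$ one gets $T_c\eta(\Phi)=\eta(\Phi)+\kappa(s)\delta$ and $T_c\chi(A)=Q^{-1}(\ve_0+\delta)$, so expanding the pairing and applying Lemma~\ref{L6.1}(ii),
\[
(T_c-I)F=F'+\kappa(s)\big(Q(\delta,\ve_0)+Q(\delta,\delta)\big)=F'-\kappa(s)\,\mathsf{Q}_c,
\]
the last equality because $Q(\delta,\delta)=0$ for $n$ odd while for $n$ even the normalization $(T_c-I)\ve_0=\delta$ forces the reflection relation $Q(\delta,\delta)=-2\,Q(\ve_0,\delta)$, so in both parities $Q(\delta,\ve_0)+Q(\delta,\delta)=-Q(\ve_0,\delta)=-\mathsf{Q}_c$. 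Hence the $\gamma_c$-term equals $\tfrac{(e^{2\pi\ay s}-1)^{r}Y(0)}{(-1)^n s^r}\big(F'(p)-\kappa(s)\mathsf{Q}_c\big)$. Adding the two contributions, the $F'(p)$ terms cancel and $\Gamma_c(s)=-\tfrac{(e^{2\pi\ay s}-1)^{r}Y(0)\mathsf{Q}_c}{(-1)^n s^r}\kappa(s)$; substituting $Y(0)=\tfrac{1}{(2\pi\ay)^n\mathsf{Q}_0}$ from Proposition~\ref{P6.1}(i) and $-(e^{2\pi\ay s}-1)^{r}=(-1)^n(1-e^{2\pi\ay s})^{r}$ (valid since $r=n+1$), then solving for $\kappa(s)$, yields exactly the stated formula.

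The step I expect to be the main obstacle is the monodromy bookkeeping at $t=0$ that produces $(e^{2\pi\ay s}T_0^{-1}-I)^{r}F'=0$: this is precisely where the choice $P(x)=(x-1)^{r}$ in the definition of $\xi_c$ is forced, matching the degree $r$ of $P$ to the nilpotency index of $N_0$. The only other delicate point is reconciling the signs at $t=c$ across the two parities of $n$ --- handled uniformly by the transvection/reflection structure of $T_c$ --- together with the preliminary check that $\mc$, and hence $F$ and $F'$, is holomorphic near $c$ so that the $\gamma_c$-integral and the bracket computations are legitimate there.
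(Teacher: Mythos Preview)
Your proof is correct and follows essentially the same strategy as the paper's: use Lemma~\ref{L6.1}(i) to rewrite the integrands in $\Gamma_c(s)$ as $D$-derivatives of brackets, apply the FTC, exploit $N_0^{\,r}=0$ to kill the $\gamma_0$-contribution, and compute the $(T_c-I)$-term to extract $\kappa(s)$. The only cosmetic differences are that the paper places $\Phi$ in the \emph{second} slot of the bracket (packaging the whole computation as evaluating a single quantity $\mathscr{G}(s)$ two ways, without the stray $(-1)^n$) and dispatches the sign via $T_c$-invariance of $Q$, namely $Q(\ve_0+\delta,\delta)=Q(T_c\ve_0,(-1)^{n+1}T_c\delta)=(-1)^{r}\mathsf{Q}_c$, rather than by your parity case-split.
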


\begin{proof}
Rewriting our representative of $\xi_c$ in the form $\sum_j \gamma_j \otimes \ve_j \otimes e^{2\pi\ay n_j s}$, we compute $$\mathscr{G}(s):=\sum_j e^{2\pi\ay n_j s}\int_{\gamma_j^{-1}}D[\e_j,\Phi]\frac{dt}{t}$$ in two different ways.  First, since $L\Phi=s^r t^s$ and $L\e_j=0$, $$D[\e_j,\Phi]=\frac{\e_j  s^r t^s}{Y(0)p}=(2\pi\ay)^n \mathsf{Q}_0 s^r \langle \ve_j ,\mc\rangle t^s$$ by Lemma \ref{L6.1}(i) and $\mathscr{G}(s)=(2\pi\ay)^n \mathsf{Q}_0 s^r \Gamma_c(s)$.  Second, by the FTC
\begin{align*}
\mathscr{G}(s) &= \textstyle{\sum}_j e^{2\pi\ay n_j s}(\gamma_j^{-1}-1)[\e_j,\Phi]\\
&=\textstyle{\sum}_m \lambda_m e^{2\pi\ay m s}(\gamma_0^{-m}-1)[\psi,\Phi]+P(e^{2\pi\ay s})(\gamma_c -1)[\e_0,\Phi]\\
&= \textstyle{\sum}_m\lambda_m e^{2\pi\ay ms}([\gamma_0^{-m}\psi,e^{-2\pi\ay m s}\Phi]-[\psi,\Phi])\\&\mspace{200mu}+P(e^{2\pi\ay s})([\e_0+\psi,\Phi+\kappa\psi]-[\e_0,\Phi]) \\
&= [P(\gamma_0^{-1})\psi,\Phi]-P(e^{2\pi\ay s})[\psi,\Phi]\\&\mspace{200mu}+P(e^{2\pi\ay s})[\psi,\Phi]+P(e^{2\pi\ay s})\kappa [\e_0+\psi,\psi]\\
&=P(e^{2\pi\ay s})\kappa(s)[\e_0+\psi,\psi]
\end{align*}
since $P(\gamma_0^{-1})=0$ on $\MM_{\QQ,p}^{\vee}$.  By Lemma \ref{L6.1}(ii), we have $[\e_0,+\psi,\psi]=Q(\ve_0+\delta,\delta)=Q(T_c\ve_0,(-1)^{n+1}T_c\delta)=(-1)^{n+1}Q(\ve_0,\delta)=(-1)^r \mathsf{Q}_c$.
\end{proof}

\begin{example}\label{E6.1}
Here is the simplest real example: let $\X\to \PP^1$ be the family of ``CY $0$-folds'' arising as in Example \ref{E3.1} from $\vf=-x+2-x^{-1}$, and $\M$ its reduced fiberwise $H^0$.  We have $L=D-4t(D+\tfrac{1}{2})=L^{\dagger}$, $\mathsf{Q}_0=2$, $\mathsf{Q}_c=-4$,  $c=\tfrac{1}{4}$, $A(t)=(1-4t)^{-\frac{1}{2}}=-\tfrac{1}{2}\psi(t)$, and (from Remark \ref{R6.2})
$\Gamma_c(s)=2(e^{2\pi\ay s}-1)\int_0^c A(t)t^{s-1}dt$. Applying Theorem \ref{T6.1} gives $\kappa(s)=s\int_0^{\frac{1}{4}}\tfrac{t^{s-1}dt}{\sqrt{1-4t}}=4^{-s} s \mathrm{B}(s,\tfrac{1}{2})=\tfrac{\Gamma(1+s)^2}{\Gamma(1+2s)}=\exp\left(2\sum_{k\geq 2} \tfrac{(-1)^{k-1}}{k}(2^{k-1}-1)\zeta(k)s^k\right)$.
\end{example}

\begin{cor}\label{C6.0}
Writing $L^{\dagger}=\sum_{i=0}^d t^i Q_i(D)$, the difference equation $\sum_{k=0}^d \tfrac{Q_k(-s-k)}{(s+k)^r}\kappa(s+k)=0$ holds.
\end{cor}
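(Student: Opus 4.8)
The plan is to combine two facts already in hand: Theorem~\ref{T2.1}, which gives the difference equation satisfied by any Bloch--Vlasenko Gamma function for $(\M,\mm)$ with respect to the operator annihilating $\mm$, and Theorem~\ref{T6.1}, which expresses $\kappa(s)$ as an explicit elementary multiple of $\Gamma_c(s)$. The key observation is that $\mc=\tfrac1p\mu$ is annihilated not by $L$ but by the adjoint operator $L^{\dagger}=\sum_{k=0}^d t^k Q_k(D)$ (Proposition~\ref{P6.1}(iv) together with the definition of $\mc$ in \S\ref{S6a}), and $\Gamma_c(s)=\Gamma_{\xi_c,\mc}(s)$ by Definition~\ref{D6.1}. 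So Theorem~\ref{T2.1}, applied with the operator $L^{\dagger}$ in place of $L$ and the section $\mc$ in place of $\mu$, gives directly
\[
\sum_{k=0}^d Q_k(-s-k)\,\Gamma_c(s+k)=0.
\]

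First I would record that the hypotheses of Theorem~\ref{T2.1} are met: $\xi_c\in H_1(U,\MM(s)^{\vee}_{\QQ})$ is a genuine homology class (the boundary computation right after the definition of $\xi_c$ shows $\partial\xi_c=0$), $\mc\in H^0(U,\F^n\M)\subset\M(U)$, and $L^{\dagger}\mc=0$; the difference equation in Theorem~\ref{T2.1} was proved for an \emph{arbitrary} cyclic vector and its minimal operator, and the proof only uses $L^{\dagger}\mc=0$ and the two formal identities $\Gamma_{\xi,\nabla_D\mm}(s)=-s\,\Gamma_{\xi,\mm}(s)$ and $\Gamma_{\xi,t\mm}(s)=\Gamma_{\xi,\mm}(s+1)$, so nothing about minimality or degree is actually needed. (Strictly, $L^{\dagger}$ need not be the \emph{minimal} operator killing $\mc$, but that is irrelevant: if $L^{\dagger}\mc=0$ then the displayed vanishing holds, since the proof of Theorem~\ref{T2.1} only invokes $L^{\dagger}\mc=0$.)

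Next I would substitute Theorem~\ref{T6.1}. Writing $\Gamma_c(s)=\tfrac{\mathsf{Q}_c}{\mathsf{Q}_0}\tfrac{(1-e^{2\pi\ay s})^r}{(2\pi\ay)^n s^r}\,\kappa(s)$, and noting that shifting $s\mapsto s+k$ sends $e^{2\pi\ay s}\mapsto e^{2\pi\ay s}$, hence leaves the factor $(1-e^{2\pi\ay s})^r$ unchanged, the common nonzero constant $\tfrac{\mathsf{Q}_c}{\mathsf{Q}_0}\tfrac{(1-e^{2\pi\ay s})^r}{(2\pi\ay)^n}$ factors out of the sum $\sum_k Q_k(-s-k)\Gamma_c(s+k)=0$, and what remains is exactly $\sum_{k=0}^d \tfrac{Q_k(-s-k)}{(s+k)^r}\,\kappa(s+k)=0$, as claimed.

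The only genuinely delicate point is bookkeeping at the poles: $\kappa(s)$ is meromorphic with poles at negative integers and the factor $(s+k)^{-r}$ introduces apparent poles, so the identity should be read as an identity of meromorphic functions on $\CC$ (equivalently, it holds on the dense open set where all terms are finite, hence everywhere by continuation). This is harmless, but I would state it explicitly rather than pretend $\kappa$ is entire. I do not anticipate any serious obstacle here; the content is entirely in Theorems~\ref{T2.1} and~\ref{T6.1}, and the Corollary is a one-line consequence once one remembers that $\mc$, not $\mu$, is the cyclic vector adapted to $L^{\dagger}$ and to $\Gamma_c$.
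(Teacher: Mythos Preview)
Your proposal is correct and follows essentially the same route as the paper's one-line proof (``Divide Theorem~\ref{T6.1} by $s^r$ and apply Theorem~\ref{T2.1}''): apply the difference equation of Theorem~\ref{T2.1} to $\Gamma_c=\Gamma_{\xi_c,\mc}$ with respect to $L^{\dagger}$ (since $L^{\dagger}\mc=0$), then substitute $\Gamma_c(s+k)=\tfrac{\mathsf{Q}_c}{\mathsf{Q}_0}\tfrac{(1-e^{2\pi\ay s})^r}{(2\pi\ay)^n(s+k)^r}\kappa(s+k)$ from Theorem~\ref{T6.1} and cancel the common nonzero factor. Your remarks that Theorem~\ref{T2.1} only uses $L^{\dagger}\mc=0$ (not minimality) and that the resulting identity should be read as one of meromorphic functions are apt clarifications, but do not depart from the paper's argument.
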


\begin{proof}
Divide Theorem \ref{T6.1} by $s^r$ and apply Theorem \ref{T2.1}.
\end{proof}

\begin{cor}\label{C6.1}
We have $\Gamma_c(0)=(-1)^r \tfrac{\mathsf{Q}_c}{\mathsf{Q}_0}2\pi\ay$; and for $m\in\ZZ_{>0}$, $\Gamma_c(m)=0$, $\Gamma_c(-m)=\Gamma_c(0)\tilde{a}_m$, and $\kappa(s)\sim \tfrac{(-m)^r}{(s+m)^r}\tilde{a}_m$ at $s=-m$.
\end{cor}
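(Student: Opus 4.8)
The plan is to deduce all four assertions from Theorem~\ref{T6.1}, which reads $\kappa(s)=\tfrac{\mathsf{Q}_0}{\mathsf{Q}_c}\tfrac{(2\pi\ay)^n s^r}{(1-e^{2\pi\ay s})^r}\Gamma_c(s)$, together with the difference equation of Corollary~\ref{C6.0} --- equivalently, Theorem~\ref{T2.1} applied to the pair $(L^{\dagger},\mc)$, which makes sense since $\mc$ is annihilated by $L^{\dagger}$ --- and the recurrence (as in Remark~\ref{R2.2}, but for $L^{\dagger}$) satisfied by the power-series coefficients $\{\tilde{a}_m\}$ of $\tilde{A}=\langle\ve_0,\mc\rangle$.

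For $\Gamma_c(0)$: rewrite Theorem~\ref{T6.1} as $\Gamma_c(s)=\tfrac{\mathsf{Q}_c}{\mathsf{Q}_0}\tfrac{(1-e^{2\pi\ay s})^r}{(2\pi\ay)^n s^r}\kappa(s)$ and let $s\to 0$. Since $\kappa(0)=\kappa_0=1$ and $1-e^{2\pi\ay s}=-2\pi\ay s+O(s^2)$, the ratio $\tfrac{(1-e^{2\pi\ay s})^r}{s^r}$ tends to $(-2\pi\ay)^r$; with $r=n+1$ this gives $\Gamma_c(0)=(-1)^r\tfrac{\mathsf{Q}_c}{\mathsf{Q}_0}(2\pi\ay)$. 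For $m\in\ZZ_{>0}$, the function $\kappa$ is analytic off $\ZZ_{<0}$, so $\kappa(m)$ is finite, while $s^r=m^r\neq 0$ and $(1-e^{2\pi\ay m})^r=0$; hence $\Gamma_c(m)=0$.

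For $\Gamma_c(-m)$, I would run the bookkeeping of Remark~\ref{R2.2} with $(L,\mu)$ replaced by $(L^{\dagger},\mc)$. Since $1/p$ is regular and nonvanishing at $0$, the section $\mc$ inherits maximal unipotent monodromy there, so $L^{\dagger}=\sum_{k}t^k Q_k(D)$ has indicial polynomial $Q_0(D)=D^r$, just like $L$. Expanding $L^{\dagger}\tilde{A}=0$ yields the recurrence $m^r\tilde{a}_m+\sum_{k=1}^{\min(m,d)}Q_k(m-k)\tilde{a}_{m-k}=0$ with $\tilde{a}_0=1$, while setting $s=-m$ in $\sum_k Q_k(-s-k)\Gamma_c(s+k)=0$ and using $\Gamma_c(\ell)=0$ for $\ell\in\ZZ_{>0}$ yields $m^r\Gamma_c(-m)+\sum_{k=1}^{\min(m,d)}Q_k(m-k)\Gamma_c(-(m-k))=0$. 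These two recurrences have identical shape, $m^r\neq0$ for $m\geq1$, and the seeds match since $\Gamma_c(0)=\Gamma_c(0)\,\tilde{a}_0$; induction then gives $\Gamma_c(-m)=\Gamma_c(0)\,\tilde{a}_m$.

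Finally, the leading Laurent term of $\kappa$ at $s=-m$ follows by substituting back into Theorem~\ref{T6.1}: $\Gamma_c$ is entire, so $\Gamma_c(s)\to\Gamma_c(-m)=\Gamma_c(0)\,\tilde{a}_m$; near $s=-m$ we have $1-e^{2\pi\ay s}=-2\pi\ay(s+m)+O((s+m)^2)$ and $s^r\to(-m)^r$, so $\kappa(s)\sim\tfrac{\mathsf{Q}_0}{\mathsf{Q}_c}\tfrac{(2\pi\ay)^n(-m)^r}{(-2\pi\ay)^r(s+m)^r}\,\Gamma_c(0)\,\tilde{a}_m$; inserting the value of $\Gamma_c(0)$ and simplifying (again $r=n+1$) collapses this to $\kappa(s)\sim\tfrac{(-m)^r}{(s+m)^r}\tilde{a}_m$. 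The step that needs the most care is the third one: one must verify that $Q_0(D)=D^r$ (equivalently, that maximal unipotent monodromy at $0$ survives the twist by $1/p$), so that the difference equation of Corollary~\ref{C6.0} really is the Frobenius recurrence for $\{\tilde{a}_m\}$, and that the values at nonnegative integers produced by the first two parts are precisely its initial data --- exactly the content of Remark~\ref{R2.2}, transplanted to $L^{\dagger}$ and $\mc$.
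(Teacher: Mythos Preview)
Your proof is correct and follows exactly the approach indicated in the paper: combine Theorem~\ref{T6.1} with Theorem~\ref{T2.1} and Remark~\ref{R2.2} applied to $(L^{\dagger},\mc)$. Your check that $Q_0(D)=D^r$ (via $p(0)=1$, or equivalently via the adjoint formula) is the only detail the paper leaves implicit, and you have supplied it.
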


\begin{proof}
In addition to Theorem \ref{T6.1}, use Theorem \ref{T2.1} and Remark \ref{R2.2} (applied to $(L^{\dagger},\mc)$).
\end{proof}

The remarks that follow address the implications of Theorem \ref{T6.1} for the LMHS of $\M$ at $0$, whose periods turn out to be given by derivatives of (a variant of) the conifold Gamma at $s=0$.

\begin{rem}\label{R6.3}
Replacing $L$ by $L^{\dagger}$, $\mu$ by $\mc$, and $\psi$ by $\psi^{\dagger}:=\langle \delta,\mc\rangle$, we may define $\Phi^{\dagger}$ and $\kappa^{\dagger}$ as in Definitions \ref{D4.1} and \ref{D5.1}.  (Note that we are \emph{not} replacing $\M$ by $\M^{\vee}$.)  Then Theorem \ref{T5.1} remains true; and since $\mc(0)=\mu(0)$ in $\F^n_{e,0}$, we find that $\kappa_j^{\dagger}=\kappa_j$ for $j=0,\ldots,n$ (but not $j\geq r$).  Moreover, Theorem \ref{T6.1} and Corollary \ref{C6.1} hold replacing $\kappa$ by $\kappa^{\dagger}$, $\tilde{a}_m$ by $a_m$, and $\Gamma_c$ by $\Gamma_{\M}:=\Gamma_{\xi_c,\mu}$. (To see this, replace $[\e_j,\Phi]$ by $[\e_j,p\Phi^{\dagger}]$ in the proof.)  It follows that ${}^t (\kappa_0,\ldots,\kappa_n)$ is the product of a rational lower-triangular matrix by ${}^t (\tfrac{\Gamma_{\M}(0)}{2\pi\ay},\tfrac{\Gamma'_{\M}(0)}{(2\pi\ay)^2},\ldots,\tfrac{\Gamma_{\M}^{(n)}(0)}{(2\pi\ay)^{n+1}})$.

Now by Theorem \ref{T5.1}, ${}^t (\kappa_0,\ldots,\kappa_n)$ is the leading column of a period matrix for the dual of the LMHS of $\M$ at $0$.  As the LMHS of a polarized VHS is (up to twist) self-dual, we conclude that there exists a $\QQ$-basis $\{\mathpzc{e}_j\in W(N_0)_j\}_{j=0}^n$ of $\MM_{\QQ,p}$ such that $\mu(0)=\sum_{j=0}^n (2\pi\ay)^{-j-1}\Gamma^{(j)}_{\M}(0)\tilde{\mathpzc{e}}_j(0)$ in $\M_{e,0}$, where $\tilde{\mathpzc{e}}_j (t):=e^{-\frac{\log(t)}{2\pi\ay}N_0}\mathpzc{e}_j$.

Recall $\el(t):=\tfrac{\log(t)}{2\pi\ay}$.  Since $\tfrac{d^j}{dt^j} P(e^{2\pi\ay s})|_{s=0}=0$ for $j<r$, one finds that $$\frac{\Gamma_{\M}^{(j)}(0)}{(2\pi\ay)^{j+1}}=\sum_{m=0}^r \lambda_m \int_{\gamma_0^{-m}}\psi(t) \left(\el(t)+m\right)^j d\ell(t)\, ,$$ taking $\log(p)\in\RR$ at the start of each path.  As a formula for actually computing the LMHS this seems closely related to the ``Cauchy integral method'' in \cite[\S5]{dSKP}, though more unwieldy.  Rather, its importance is theoretical, as the next Remark demonstrates.
\end{rem}

\begin{rem}[Limiting motive]\label{R6.4}
The family of relative motives $L_t :=(\GG_m,\{1,t\})$ underlies the rank-$2$ connection $\D/\D D^2$ in $\GG_m$, with periods $1$ and $\el(t)$ over the cycles $S^1$ and $[1,t]$ in $H_1(L_t)$.  Write $\M[n]$ for the VMHS $\M\otimes \mathrm{Sym}^n H^1(L_t)$ on $U$, and $\Xi_j\in H_1(U^m,\MM[n]^{\vee}_{\QQ})$ for the class of the cycle $\sum_m \lambda_m \gamma_0^m\otimes \delta\otimes ([0,t]+mS^1)^j(S^1)^{n-j}$ (closed for $j<r$). Putting $\varpi :=\mu\otimes (\tfrac{dz_1}{2\pi\ay z_1}\wedge \cdots \wedge \tfrac{dz_n}{2\pi\ay z_n})\otimes \tfrac{dt}{2\pi\ay t}\in H^1_{\mathrm{dR}}(U,\M[n])$, for $0{\leq} j{\leq} n$ we recover $(2\pi\ay)^{-j-1}\Gamma_{\M}^{(j)}(0)$ as periods $\langle \Xi_j,\varpi\rangle$ of the connection.

These are also periods of a relative variety.  Inside our smooth total space $\X\overset{f}{\to}\PP^1$, consider $\X_{\mathbb{G}_m}:=f^{-1}(\mathbb{G}_m)$. Let $\DD[n]\subset \GG_m^n\times \GG_m$ be the divisor defined by $\prod_{i=1}^n (z_i-1)(z_i-t)$, and write $\mathrm{X}[n]:=\X_{\mathbb{G}_m}\times \GG_m^n$, $\mathrm{D}[n]:=\X_{\mathbb{G}_m}\times_{\GG_m}\DD[n]$, and $\mathrm{X}[n]_{\text{rel}}:=(\mathrm{X}[n],\mathrm{D}[n])$.  Then recalling that $\mu$ is a \emph{holomorphic} section of $\F_e^n$, we may regard $\Xi_j$ and $\varpi$ as classes in $H_{2n+1}(\mathrm{X}[n]_{\text{rel}},\QQ)$ and $F^{2n+1}H^{2n+1}(\mathrm{X}[n]_{\text{rel}},\CC)$ respectively.  A further refinement is obtained by observing that $\IH^1(\GG_m,\MM[n])$ yields a sub-MHS/motive of $H^{2n+1}(\mathrm{X}[n]_{\text{rel}})$, of which the $\langle \Xi_j,\varpi\rangle$ remain periods.

Now in general these are only some of the periods, not all of the periods, of this MHS.  (Alas, the part of $\mathrm{X}[n]_{\text{rel}}$ over $\fu$ is not a motive.)  But there \emph{is} a case in which the $\{\Xi_j\}$ \emph{span} $\IH_1(\GG_m,\MM[n])$, and that is when $|\Sigma^{\times}|=1$:  indeed, by Euler-Poincar\'e we find that $\rk(\IH^1(\GG_m,\MM[n]))=n+1$.  So in this ``hypergeometric'' case, we obtain a \emph{motive with Hodge realization equal to the LMHS of $\M$ at $t=0$}.

Naturally, we have left aside the messiness of constructing a log-resolution of $\left(\overline{\mathrm{X}[n]},\overline{\mathrm{D}[n]}\cup(\overline{\mathrm{X}[n]}\setminus\mathrm{X}[n])\right)$ and the required projectors, but it is clear that this can be done.  Moreover, despite various ``limiting motive'' constructions, this is the first of which we are aware \emph{with the desired Hodge realization} outside of the weight-one setting \cite{Hao}, further illustrating the power of the approach of Bloch and Vlasenko.
\end{rem}

\section{The unipotent extensions}\label{S7}

Closely related to the Frobenius deformation in \S\ref{S4} is an inverse limit of VMHSs whose periods are annihilated by $D^m L(\cdot)$ for some $m$ \cite[\S5]{BV}.  Our initial intention in this section was to investigate these VMHSs, but (given our choice of $\mu$ and thus $L$) it turns out to be more natural to consider $D^m L^{\dagger}$, essentially because the periods of \emph{its} adjoint $L D^m$ integrate the periods of $\mu$.  The warning here is that while $L$ and $L^{\dagger}$ define isomorphic $\D$-modules, $D^m L$ and $D^m L^{\dagger}$ do \emph{not} -- unless, of course, $L^{\dagger}=L$.

Fix $m\in \ZZ_{> 0}$, and consider the connection
$$0\to \K\to \E\overset{\pi}{\to} \M \to 0$$
on $U$ given by $\D/\D D^m \underset{L^{\dagger}(\cdot)}{\hookrightarrow} \D/\D D^m L^{\dagger} \twoheadrightarrow \D/\D L^{\dagger}$.  The dual sequence
$$0\to \M^{\vee} \to \E^{\vee} \to \K^{\vee} \to 0$$
is given by $\D/\D L \underset{D^m(\cdot)}{\hookrightarrow} \D/\D L  D^m \twoheadrightarrow \D/\D D^m $, and the solution sheaves by
$$ 0 \to \MM^{\vee}_{\CC}\overset{\imath}{\to }\EE^{\vee}_{\CC} \to \KK^{\vee}_{\CC}\to 0.$$
Via $\imath$, the basis $\ve_0,\ldots,\ve_n$ of $\MM^{\vee}_{\QQ,p}$ may be regarded as elements of $\EE_{\CC,p}^{\vee}$.  Let $\Omega \in \E(U)$ denote the image of $1\in \D/\D D^m L^{\dagger}$, so that $\pi(\Omega)=\mc$.

\begin{defn}\label{D7.0}
The connection $\E$ (or its restriction to a subset of $U$) \emph{underlies a $\QQ$-VMHS} if there is a $\QQ$-local system $\EE_{\QQ}\subset \EE_{\CC}=\ker(\nabla)$ with $\EE_{\QQ}\otimes \CC \cong \EE_{\CC}$, a flag $\F^{\bullet}\subset \E$ of holomorphic sub-bundles with $D\F^{\bullet}\subset \F^{\bullet-1}$, and a weight filtration $\mathcal{W}_{\bullet}$ on $\EE_{\QQ}$, such that the pointwise restrictions of $(\EE_{\QQ},\mathcal{W}_{\bullet},\F^{\bullet}\E)$ define $\QQ$-MHSs.
\end{defn}

Here we shall mainly be concerned with the restriction of $\E$ to the punctured neighborhood $\fuox$ and (provided this underlies a VMHS) its LMHS at $0$, in which $\mathcal{W}_{\bullet}$ is replaced by the relative monodromy weight filtration $W(N_0,\mathcal{W})_{\bullet}$ (whose existence is not an issue here).

\begin{thm}\label{T7.1}
$\E|_{\fuox}$ underlies a $\QQ$-VMHS which is the \textbf{unique} one on
\noindent\begin{minipage}{0.33\textwidth}
\begin{tikzpicture} [scale=0.4]
\draw [thick,->] (0,-4.5) -- (0,5);
\node [] at (0.5,5) {$q$};
\draw [thick,->] (-4.5,0) -- (5,0);
\node [] at (5,-0.5) {$p$};
\node [blue] at (-3,3) {\huge $\E$};
\draw [gray] (-1,-0.3) -- (-1,0.3);
\draw [gray] (-4,-0.2) -- (-4,0.2);
\draw [gray] (1,-0.2) -- (1,0.2);
\draw [gray] (4,-0.2) -- (4,0.2);
\draw [gray] (-0.2,-1) -- (0.2,-1);
\draw [gray] (-0.2,-4) -- (0.2,-4);
\draw [gray] (-0.2,1) -- (0.2,1);
\draw [gray] (-0.2,4) -- (0.2,4);
\node [gray] at (4,-0.5) {\small $n$};
\node [gray] at (-0.5,4) {\small $n$};
\node [gray] at (-1,0.5) {\tiny $-1$};
\node [gray] at (0.7,-0.9) {\tiny $-1$};
\node [gray] at (-4,0.5) {\tiny $-m$};
\node [gray] at (0.7,-3.9) {\tiny $-m$};
\filldraw [blue] (0,4) circle (4pt);
\filldraw [blue] (1,3) circle (4pt);
\filldraw [blue] (1.9,2.1) circle (1pt);
\filldraw [blue] (2.5,1.5) circle (1pt);
\filldraw [blue] (3.1,0.9) circle (1pt);
\filldraw [blue] (4,0) circle (4pt);
\filldraw [blue] (-1,-1) circle (4pt);
\filldraw [blue] (-4,-4) circle (4pt);
\filldraw [blue] (-1.9,-1.9) circle (1pt);
\filldraw [blue] (-2.5,-2.5) circle (1pt);
\filldraw [blue] (-3.1,-3.1) circle (1pt);
\draw [thick,->] (0,-16.5) -- (0,-7);
\node [] at (0.5,-7) {$q$};
\draw [thick,->] (-4.5,-12) -- (5,-12);
\node [] at (5,-12.5) {$p$};
\node [blue] at (-3,-9) {\huge ${\E}_{\scriptscriptstyle\text{lim}}$};
\draw [gray] (-1,-12.3) -- (-1,-11.7);
\draw [gray] (-4,-12.2) -- (-4,-11.8);
\draw [gray] (1,-12.2) -- (1,-11.8);
\draw [gray] (4,-12.2) -- (4,-11.8);
\draw [gray] (-0.2,-13) -- (0.2,-13);
\draw [gray] (-0.2,-16) -- (0.2,-16);
\draw [gray] (-0.2,-11) -- (0.2,-11);
\draw [gray] (-0.2,-8) -- (0.2,-8);
\node [gray] at (4,-12.5) {\small $n$};
\node [gray] at (-0.5,-8) {\small $n$};
\node [gray] at (-1,-11.5) {\tiny $-1$};
\node [gray] at (0.7,-12.9) {\tiny $-1$};
\node [gray] at (-4,-11.5) {\tiny $-m$};
\node [gray] at (0.7,-15.9) {\tiny $-m$};
\filldraw [blue] (4,-8) circle (4pt);
\filldraw [blue] (3,-9) circle (4pt);
\draw [red,thick,->] (3.8,-8.2) -- (3.2,-8.8); 
\node [red] at (4,-9) {$N_0$};
\filldraw [blue] (2.33,-9.66) circle (1pt);
\filldraw [blue] (2,-10) circle (1pt);
\filldraw [blue] (1.66,-10.33) circle (1pt);
\draw [red,thick,->] (0.8,-11.2) -- (0.2,-11.8);
\filldraw [blue] (1,-11) circle (4pt);
\filldraw [blue] (0,-12) circle (4pt);
\filldraw [blue] (-1,-13) circle (4pt);
\draw [red,thick,->] (-0.2,-12.2) -- (-0.8,-12.8);
\filldraw [blue] (-3,-15) circle (4pt);
\filldraw [blue] (-4,-16) circle (4pt);
\draw [red,thick,->] (-3.2,-15.2) -- (-3.8,-15.8);
\filldraw [blue] (-1.66,-13.66) circle (1pt);
\filldraw [blue] (-2,-14) circle (1pt);
\filldraw [blue] (-2.22,-14.33) circle (1pt);
\end{tikzpicture}
\end{minipage}
\begin{minipage}{0.67\textwidth}
$\fuox$ with underlying $\D$-module $\D/\D D^m L^{\dagger}$ and having the properties\textup{:}
\begin{enumerate}[leftmargin=1cm,label=\textup{(\roman*)}]
\item $\Omega$ belongs to $\F^n$\textup{;}
\item $\imath(\ve_0)$ belongs to $\EE^{\vee}_{\QQ,p}$\textup{;}
\item $\EE_{\QQ}^{(\vee)}$ extends to $\fu$ \textup{(}i.e. is closed under $T_c$\textup{);} and
\item $\rk(\E^{k,n-k})=1$ for $0\leq k \leq n$, $\rk(\E^{-k,-k})=1$ for $1\leq k\leq m$, and all other $\E^{p,q}$ are zero.
\end{enumerate}\vspace{3mm}
This VMHS satisfies, in addition, the following\textup{:}
\begin{enumerate}[leftmargin=1cm,label=\textup{(\alph*)}]
\item $\pi|_{\fuox}$ is a morphism of $\QQ$-VMHS\textup{;}
\item $(T_c-I)\EE_{\QQ}^{\vee}\subset \QQ\imath(\delta)$\textup{;}
\item the LMHS $\E_{\lim}$ of $\E$ at $0$ is Hodge-Tate, with $N_0^{n+m}\neq 0$\textup{;} and
\item the first $n+m+1$ power-series coefficients of $\kappa^{\dagger}(s)^{-1}$ yield the LMHS periods at $0$ \textup{(}extending Theorem \ref{T5.1}/Remark \ref{R6.3}\textup{)}.
\end{enumerate}
\end{minipage}
\end{thm}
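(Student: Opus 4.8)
\textit{Proof proposal.} The plan is to assemble the three ingredients of the VMHS directly from the Frobenius deformation $\Phi^{\dagger}=\sum_{\ell\geq 0}\phi^{\dagger}_{\ell}s^{\ell}$ of $L^{\dagger}$ (Definition \ref{D4.1}, Remark \ref{R6.3}), and then check that the constraints (i)--(iv) leave no freedom. Since $L^{\dagger}\phi^{\dagger}_{\ell}=\tfrac{\log^{\ell-r}(t)}{(\ell-r)!}$ for $\ell\geq r$ and $=0$ for $\ell<r$, the functions annihilated by $D^{m}L^{\dagger}$ are exactly $\phi^{\dagger}_{0},\ldots,\phi^{\dagger}_{n+m}$; the normalization $T_{0}\Phi^{\dagger}=e^{2\pi\ay s}\Phi^{\dagger}$ gives $N_{0}\phi^{\dagger}_{\ell}=2\pi\ay\,\phi^{\dagger}_{\ell-1}$, so these $n{+}1{+}m$ functions form a $\CC$-basis of $\mathrm{Sol}_{p}(D^{m}L^{\dagger})=\EE^{\vee}_{\CC,p}$ constituting a single $N_{0}$-Jordan block, with $\phi^{\dagger}_{0}=\tilde{A},\phi^{\dagger}_{1},\ldots,\phi^{\dagger}_{n}$ spanning $\imath(\MM^{\vee}_{\CC,p})$ (they solve $L^{\dagger}$). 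Since $D^{m}$ is regular away from $\{0,\infty\}$, $T_{c}-I$ has rank one on $\EE^{\vee}_{\CC}$, with image spanned by $\imath(\delta)$ --- which, viewed as a solution of $D^{m}L^{\dagger}$, equals $\psi^{\dagger}:=\langle\delta,\mc\rangle$; running the argument of $\S$\ref{S5} for $L^{\dagger}$ then gives $(T_{c}-I)\phi^{\dagger}_{\ell}=\kappa^{\dagger}_{\ell}\psi^{\dagger}$ for all $\ell\geq 0$, with $\kappa^{\dagger}(s)=\sum_{\ell}\kappa^{\dagger}_{\ell}s^{\ell}$ the kappa series of Remark \ref{R6.3}. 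The single Jordan block already yields $N_{0}^{n+m}\neq 0$ and $N_{0}^{n+m+1}=0$ on $\E$.

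For the Hodge flag I take $\F^{n-j}\E:=\sum_{i=0}^{j}\co\,D^{i}\Omega$ for $0\leq j\leq n+m$ and $\F^{-m-1}\E=0$; as $\Omega$ generates $\E$ cyclically and $D^{m}L^{\dagger}\Omega=0$, these are subbundles of ranks $1,2,\ldots,n{+}1{+}m$, they satisfy $D\F^{\bullet}\subset\F^{\bullet-1}$, and $\rk(\F^{n-j}/\F^{n-j-1})$ is as prescribed by (iv); conversely, any flag obeying (i), (iv) and $D\F^{\bullet}\subset\F^{\bullet-1}$ coincides with this one, by downward induction from $\F^{n}=\co\Omega$. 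The $\QQ$-structure and weight filtration are produced as in \cite[\S5]{BV} applied to $L^{\dagger}$: $\imath(\MM^{\vee}_{\QQ})=\langle\ve_{0},\ldots,\ve_{n}\rangle_{\QQ}$ is the lowest-weight part of $\EE^{\vee}_{\QQ}$ (whose weights are $-n$ there and $2,\ldots,2m$ on the Tate quotient $\KK^{\vee}=\mathrm{Sol}(\D/\D D^{m})$), and the extension of $\KK^{\vee}_{\QQ}$ by $\imath(\MM^{\vee}_{\QQ})$ is realized by completing $\ve_{0},\ldots,\ve_{n}$ to a $\QQ$-basis $\ve_{0},\ldots,\ve_{n+m}$ of $\EE^{\vee}_{\QQ}$ with $N_{0}\ve_{j}=\ve_{j-1}$ and $(T_{c}-I)\ve_{j}=\bm{\delta}_{0j}\imath(\delta)$ --- possible because the Jordan block has length exactly $n{+}1{+}m$, the integration constants being fixed $\QQ$-rationally by $T_{c}$-invariance exactly as in Lemma \ref{L3.1b}. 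This gives (b) and the closure statement of (iii), while fiberwise compatibility of $(\EE_{\QQ},\W_{\bullet},\F^{\bullet}\E)$ --- hence the admissible VMHS property --- is inherited from $\M$ and $\K$, each $\gr^{\W}$ being $\M$ or a Tate twist. Uniqueness follows: (i),(iv) force the flag, the weights and (ii) force the sub $\imath(\MM^{\vee}_{\QQ})$ and quotient $\KK^{\vee}_{\QQ}$, and (iii) together with $\rk(T_{c}-I)=1$ forces the extension class. Property (a) is then immediate, $\pi$ being the $\D$-module surjection $\D/\D D^{m}L^{\dagger}\twoheadrightarrow\D/\D L^{\dagger}=\M$ and visibly carrying $\W_{\bullet},\F^{\bullet}\E,\EE_{\QQ}$ to the corresponding data on $\M$.

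For the rest of (c): $\E_{\lim}$ is an extension of the LMHS of $\M$ at $0$ by that of $\K$; the former is Hodge--Tate since $0$ is a point of maximal unipotent monodromy for $\M$ (Hodge numbers all $1$ forces $\gr^{W(N_{0})}_{2k}\M_{\lim}=\QQ(-k)$), and the latter is Tate to begin with, so $\E_{\lim}$ is Hodge--Tate, with $\gr^{W(N_{0},\W)}_{2j}\E_{\lim}=\QQ(-j)$ for $-m\leq j\leq n$ --- consistent with the length-$(n{+}1{+}m)$ Jordan block. Finally (d) is the computation in the proof of Theorem \ref{T5.1} re-run with the enlarged basis: writing the periods $\e^{\dagger}_{i}:=\langle\ve_{i},\Omega\rangle$ (solutions of $D^{m}L^{\dagger}$), with $\e^{\dagger}_{n+m}=\sum_{\ell=0}^{n+m}c_{n+m-\ell}\phi^{\dagger}_{\ell}$, applying $N_{0}$ repeatedly (via $N_{0}\phi^{\dagger}_{\ell}=2\pi\ay\phi^{\dagger}_{\ell-1}$) and then $T_{c}-I$ (via $(T_{c}-I)\phi^{\dagger}_{\ell}=\kappa^{\dagger}_{\ell}\psi^{\dagger}$ and $\langle\imath(\delta),\Omega\rangle=\psi^{\dagger}$) yields $\sum_{\ell=0}^{i}\kappa^{\dagger}_{\ell}\,(2\pi\ay)^{n+m}c_{i-\ell}=\bm{\delta}_{0i}$ for $0\leq i\leq n+m$; hence $(2\pi\ay)^{n+m}c_{i}=\alpha^{\dagger}_{i}$ and $\alpha^{\dagger}_{i}=(2\pi\ay)^{i}\,\e^{\dagger,\mathrm{an}}_{i}(0)$ is the $i$-th entry of the $0$-th column of the LMHS period matrix of $\E$ --- which, being lower-triangular with the same shift structure as in Remark \ref{R4.2}, is thereby determined.

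The main obstacle is the construction of the $\QQ$-structure $\EE^{\vee}_{\QQ}$ together with the verification that it is the unique $\QQ$-form closed under $T_{c}$: this is where one genuinely needs the motivic hypothesis on $\M$ and the control of the integration constants by the kappa series in \cite[\S5]{BV}. Everything else --- the Hodge flag, the Hodge--Tate and Jordan-length statements, and the quantitative content of (d) --- is bookkeeping built on Theorem \ref{T5.1} and the Jordan structure of the Frobenius solutions from the first step.
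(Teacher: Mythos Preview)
Your approach is essentially the same as the paper's: both build the Hodge flag by transversality from $\Omega$, use the Frobenius solutions $\phi^{\dagger}_{0},\ldots,\phi^{\dagger}_{n+m}$ as a $\CC$-basis of $\EE^{\vee}_{\CC,p}$ forming a single $N_{0}$-Jordan block, construct the $\QQ$-basis $\ve_{0},\ldots,\ve_{n+m}$ by the conditions $N_{0}\ve_{j}=\ve_{j-1}$ and $(T_{c}-I)\ve_{j}=\bm{\delta}_{0j}\imath(\delta)$, and deduce (d) by the convolution identity $\sum_{j}\alpha^{\dagger}_{k-j}\kappa^{\dagger}_{j}=\bm{\delta}_{0k}$. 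Two points where the paper is more explicit than your sketch:

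\emph{Construction of the $\ve_{k}$.} Rather than appealing abstractly to a Lemma~\ref{L3.1b}-style argument, the paper simply \emph{defines} $\e^{\dagger}_{k}:=(2\pi\ay)^{-k}\sum_{j=0}^{k}\alpha^{\dagger}_{k-j}\phi^{\dagger}_{j}$ for $k>n$ (extending the formula already valid for $k\leq n$ from the proof of Theorem~\ref{T5.1}) and then checks the two properties directly. This is cleaner because it makes (d) immediate: $(2\pi\ay)^{k}\e^{\dagger,\mathrm{an}}_{k}(0)=\sum_{j}\alpha^{\dagger}_{k-j}\phi^{\dagger,\mathrm{an}}_{j}(0)=\alpha^{\dagger}_{k}$ since $\phi^{\dagger,\mathrm{an}}_{j}(0)=\bm{\delta}_{0j}$.

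\emph{Uniqueness.} Your sentence ``(ii) forces the sub $\imath(\MM^{\vee}_{\QQ})$ and quotient $\KK^{\vee}_{\QQ}$, and (iii) together with $\rk(T_{c}-I)=1$ forces the extension class'' is not quite a proof: condition (ii) only gives $\ve_{0}\in\hat{\EE}^{\vee}_{\QQ,p}$, not all of $\imath(\MM^{\vee}_{\QQ})$, and phrasing the remaining ambiguity as a single ``extension class'' hides the inductive content. The paper runs an explicit induction on $E_{k}:=\ker(N_{0}^{k})$ using the isomorphism $(N_{0},T_{c}-I)\colon E_{k}\overset{\cong}{\to}E_{k-1}\oplus\CC\imath(\delta)$; since any $\QQ$-structure consistent with (iii) must map $E_{k}$ into $(E_{k-1}\cap\hat{\EE}^{\vee}_{\QQ,p})\oplus\QQ\imath(\delta)$ (using that $\imath(\delta)=(T_{c}-I)\ve_{0}$ is rational by (ii)+(iii)), the induction closes. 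This step does \emph{not} use the motivic hypothesis on $\M$ as you suggest in your last paragraph --- only the monodromy structure from \S\ref{S3} (in particular that $(N_{0},T_{c}-I)$ is injective on each $E_{k}$, which comes from $\ker(N_{0})\cap\ker(T_{c}-I)=0$).
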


\begin{proof}
The Hodge filtration $\F^{n-k}\E=\mathcal{O}\langle \Omega,D\Omega,\ldots,D^k\Omega\rangle$, as well as the weight filtration $\mathcal{W}_n\E=\E$, $\mathcal{W}_{n-1}\E=\mathcal{W}_{-2}\E=\K$, $W_{-2n+2k}\E=\mathcal{O}\langle D^{m-k-1} L^{\dagger}\Omega,\ldots,D^{m-1}L^{\dagger}\Omega\rangle$ ($k=1,\ldots,m$), are forced upon us by transversality, $D^m L^{\dagger}\E=\{0\}$, and (iv).  Sending $\Omega\mapsto\mc$ projects $(\E,\F^{\bullet})\twoheadrightarrow (\M,\F^{\bullet})$.  We need to construct the $\QQ$-local system and show that $\mathcal{W}_{\bullet}$ is compatible with the resulting $\QQ$-structure; this will be carried out on the dual.

Writing $\Phi^{\dagger}=\sum_k \phi_k^{\dagger}s^k$ (cf. Remark \ref{R6.3}), we find exactly as in \S\ref{S5} that $(T_c-I)\phi_k^{\dagger}=\kappa_k^{\dagger}\psi^{\dagger}$, $N_0 \phi_k^{\dagger}=2\pi\ay\phi_{k-1}^{\dagger}$, and $L^{\dagger}\phi_{n+j}^{\dagger}=\tfrac{\log^{j-1}t}{(j-1)!}$ $\implies$ $D^j L^{\dagger}\phi_{n+j}=0$.  In particular, this yields identifications 
$$\MM^{\vee}_{\CC,p}\overset{\cong}{\underset{\langle\cdot,\mc\rangle}{\to}}\mathrm{Sol}_p(L^{\dagger})=\CC\langle \phi_0^{\dagger},\ldots,\phi_n^{\dagger}\rangle$$ and 
$$\EE_{\CC,p}^{\vee}\overset{\cong}{\underset{\langle\cdot,\Omega\rangle}{\to}} \mathrm{Sol}_p(D^m L^{\dagger})=\CC\langle \phi_0^{\dagger},\ldots,\phi_{n+m}^{\dagger}\rangle$$ 
for the $\CC$-local systems.  Omitting ``$\imath(\cdot)$'' for simplicity, we must extend the $\QQ$-basis $\{\ve_0,\ldots,\ve_n\}$ of $\MM_{\QQ,p}^{\vee}$ by some $\ve_{n+1},\ldots,\ve_{n+m}\in \EE^{\vee}_{\QQ,p}$.  Recalling from the proof of Theorem \ref{T5.1} (with daggers inserted) that $\e_k^{\dagger}=(2\pi\ay)^{-k}\sum_{j=0}^k \alpha^{\dagger}_{k-j}\phi_j^{\dagger}$ for $k=0,\ldots,n$, we can simply use this formula to \emph{define} $\e_k^{\dagger}$ and $\ve_k:=\langle\cdot,\Omega\rangle^{-1}(\e_k^{\dagger})$ for $k=n+1,\ldots,n+m$.  Then we automatically get $N_0\ve_k=\ve_{k-1}$, and $$(T_c-I)\ve_k = \textstyle{\left( \tfrac{1}{(2\pi\ay)^k}\sum_{j=0}^k \alpha^{\dagger}_{k-j} \kappa^{\dagger}_{j}\right)}\delta = \left\{\begin{matrix}\delta,& k=0\\ 0,& k>0.\end{matrix}\right.$$ The LMHS periods are just the $(2\pi\ay)^k \e_k^{\dagger,\text{an}}(0)=\sum_{j=0}^k \alpha^{\dagger}_{k-j} \phi^{\dagger,\text{an}}_{j}(0)=\sum_{j=0}^k \alpha^{\dagger}_{k-j} \bm{\delta}_{0j}=\alpha^{\dagger}_k$.  The weight filtration dual to $\mathcal{W}_{\bullet}$ may be described as $\mathcal{W}^{\vee}_{-n}=\mathcal{W}^{\vee}_{1}=\MM_{\QQ}^{\vee}$ and $\mathcal{W}_{2k}^{\vee}=\mathcal{W}_{2k+1}^{\vee}=\MM_{\QQ}^{\vee}+\QQ\langle \ve_{n+1},\ldots,\ve_{n+k}\rangle$ (the point being that it kills $\mathcal{W}_{-2k-2}\E=\mathcal{O}\langle D^k L^{\dagger}\Omega,\ldots,D^{m-1}L^{\dagger}\Omega\rangle$ because $\mathcal{W}_{2k}^{\vee}=\text{im}(N_0^{m-k})$ while $\langle N_0^{m-k}(\cdot),D^{\geq k} L^{\dagger}\Omega\rangle=\langle (\cdot),D^{\geq m}L^{\dagger}\Omega\rangle=0$).  This completes the proof of existence of the $\QQ$-VMHS and properties (a)-(d).

For uniqueness, suppose another $\hat{\E}$ satisfies (i)-(iv).  Again $\F^{\bullet}$ and $\mathcal{W}_{\bullet}$ are forced upon us, so that $\hat{\E}$ and $\E$ are the same as bifiltered $\D$-modules.  To show $\hat{\EE}_{\QQ,p}^{\vee}=\EE_{\QQ,p}^{\vee}$ inside $\EE^{\vee}_{\CC,p}$, write $E_k:=\ker(N_0^k)\subset \EE^{\vee}_{\CC,p}$ and assume inductively $E_{k-1}\cap \EE_{\QQ,p}^{\vee}=E_{k-1}\cap \hat{\EE}_{\QQ,p}^{\vee}$ (with (ii) providing the ``base case'' $k=1$).  We have an isomorphism\footnote{We are not \emph{using} (a)-(c) here (as we must not!), only (ii)-(iii) and the differential equation $D^m L^{\dagger}(\cdot)=0$.  Since the latter is essentially $D^{m+n+1}$ at $0$, and $N_0=-2\pi\ay \mathrm{Res}_0(\nabla_D)$, we get $N_0^{n+m}\neq 0$ directly.  We saw at the beginning of \S\ref{S5} that $(T_c-I)\EE^{\vee}_{\CC,p}\subset \CC\delta$ (from the differential equation only).  The map is an isomorphism because we have $E_0=\ker(N_0)\underset{T_c-I}{\overset{\cong}{\to}}\CC\imath(\delta)$ by our earlier assumptions on $\MM_{\CC}^{\vee}$ in \S\ref{S3}.} $$(N_0,T_c-I)\colon E_k \overset{\cong}{\to} E_{k-1}\oplus \CC\imath(\delta),$$ under which any choice of $\QQ$-structure on the left-hand side consistent with (iii) must go to $(E_{k-1}\cap \EE_{\QQ,p}^{\vee})\oplus \QQ\imath(\delta)$ on the right.  So $E_k\cap \EE_{\QQ,p}^{\vee}=E_k\cap \hat{\EE}_{\QQ,p}^{\vee}$.
\end{proof}

\begin{cor}\label{C7.1}
Given a $\QQ$-VMHS $\,\E'$ over $U$ of type \textup{(iv)}, with a surjective morphism to the $\QQ$-VHS $\M$ sending $\omega \in H^0(U,\F^n\E')$ to $\mc$, and $D^m L^{\dagger}\omega=0$.  Then $\E'|_{\fuox}\cong \E_{\textup{Thm. }\ref{T7.1}}$ as a $\QQ$-VMHS, and in particular \textup{(b)} resp. \textup{(c)-(d)} hold for ${\EE'}_{\QQ}^{\vee}$ resp. $\E_{\lim}$.
\end{cor}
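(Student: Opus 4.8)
The plan is to verify that $\E'|_{\fuox}$ satisfies the hypotheses characterizing the $\QQ$-VMHS produced in Theorem~\ref{T7.1} --- underlying $\D$-module $\D/\D D^m L^{\dagger}$ together with properties (i)--(iv) --- and then invoke the uniqueness clause of that theorem. Three of the four properties are immediate: (iv) is the assumed ``type''; (i) is the hypothesis $\omega\in H^0(U,\F^n\E')$; and for (ii), dualizing the given surjection $\E'\twoheadrightarrow\M$ gives a monomorphism $\M^{\vee}\hookrightarrow{\E'}^{\vee}$ of $\QQ$-VMHS, which carries $\MM^{\vee}_{\QQ,p}$ into ${\EE'}^{\vee}_{\QQ,p}$, so in particular $\imath(\ve_0)\in{\EE'}^{\vee}_{\QQ,p}$. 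Property (iii) costs nothing: $\E'$ is a $\QQ$-VMHS on all of $U$, which contains the punctured neighborhood $\fucx$ of $c$, so its underlying $\QQ$-local system is already defined there and its monodromy $T_c$ preserves both $\EE'_{\QQ,p}$ and ${\EE'}^{\vee}_{\QQ,p}$; the extension over $c$ is the Deligne canonical one. (The morphism $\E'\to\M$ is given to be a morphism of VMHS, so the analogue of (a) is a hypothesis here, not a conclusion.)

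The one substantive point is identifying the underlying $\D$-module, i.e.\ showing that $\omega$ generates $\E'|_{\fuox}$. By (iv) the bundle $\F^n\E'$ has rank one, so $\F^n\E'=\co\omega$; Griffiths transversality $D\F^{\bullet}\subset\F^{\bullet-1}$ and the rank counts in (iv) then give $\F^{n-k}\E'=\co\langle\omega,\ldots,D^k\omega\rangle$ generically for $0\le k\le n$, the generic independence of $\omega,\ldots,D^n\omega$ following because otherwise $\D\omega$ would have rank $\le n$ and could not surject onto $\D\mc=\M$. Since $L^{\dagger}$ has order $n+1$, $L^{\dagger}\omega$ is a section of $\F^{-1}\E'$ mapping to $L^{\dagger}\mc=0$ in $\M$, hence lies in $\F^{-1}\K'$ with $\K':=\ker(\E'\to\M)$; by (iv) this is again a line bundle. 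Applying $D$ and using (iv) once more, one wants $L^{\dagger}\omega,D L^{\dagger}\omega,\ldots,D^{m-1}L^{\dagger}\omega$ to span $\K'$ generically; granting this, $\D\omega$ surjects onto both $\M$ and $\K'$, hence $\D\omega=\E'$ on $\fuox$, and since $D^m L^{\dagger}$ has order $n+1+m=\rk\E'$ and annihilates $\omega$, it is the full annihilator, so the $\D$-module is $\D/\D D^m L^{\dagger}$ as required.

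The gap in the previous paragraph --- that $L^{\dagger}\omega$ and its $D$-iterates fill all of $\K'$, rather than $L^{\dagger}\omega$ vanishing or the iterates collapsing after fewer than $m$ steps --- is the main obstacle, and I would close it exactly as in the footnote to the uniqueness proof of Theorem~\ref{T7.1}, working on the dual: the equation $D^m L^{\dagger}(\cdot)=0$ is ``essentially $D^{n+1+m}$'' at $0$ (here one uses that $\mc$ has maximal unipotent monodromy, so that $\tilde{A}=A/p$ --- not a polynomial, its radius of convergence being $|c|$ --- is the unique exponent-$0$ solution and the constant function is not a solution of $L^{\dagger}$), which forces $N_0^{n+m}\neq0$ on ${\EE'}^{\vee}_{\CC,p}$; this together with (ii)--(iii) and the isomorphism $(N_0,T_c-I)\colon E_k\overset{\cong}{\to}E_{k-1}\oplus\CC\imath(\delta)$, with $E_k:=\ker N_0^k$, pins the local system down and excludes any proper $\D\omega$. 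Once the hypotheses of Theorem~\ref{T7.1} are in place, its uniqueness gives $\E'|_{\fuox}\cong\E$ as $\QQ$-VMHS, and then (b) for ${\EE'}^{\vee}_{\QQ}$ and (c)--(d) for $\E'_{\lim}$ follow by transport of structure across this isomorphism (the series $\kappa^{\dagger}$ in (d) depends only on $L^{\dagger}$, so it is literally unchanged).
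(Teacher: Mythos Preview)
Your proof is correct and follows the paper's approach: verify that $\E'|_{\fuox}$ satisfies (i)--(iv) together with the underlying $\D$-module condition, then invoke the uniqueness clause of Theorem~\ref{T7.1}. The paper's own proof is the single line ``Clearly (i)--(iii) are immediate from the hypotheses''; you go beyond this by explicitly arguing that $\omega$ is a cyclic generator with exact annihilator $D^m L^{\dagger}$, a point the paper leaves tacit.
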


\begin{proof}
Clearly (i)-(iii) are immediate from the hypotheses.
\end{proof}

There is a plentiful source of such $\QQ$-VMHS in the case $m=1$.  Let $\vf$ be a reflexive Laurent polynomial.  With notation as in Example \ref{E3.1}, and writing $\cx^{\times}:=\beta^{-1}(\GG^n_m)$, we can take the cup-product of the $\beta^* x_i\in \co^{\times}(\cx^{\times})\cong H^1_{\mathrm{M}}(\cx^{\times},\QQ(1))$ ($i=1,\ldots,n+1$) to get a motivic cohomology class $\{\ux\}\in H^{n+1}_{\mathrm{M}}(\cx^{\times},\QQ(n+1))$ called the \emph{coordinate symbol}.

\begin{defn}\label{D7.1}
We say that $\vf$ is \emph{tempered} if $\{\ux\}$ extends to a class $\bx\in H^{n+1}_{\mathrm{M}} (\cx\setminus X_0,\QQ(n+1))$. (One may assume without loss of generality that $\vf\in\bar{\QQ}[x_1^{\pm 1},\ldots,x_{n+1}^{\pm1}]$, since --- up to scale --- this is a necessary condition for temperedness \cite[Prop. 4.16]{DK}. Minkowski polynomials are expected to be tempered in general; this is known for $n\leq2$ \cite{dS}.  See \cite[\S3]{DK} for further discussion.)
\end{defn}

Recall that a (graded-polarizable) $\QQ$-VMHS $\V$ on $U$ is called \emph{admissible} (with respect to $\PP^1$) if it is the restriction of a polarizable mixed Hodge module from $\PP^1$.  Admissibility always holds for geometric variations, and guarantees that a LMHS exists at each $\sigma\in\Sigma$; henceforth these are written $\psi_{\sigma}\V$.\footnote{The LMHS is only well-defined with a choice of local parameter vanishing to first order at $\sigma$, and this parameter would usually be written as the subscript; for us, the parameter is always $t-\sigma$ ($\sigma$ finite) or $t^{-1}$ ($\sigma=\infty$).}

\begin{defn}\label{D7.2}
An admissible VMHS of the form $$0\to \H \to \V \to \QQ_U(0)\to 0,$$ where $\H$ is a $\QQ$-PVHS on $U$, is called an \emph{admissible normal function}; we write $\V\in \mathrm{ANF}(\H)$.  (These are only interesting, i.e. can be non-split, for $\H$ of weight $\leq -1$.  If the weight is $<-1$, they are called \emph{higher normal functions} since Bloch's higher Chow groups, or equivalently motivic cohomology, are the standard source.)  Using $\mathrm{Ext}^1_{\text{MHS}}(\QQ(0),\HH_t)\cong\HH_{t,\CC}/\left(F^0 \HH_{t,\CC}+\HH_{t,\QQ}\right)$, pointwise restriction yields a holomorphic section $\V_t$ of the generalized Jacobian bundle $J(\H):=\H/\left(F^0\H + \HH_{\QQ}\right)$; it is in this sense that $\V$ is a ``function''.
\end{defn}

If $\vf$ is tempered and good (cf. Example \ref{E3.1}), we may construct a higher normal function by applying the composition
\begin{align*}
H^{n+1}_{\mathrm{M}}(\X_U,\QQ(n+1))\overset{c_{\mathscr{H}}}{\to} \;& H_{\mathscr{H}}^{n+1}(\X_U,\QQ(n+1))\\ &\cong \mathrm{Ext}^1_{\text{MHM}(\X_U)_{\X}^{\text{ps}}}(\QQ_{\X_U}(0),\QQ_{\X_U}(n+1)) 
\\
\overset{\mathrm{Gr}^1_{\mathcal{L}}}{\twoheadrightarrow} \mathrm{Ext}^1_{\text{AVMHS}(U)}(\QQ_U(0),&\H_f^n (n+1)) \twoheadrightarrow \mathrm{Ext}^1_{\text{AVMHS}(U)}(\QQ_U(0),\M(n+1))
\end{align*}
of the absolute-Hodge cycle-class map \cite{KLe}, the projection to the bottom nonzero Leray-graded piece, and the projection from $\H_f^n$ to its direct summand $\M$.  The corresponding section of $J(\M(n+1))\cong \M/\MM(n+1)$ is evaluated at $t\in U$ by applying 
\begin{align*}
\mathrm{AJ}\colon H_{\mathrm{M}}^{n+1}(X_t,\QQ(n+1))\to \;&\mathrm{Ext}^1_{\text{MHS}}(\QQ(0),H^n_t(n+1))\\&\cong H^n(X_t,\CC/\QQ(n+1))
\end{align*}
to $\bx_t:=\imath^*_{X_t}\bx$ and projecting to $\MM_{t,\QQ}\otimes \CC/\QQ(n+1)$.

\begin{defn}\label{D7.3}
This higher normal function, written $$\V_{\vf}\in \mathrm{ANF}(\M(n+1)),$$ is called the \emph{box extension} associated to $\vf$.
\end{defn}

\begin{thm}\label{T7.2}
If $\M$ arises from a reflexive, good, tempered Laurent polynomial $\vf$, then the dual box extension provides a geometric realization of the unipotent extension with $m=1$: $$\V_{\vf}^{\vee}(1)|_{\fuox}\cong \E_{\textup{Thm. }\ref{T7.1}}.$$ Consequently, the periods of $\psi_0 \V_{\vf}^{\vee}$ and $\psi_0\V_{\vf}$ are given by $\{\alpha_0^{\dagger},\ldots,\alpha_{n+1}^{\dagger}\}$ and $\{\kappa_0^{\dagger},\ldots,\kappa_{n+1}^{\dagger}\}$ respectively.
\end{thm}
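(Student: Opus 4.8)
The plan is to reduce the isomorphism statement to Corollary~\ref{C7.1}: it suffices to exhibit $\V_{\vf}^{\vee}(1)$ as a $\QQ$-VMHS over $U$ of type~\textup{(iv)} with $m=1$, carrying a surjection onto $\M$ under which some $\omega\in H^0(U,\F^n\V_{\vf}^{\vee}(1))$ maps to $\mc$ and satisfies $DL^{\dagger}\omega=0$. For the shape, recall from Definition~\ref{D7.3} that the box extension is an admissible sequence $0\to\M(n+1)\to\V_{\vf}\to\QQ_U(0)\to 0$ of $\QQ$-VMHS on $U$. Dualizing, twisting by $\QQ(1)$, and using the polarization isomorphism $\M^{\vee}\cong\M(n)$ of \S\ref{S3}, one obtains an admissible sequence
$$0\to\QQ_U(1)\to\V_{\vf}^{\vee}(1)\overset{\pi'}{\to}\M\to 0 .$$
Passing to $\mathrm{Gr}^W$ splits this, so the Hodge numbers of $\V_{\vf}^{\vee}(1)$ are exactly those of $\M$ together with a single $(-1,-1)$; hence $\V_{\vf}^{\vee}(1)$ has type~\textup{(iv)} with $m=1$, admissibility provides its LMHS at $0$, and $\pi'$ is the required surjection.

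Next I would construct $\omega$. Since $\QQ_U(1)$ is pure of type $(-1,-1)$ we have $\F^n\QQ_U(1)=0$, so on the affine curve $U$ the coherent-cohomology sequence gives an isomorphism $H^0(U,\F^n\V_{\vf}^{\vee}(1))\xrightarrow{\ \sim\ }H^0(U,\F^n\M)$; thus $\mc$ lifts to a unique $\omega\in H^0(U,\F^n\V_{\vf}^{\vee}(1))$ with $\pi'(\omega)=\mc$. Because $\pi'$ is a morphism of connections and $L^{\dagger}\mc=0$ (see \S\ref{S6a}), the section $L^{\dagger}\omega$ lands in $\ker\pi'=\QQ_U(1)\otimes\co_U$, whose underlying connection is trivial; writing $L^{\dagger}\omega=g\cdot v$ with $v$ a flat generator and $g\in\co(U)$, one gets $DL^{\dagger}\omega=(Dg)\,v$, so the relation $DL^{\dagger}\omega=0$ amounts to the statement that $g$ is constant --- equivalently, that the box normal function $V_{\vf}$ satisfies an \emph{inhomogeneous Picard--Fuchs equation whose inhomogeneous term has degree $0$ in $t$}.

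Proving this constancy is the crux, and is where the global geometry of $(\cx,f)$ and the hypotheses on $\vf$ enter essentially. The function $g$ is \emph{a priori} regular on $U$, so one must rule out poles at the points of $\Sigma$. At $t=0$ this is immediate: $L^{\dagger}$ has the single exponent $0$ there (as $\mc$ realizes it, cf. Example~\ref{E3.1}), so $\omega$, like $\mc$, extends holomorphically over $0$ in the canonical extension, forcing $g$ to be regular at $0$. At each $\sigma\in\Sigma^{\times}$, one uses that the coordinate symbol $\bx$ is defined on all of $\cx\setminus X_0$ --- in particular across the fibers over $\Sigma^{\times}$ --- so that $\V_{\vf}$, hence $\V_{\vf}^{\vee}(1)$, is admissible near $\sigma$; together with the triviality of the $\QQ(1)$-summand under monodromy, and (for $\sigma=c$) the rank-one conifold monodromy of \S\ref{S3}, this bounds the singularity of $L^{\dagger}\omega$ at $\sigma$ to a removable one, so $g$ is regular there as well. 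Hence $g\in H^0(\PP^1,\co)=\CC$ and $DL^{\dagger}\omega=0$. (Equivalently, this inhomogeneous equation, and the non-vanishing of the constant, can be extracted directly from a tame-symbol/residue computation of $\bx$ along $X_0\cup X_{\infty}$, in the spirit of \cite{DK}; either way, reflexivity and temperedness of $\vf$ are used at this point.) I expect this step --- the control of $V_{\vf}$ across the singular fibers in $\Sigma^{\times}$ --- to be the main obstacle.

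With $DL^{\dagger}\omega=0$ in hand, Corollary~\ref{C7.1} gives $\V_{\vf}^{\vee}(1)|_{\fuox}\cong\E_{\mathrm{Thm.~}\ref{T7.1}}$, which is the first assertion. For the consequences, Theorem~\ref{T7.1}(d) (with $m=1$) identifies the LMHS periods of $\E_{\mathrm{Thm.~}\ref{T7.1}}$ at $0$ with the first $n+2$ coefficients $\alpha_0^{\dagger},\ldots,\alpha_{n+1}^{\dagger}$ of $\kappa^{\dagger}(s)^{-1}$, hence (up to the evident Tate twist) those of $\psi_0\V_{\vf}^{\vee}$; and since taking LMHS commutes with duality, $\psi_0\V_{\vf}$ is, up to twist, the dual of $\E_{\lim}$, whose leading period column is $\kappa_0^{\dagger},\ldots,\kappa_{n+1}^{\dagger}$ by the Toeplitz relation $\kappa^{\dagger}(s)\cdot\kappa^{\dagger}(s)^{-1}=1$, exactly as in Remark~\ref{R6.3}. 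This completes the proof.
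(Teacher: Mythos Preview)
Your reduction to Corollary~\ref{C7.1} is correct, and the shape/surjection/lifting steps are fine. The gap is in the crux: the global argument that $g$ is constant does not close as written. You never treat $t=\infty$ (recall $\Sigma^{\times}=\Sigma\setminus\{0,\infty\}$, so handling $0$ and $\Sigma^{\times}$ leaves $\infty$ untouched), and without a bound there you cannot conclude $g\in H^0(\PP^1,\co)$. A degree bound at $\infty$ does exist (Lemma~\ref{L8.1} and Remark~\ref{R8.2}(i), from \cite{GK}), but you have not invoked it, and even then it only yields $\deg(L\v_{\vf})\leq d-1$; since $g=\tfrac{1}{p}L\v_{\vf}$, you would still need to show $L\v_{\vf}\in p\cdot\CC$. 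Your argument at $\sigma\in\Sigma^{\times}$ is also not a proof: admissibility provides a canonical extension of $\V_{\vf}^{\vee}(1)$, but $\omega$ lifts $\mc=\mu/p$, and $p$ may well vanish at such $\sigma$ (cf.\ Remark~\ref{R8.2}(ii)), so $\omega$ need not lie in the canonical extension there and ``bounds the singularity to a removable one'' is unsupported.

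The paper sidesteps all of this with a direct local computation. Writing $\rt=\rt_{\QQ}-\rt_F$ for a lift of the normal function and $V_{\vf}(t):=Q(\rt,\mc)$, the key geometric input is the identity $D\rt=(2\pi\ay)^n\mu$ from \cite[Cor.~4.1]{DK} (this is precisely where reflexivity and temperedness enter). It forces $D^k\rt\in\F^{n+1-k}$ for $k>0$, hence $Q(D^k\rt,\mc)=0$ for $0<k<n+1$; expanding $L^{\dagger}V_{\vf}$ and using the Yukawa coupling (Proposition~\ref{P6.1}) then gives the \emph{explicit} constant $L^{\dagger}V_{\vf}=(-1)^n/\mathsf{Q}_0$. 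Finally one checks $V_{\vf}=\langle\rt_{\QQ},\omega\rangle$ is the missing period of $\omega$, whence $DL^{\dagger}\omega=0$. Your parenthetical alternative ``in the spirit of \cite{DK}'' is in fact the entire argument; the soft regularity approach you lead with does not suffice.
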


\begin{proof}
Since $\V_{\vf}$ is an extension of $\QQ_U(0)$ by $\M(n+1)\cong \M^{\vee}(1)$, $\V_{\vf}^{\vee}(1)$ is an extension of $\M$ by $\QQ(1)$, and is of the form (iv), with dual maps $\pi\colon \V_{\vf}^{\vee}(1)\to\M$ and $\imath\colon \MM^{\vee}\to \VV_{\vf}(-1)$. Let $\omega \in H^0(U,\F^n\V_{\vf}^{\vee}(1))$ be the unique section mapping to $\mc$.  We must show that $DL^{\dagger}$ annihilates all periods of $\omega$.  Clearly $\langle \imath(\ve_j),\omega\rangle=\langle \ve_j,\pi(\omega)\rangle=\langle\ve_j,\mc\rangle$ is killed by $L^{\dagger}$ for $j=0,\ldots,n$; so it remains to check that the remaining independent period (which will not be killed by $L^{\dagger}$) is killed by $DL^{\dagger}$.

Let $\rt_F\in H^0(U,\F^0\V_{\vf})$ and $\rt_{\QQ}\in H^0(\widetilde{U^{\text{an}}},\VV_{\vf,\QQ})$ be sections mapping to $1\in \QQ(0)$; their difference $\rt=\rt_{\QQ}-\rt_F$ is a multivalued section of $\M$ whose image in $J(\M(n+1))$ ``is'' $\V_{\vf}$ (as a normal function).  By \cite[Cor. 4.1]{DK} we have $D\rt=(2\pi\ay)^n\mu$.\footnote{The proof there is long and uses regulator currents; here is a sketch of a more hands-off proof: we can go from $H_{\mathscr{H}}(\X_U,\QQ(n+1))$ to $H^1_{\text{dR}}(U,\M)$ by (a) mapping to $\mathrm{Hom}_{\text{MHS}}(\QQ(0),H^{n+1}(\X_U,\QQ(n+1)))$ and taking the first Leray graded piece, or (b) taking fiberwise restrictions to get a section of $J(\M(n+1))$ and applying $\nabla$.  It is a standard exercise to show that these two compositions are equal; and (a) is given by $\mathrm{dlog}(\ux)=(2\pi\ay)^n\mu\otimes \tfrac{dt}{t}$, while (b) is exactly $\nabla \rt=D\rt\otimes \tfrac{dt}{t}$.} This implies that $D^k\rt\in \F^{n+1-k}$ for $k>0$ so that $Q(D^k\rt,\mc)=0$ for $0<k<n+1$.

Now consider the (holomorphic, multivalued) truncated higher normal function $$V_{\vf}(t):=Q(\rt,\mc),$$ and calculate
\begin{align*}
L^{\dagger}&= q_0 Q(D^{n+1}\rt,\mc)+Q(\rt,\cancelto{\scriptstyle{0}}{L^{\dagger}\mc})\\
&=(2\pi\ay)^n q_0 Q(D^n\mu,\tfrac{\mu}{p})=\tfrac{(2\pi\ay)^n q_0}{p}(-1)^n Q(\mu,D^n\mu)\\
&= \tfrac{(-2\pi\ay)^n q_0 Y}{Y(0)^{-1}q_0 Y}=\tfrac{(-2\pi\ay)^n}{(2\pi\ay)^n \mathsf{Q}_0}=\tfrac{(-1)^n}{\mathsf{Q}_0}.
\end{align*}
On the other hand, the duality pairing $\V_{\vf}\times \V_{\vf}^{\vee}(1)\to \co(1)$ sends $\F^0\times \F^n$ to zero, so that $\langle \rt_F,\omega\rangle=0$ and $$V_{\vf}=\langle \rt,\pi(\omega)\rangle =\langle \rt,\omega\rangle=\langle \rt_{\QQ},\omega\rangle$$ is a period, independent from $\{\langle \ve_j,\omega\rangle \}_{j=0}^n$, and killed by $DL^{\dagger}$.
\end{proof}

\begin{example}\label{E7.1}
The (reflexive, good, tempered) Laurent polynomial $\vf=(1-x_1-x_2+x_1 x_2-x_1 x_2 x_3)\prod_{i=1}^3 (1-x_i^{-1})$ appears in the algebro-geometrization of Ap\'ery's irrationality proof for $\zeta(3)$ \cite[\S5.3]{Ke}.  Its Picard-Fuchs operator $L=D^3-t(34D^3+51D^2+27D+5)+t^2(D+1)^3$ is self-adjoint, and we have $\kappa_1=0$, $\kappa_2=-2\zeta(2)$, $\kappa_3=\tfrac{17}{6}\zeta(3)$ \cite{GZ}.  At the end of \cite{BV}, Bloch and Vlasenko ``speculate'' that the dual box extension $\V_{\vf}^{\vee}(1)|_{\fuox}$ coincides with their unipotent extension $\E$ (with $m=1$) in this case.  So Theorem \ref{T7.2} confirms this speculation.
\end{example}

\begin{rem}\label{R7.1}
If we view the $\{\ve_j\}_{j=0}^n$ as rational classes in $\MM(n+1)_{\QQ}\cong \MM_{\QQ(n+1)}$ via $(2\pi\ay)^{n+1}Q^{-1}(\cdot)\colon \MM_{\QQ}^{\vee}\to \MM_{\QQ(n+1)}$, then in the proof of Theorem \ref{T7.2} one may choose $\rt_{\QQ}=\mathsf{Q}_0^{-1}(2\pi\ay)^{n+1}\ve_{n+1}$ and extend $e_0,\ldots,e_n$ by $e_{n+1}=\mathsf{Q}_0\rt_F$.  In precise terms, the Theorem is saying that $\omega(0)=\sum_{j=0}^{n+1}(2\pi\ay)^{-j}\alpha_j^{\dagger}\tilde{\ve}_j^{\vee}(0)$ and $e_{n+1}(0)=\sum_{j=0}^{n+1}(2\pi\ay)^j\kappa_{n+1-j}^{\dagger}\tilde{\ve}_j(0)$, where the tilde means to to apply $e^{-\el(t)N_0}$.  More usefully, these can be recast as formulas for 
\begin{align*}
V_{\vf}&\equiv \tfrac{1}{\mathsf{Q}_0}\sum_{k=0}^{n+1}\alpha_{n+1-k}^{\dagger}\tfrac{\log^k(t)}{k!}\;\;\;\;\;\text{and}\\
Q(\rt)&\equiv\tfrac{1}{\mathsf{Q}_0}\textstyle{\left((2\pi\ay)^{n+1}\ve_{n+1}-\sum_{j=0}^{n+1}(2\pi\ay)^j \kappa_{n+1-j}^{\dagger}\tilde{\ve}_j\right)}
\end{align*}
modulo $\co(t\log^{n+1}t)$,\footnote{Here $\ve_{n+1}-\tilde{\ve}_{n+1}=\sum_{k=1}^{n+1}\tfrac{(-1)^k}{k!}\el^k(t)\ve_{n+1-k}$ belongs to $\M^{\vee}$; so the formula for $\rt$ makes sense.} i.e. terms which limit to zero with $t$.  In particular, we have that $V_{\vf}^{\text{an}}(0)=\tfrac{\alpha_{n+1}^{\dagger}}{\mathsf{Q}_0}$.

It was pointed out in \cite{MKtoSB} that in Example \ref{E7.1}, one can use a variant of \cite[(9.29)]{DK} to check that $\mathsf{Q}_0 V_{\vf}^{\text{an}}(0)=-\tfrac{17}{6}\zeta(3)$ (where $\mathsf{Q}_0=-\tfrac{1}{12}$).  Clearly this laborious partial confirmation of the ``speculation'' of \cite{BV} is superseded by Theorem \ref{T7.2}.
\end{rem}

\begin{example}\label{E7.2}
Writing $\vf_r(\ux):=(1+\sum_{i=1}^r x_i)(1+\sum_{i=1}^r x_i^{-1})$, the Feynman integral $I_{\r}(t):=\int_{\mathbb{R}_{\geq 0}^{\times r}} \tfrac{\mathrm{dlog}(\ux)}{1-t\vf(\ux)}$ arising from the $r$-banana graph with equal masses can be interpreted as $V_{\vf}$ (with $L=L^{\dagger}$) by the methods of \cite{BKV}.  So $I_r^{\text{an}}(0)$ is a rational multiple of the relevant $\alpha_r$, which in turn should be the top-degree coefficient of the (inverted, regularized) $\hat{\Gamma}$-class of the degree-$(1,1,\ldots,1)$ Fano hypersurface in $(\PP^1)^{\times (r+1)}$.
\end{example}

\section{Inhomogeneous equations and normal functions}\label{S8}

Recall from the proof of Theorem \ref{T5.2} that $\tilde{\Phi}(s,t)=\Phi(s,t)-\kappa(s)\phi_0(t)$ has no monodromy about $t=c$ for any fixed $s$.  Taking $s=\ell\in \ZZ_{>0}$, $$L\tilde{\Phi}(\ell,t)\;=\;L\Phi(\ell,t)-\kappa(\ell)\cancelto{\scriptstyle{0}}{L\phi_0(t)}\;=\;\ell^r t^{\ell}.$$  Moreover, $\tilde{\Phi}(\ell,t)=\sum_{k\geq 0}A_k(\ell) t^{k+\ell}-\kappa(\ell)\phi_0(t)$ is analytic at $0$. The set of solutions to $L(\cdot)=\ell^r t^{\ell}$ which are analytic at $0$ is clearly then $\{\tilde{\Phi}(\ell,t)+\mathpzc{z}\phi_0(t)\}_{\mathpzc{z}\in\CC}$, and if $\mathpzc{z}\neq 0$ these solutions have monodromy about $c$.  Since $\ell$ is a positive integer and $\Phi(\ell,t)=\sum_{k\geq 0}A_k(\ell)t^{k+\ell}$, we have $\Phi(\ell,0)=0$; and recalling in addition that $\phi_0(0)=1$ gives $\tilde{\Phi}(\ell,0)=-\kappa(\ell)$.  This proves the

\begin{thm}\label{T8.1}
Let $V^{[\ell]}(t)$ be the unique solution to the inhomogeneous equation $L(\cdot)=-t^{\ell}$ analytic at $0$ with no monodromy about $c$.  Then $\kappa(\ell)=\ell^r V^{[\ell]}(0)$.
\end{thm}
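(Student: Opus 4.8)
The plan is to identify $V^{[\ell]}$ explicitly as $-\ell^{-r}\tilde\Phi(\ell,t)$, where $\tilde\Phi(s,t)=\Phi(s,t)-\kappa(s)\phi_0(t)$ is the truncated Frobenius deformation already used in the proof of Theorem~\ref{T5.2}. First I would set $s=\ell\in\ZZ_{>0}$ and check that this function does the job: since $L\Phi=s^rt^s$ and $L\phi_0=0$, one gets $L\tilde\Phi(\ell,t)=\ell^rt^\ell$, so $-\ell^{-r}\tilde\Phi(\ell,t)$ solves $L(\cdot)=-t^\ell$. Analyticity at $t=0$ follows from $\Phi(\ell,t)=\sum_{k\geq 0}A_k(\ell)t^{k+\ell}$ --- the coefficients $A_k(\ell)$ being finite by Remark~\ref{R4.1}, which places the poles of $A_k$ in $\ZZ_{<0}$, and the series converging there by the same Remark --- together with analyticity of $\phi_0=A(t)$ at $0$; and absence of monodromy about $t=c$ is exactly the property of $\tilde\Phi$ recorded in the proof of Theorem~\ref{T5.2} (note $\kappa(\ell)$ itself makes sense since $\ell\notin\ZZ_{<0}$).

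Next comes uniqueness. Because $\M$ has maximal unipotent monodromy at $0$, the indicial polynomial is $P_0(D)=D^r$ (see the opening of \S\ref{S4}), and the space of solutions of the homogeneous equation $L(\cdot)=0$ analytic at $0$ is one-dimensional, spanned by $\phi_0$: any solution holomorphic at $0$ is single-valued near $0$, hence $T_0$-invariant, hence lies in the one-dimensional invariant part of the local solution space, which is $\CC\phi_0$. Thus every solution of $L(\cdot)=\ell^rt^\ell$ analytic at $0$ has the form $\tilde\Phi(\ell,t)+\mathpzc{z}\phi_0(t)$ for a unique $\mathpzc{z}\in\CC$. Among these, $\tilde\Phi(\ell,t)$ is the only one without monodromy about $c$: indeed $\phi_0$ itself acquires nontrivial monodromy there, since $(T_c-I)\phi_0=\kappa_0\psi=\psi\neq 0$ by the normalization $\kappa_0=1$ and the hypothesis $T_c\ve_0\neq\ve_0$ from \S\ref{S3}, while $\tilde\Phi(\ell,t)$ has none. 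Hence $V^{[\ell]}(t)=-\ell^{-r}\tilde\Phi(\ell,t)$.

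Finally I evaluate at $t=0$: every term of $\Phi(\ell,t)=\sum_{k\geq 0}A_k(\ell)t^{k+\ell}$ carries a power $t^{k+\ell}$ with $k+\ell\geq 1$, so $\Phi(\ell,0)=0$, while $\phi_0(0)=a_0=1$; therefore $\tilde\Phi(\ell,0)=-\kappa(\ell)$, whence $V^{[\ell]}(0)=-\ell^{-r}\tilde\Phi(\ell,0)=\kappa(\ell)/\ell^r$, i.e. $\kappa(\ell)=\ell^rV^{[\ell]}(0)$ with $r=n+1$, as claimed.

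The only step demanding genuine care is the uniqueness assertion --- that ``analytic at $0$ and monodromy-free at $c$'' really pins down the solution. This rests on two inputs: that the holomorphic-at-$0$ kernel of $L$ is exactly $\CC\phi_0$ (standard Frobenius theory once $P_0(D)=D^r$, but one should confirm that no logarithmic solution which happens to be single-valued slips in --- it cannot, since $0$ is the unique exponent), and that $\phi_0$ is genuinely multivalued around $c$, which is precisely the hypothesis $T_c\ve_0\neq\ve_0$ already imposed in \S\ref{S3}. Everything else is bookkeeping with the Frobenius expansion.
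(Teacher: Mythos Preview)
Your proof is correct and follows essentially the same approach as the paper: identify $V^{[\ell]}=-\ell^{-r}\tilde\Phi(\ell,\cdot)$, verify it solves the inhomogeneous equation and is analytic at $0$ without monodromy at $c$, establish uniqueness via the one-dimensional holomorphic kernel $\CC\phi_0$ (which \emph{does} have monodromy at $c$), and evaluate at $t=0$ using $\Phi(\ell,0)=0$ and $\phi_0(0)=1$. Your write-up is in fact slightly more explicit than the paper's on a couple of points (the reference to Remark~\ref{R4.1} for finiteness of $A_k(\ell)$, and the justification that the holomorphic-at-$0$ solution space is exactly $\CC\phi_0$).
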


\begin{defn}\label{D8.0}
The values $\{\kappa(\ell)\}_{\ell\in\mathbb{N}}$ are called the \emph{Ap\'ery constants} of $L$.
\end{defn}

\begin{rem}\label{R8.1}
If we take $\ell\in[1,d-1]\cap \ZZ$,\footnote{Remember that $d$ is the degree of $L$ (in $t$).} then  $$b_k^{[\ell]}:=\left\{ \begin{matrix}0,& k<\ell\\ \frac{1}{\ell^r}A_{k-\ell}(\ell),& k\geq \ell \end{matrix} \right.$$ is evidently a solution to the recurrence attached to $L$.  (If $L\in \QQ[t,D]$, then the $b_k^{[\ell]}$ are also rational.)  Its generating series $\tfrac{1}{\ell^r}\Phi(\ell,t)$ and the holomorphic period $\phi_0(t)=\sum_{k\geq 0} a_k t^k$ both have monodromy about $c$, but $V^{[\ell]}(t)=\sum_{k\geq 0}(\tfrac{\kappa(\ell)}{\ell^r}a_k-b_k^{[\ell]})t^k=:\sum_{k\geq 0}v_k t^k$ does not.  So if the $a_k$ are nonzero for sufficiently large $k$, we have $$0=\lim_{k\to \infty}\frac{v_k}{a_k}\;\;\;\;\implies\;\;\;\; \frac{\kappa(\ell)}{\ell^r}=\lim_{k\to \infty}\frac{b_k^{[\ell]}}{a_k}.$$  Note that in the strong conifold monodromy case, the nonvanishing of $a_{k\gg0}$ is guaranteed by the asymptotics in the proof of Theorem \ref{T5.2}; moreover, the description of $\kappa(\ell)$ just given is consistent with Theorem \ref{T5.2}(i) since $\lim_{k\to \infty}\tfrac{a_{k-\ell}}{a_k}=c^{\ell}$ by those same asymptotics.
\end{rem}

\begin{example}\label{E8.1}
Revisiting Example \ref{E7.1} ($d=n=2$) and taking $\ell=1$, Remark \ref{R8.1} reproduces the pair of sequences $\{a_k\}=1,5,73,1445,\ldots$ and\footnote{In most of the literature, the second sequence is multiplied by $6$.  Note that any solution to the recurrence is determined by its first two terms.} $\{b_k\}:=\{b_k^{[1]}\}=0,1,2106,\tfrac{125062}{3},\ldots$ in Ap\'ery's irrationality proof for $\zeta(3)$, with limit $\kappa(1)=\lim_{k\to \infty}\tfrac{b_k}{a_k}=\tfrac{\zeta(3)}{6}$.  Though the difference between this and $\kappa_3=\tfrac{17}{6}\zeta(3)$ may seem trivial, this is an artifact of the VHS $\M$ underlying Ap\'ery possessing an ``involution'' under $t\mapsto \tfrac{1}{t}$ (cf. \cite[\S5.3]{Ke} and \cite[\S5.2]{GK}).  In general the $\{\kappa(\ell)\}$ and $\{\kappa_j\}$ describe completely different things.  The $\{\kappa(\ell)\}$ are closely related, as we shall see, to \emph{special values at $0$ of normal functions} nonsingular at $0$, as well as to Galkin's Ap\'ery constants of Fano varieties \cite{Ga}.  The $\{\kappa_j\}$ are extension-class invariants of the LMHS at $0$ of the unipotent extensions of \S\ref{S7} (but \emph{cannot} be evaluated as the \emph{limit of an extension} at $0$), and are closely tied to the Gamma constants of Fano varieties \cite{GGI,GZ}.
\end{example}

We shall conclude this article by saying something about these special values of normal functions.  Given a polarized $\QQ$-VHS $\H$ on $U$ (of negative weight), there are \emph{singularity invariants} $$\mathrm{sing}_{\sigma}\colon \mathrm{ANF}(\H)\to \mathrm{Hom}_{\text{MHS}}(\QQ(0),(\psi_{\sigma}\H)_{T_{\sigma}}(-1))$$ attached to each $\sigma\in\Sigma$ \cite[\S2.12]{KP}.  (This is essentially the restriction map $H^1(U,\HH)\to H^1(\Delta_{\sigma}^{\times},\HH)$ applied to Hodge-$(0,0)$ classes, where $\Delta_{\sigma}^{\times}$ is a punctured disk about $\sigma$.)  One says that $\V\in\mathrm{ANF}(\H)$ is \emph{singular} at $\sigma$ if $\mathrm{sing}_{\sigma}(\V)\neq 0$.  Writing $h:=\deg(\F_e^n)$ for the degree of the Hodge line bundle, we have the

\begin{lem}[\cite{GK}] \label{L8.1}
Given $a\in\ZZ_{>0}$ and $\nf\in\mathrm{ANF}(\M(a))\setminus \{0\}$ nonsingular away from $\infty$, let $\tilde{\nf}$ be a \textup{(}multivalued\textup{)} lift to $\M$ of the associated section of $J(\M(a))$, and $\v(t):=Q(\tilde{\nf},\mu)$ the resulting \textup{(}multivalued, holomorphic\textup{)} truncated HNF on $U$.  Then $L\v$ is a nonzero polynomial in $t$ vanishing at $t=0$, of degree $\leq d-h$.
\end{lem}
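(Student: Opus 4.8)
The plan is to analyze $L\v$ at each of the three types of singular point --- $0$, $\infty$, and the points $\sigma\in\Sigma^{\times}$ --- and combine the local information. First I would establish that $L\v$ is a \emph{polynomial}. Since $\tilde{\nf}$ is a multivalued lift of a section of $J(\M(a))=\M/(\F^0\M+\MM_{\QQ}(a))$, its monodromy around any loop changes it by a flat section of $\M$; hence $\nabla_D\tilde{\nf}$ is single-valued, and so is $L\v=Q(L\tilde{\nf},\mu)=\langle\eta(\v),\ldots\rangle$-type expression --- more directly, $L$ annihilates $\mu$ and all flat sections, so $L\v$ is a single-valued holomorphic function on $U$. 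To see it extends holomorphically across each $\sigma\in\Sigma^{\times}$ and across $\infty$ (i.e. is a polynomial, possibly with a pole bound at $\infty$), I would use admissibility: the nonsingularity of $\nf$ at $\sigma$ says the extension class of $\V$ dies in $H^1(\Delta_\sigma^\times,\MM(a))$, which forces $\tilde{\nf}$ to have moderate growth \emph{and} its ``discontinuity'' (the period of the flat-section jump against $\mu$) to be killed by $L$; then $L\v$, being single-valued and of moderate growth at $\sigma$ with no residue obstruction, is holomorphic there. The same reasoning handles $0$ (where $\M$ is also singular but $\nf$ is nonsingular by hypothesis). So $L\v\in\CC[t]$.

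Next I would pin down the vanishing at $t=0$ and the degree bound. For the vanishing at $0$: near $t=0$ one has $P_0(D)=D^r$, and the period expansion of $\mu$ together with the nonsingularity of $\nf$ at $0$ forces $\v$ to have an expansion beginning in the analytic (non-log) sector; plugging into $L=\sum_j t^j P_j(D)$ and extracting the constant term, the $t^0$-coefficient of $L\v$ is $P_0(0)\cdot(\text{value of }\v\text{ at }0)=0$ since $P_0(0)=0$ (as $P_0(D)=D^r$ with $r=n+1>0$). Hence $L\v(0)=0$. For the degree bound $\deg(L\v)\le d-h$: here I would work at $\infty$. Recall $h=\deg(\F_e^n)=\ord_\infty$-type data for $\mu$; in the geometric/hypergeometric normalization of \S\ref{S3}, $\mu$ vanishes to order $h$ at $\infty$ (e.g. $h=1$ in Example \ref{E3.1}). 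The key point: $L\v$ is $Q(\tilde{\nf},\mu)$ pushed through the operator, and since $\nf\in\mathrm{ANF}(\M(a))$ with $a>0$ the lift $\tilde{\nf}$ has controlled (polynomial, bounded by the weight/twist) growth at $\infty$, while the extra vanishing of $\mu$ at $\infty$ to order $h$ --- equivalently the fact that $q_0(t)$, the leading symbol, has $\deg q_0\le d$ and the indicial data at $\infty$ is shifted by $h$ --- cuts the degree of the polynomial $L\v$ down from the naive bound $d$ to $d-h$. Concretely I would write $L=\sum_{i=0}^r q_{r-i}(t)D^i$, note $\deg q_i\le d$, expand $\v$ near $\infty$ using the $\infty$-exponents of $L$ (which, because $\mu\in H^0(\F_e^n)$ has a zero of order $h$ there, differ from $0$ by at least $h$ in the relevant coordinate $1/t$), and read off that the top $h$ potential coefficients of $L\v$ vanish.

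Finally I would show $L\v$ is \emph{nonzero}. Suppose $L\v\equiv0$. Then $\v$ is itself a period of $\M$ (a solution of $L$), i.e. $\tilde{\nf}$ is --- modulo $\F^0\M+\MM_\QQ(a)$ --- flat, which would mean $\nf$ is the zero normal function in $\mathrm{ANF}(\M(a))$, contradicting $\nf\ne0$. (One has to be mildly careful: $L\v\equiv 0$ only says $\v\in\mathrm{Sol}(L)$, and I should argue this section of $J(\M(a))$ being covered by a solution of $L$ --- equivalently by a section of $\M$ with $L$-monodromy but no new transcendental part --- forces the extension to split as a $\QQ$-VMHS. This is where I'd invoke that $\M$ is the \emph{minimal} cyclic piece and that $\mathrm{Ext}^1_{\mathrm{AVMHS}}$-classes are detected by exactly the period $\v$ modulo solutions of $L$.) I expect this last nonvanishing step to be the main obstacle, since it requires matching ``$\v$ is $L$-flat'' with ``the extension class is trivial'' and the cleanest route is probably to cite the identification of $\mathrm{ANF}(\M(a))$ with a quotient of $H^1_{\mathrm{dR}}(U,\M)$ already implicit in \S\ref{S7} and Definition \ref{D7.2}, under which $\v\bmod \mathrm{Sol}(L)$ \emph{is} the de Rham avatar of the class.
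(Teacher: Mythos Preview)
The paper does not actually prove Lemma~\ref{L8.1}; it is stated with a citation to \cite{GK} (the then-forthcoming Golyshev--Kerr--Sasaki paper) and no proof is supplied here.  So there is nothing to compare against directly.

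That said, your outline is the natural one and essentially correct in spirit, but several steps are under-argued.  First, your aside ``$L\v=Q(L\tilde{\nf},\mu)$'' is false as written (the Leibniz rule intervenes); fortunately you then give the right argument --- monodromy changes $\tilde{\nf}$ by a flat section, hence $\v$ by a period, which $L$ kills.  Second, the vanishing at $t=0$ needs more care: $\tilde{\nf}$ may involve logarithmic terms up to $\log^n(t)$ (coming from the $\e_j$), and you should observe that $P_0(D)=D^{n+1}$ kills $\log^{\leq n}(t)$ at the constant level, so the $t^0$ coefficient of $L\v$ really is zero.  Your version (``$P_0(0)=0$ times the value of $\v$'') tacitly assumes $\v$ is analytic at $0$, which it need not be for an arbitrary lift.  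Third, the degree bound at $\infty$ is the crux and your sketch is too vague: one needs the admissibility of $\nf$ at $\infty$ (moderate growth of the canonical extension of $\tilde{\nf}$), the exponents of $L$ at $\infty$, and the precise order-$h$ vanishing of $\mu$ in $\F^n_e$ there, combined so that $t^{d-h+1},\ldots,t^d$ cannot appear in $L\v$.  Writing this out carefully is the real content of the lemma.  Finally, the nonvanishing step: $L\v\equiv 0$ says $\v\in\mathrm{Sol}_p(L)$, hence (by Lemma~\ref{L6.1}(ii)) $\tilde{\nf}\in\MM_{\CC}^{\vee}\cong\MM_{\CC}$, and then the image in $J(\M(a))$ is locally constant; but a flat section of $J(\M(a))$ extending over all of $U$ with vanishing topological invariant must come from $H^0(U,\MM_{\QQ}(a))=0$ (pure weight, no invariants) --- this gives $\nf=0$.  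You should make that last deduction explicit rather than defer to an ``identification already implicit in \S\ref{S7}''.
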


\begin{rem}\label{R8.2}
(i) If $\nf$ is nonsingular away from $0$ instead, the result in \cite{GK} (which works in greater generality than our setting) says that $\deg(L\v)\leq d-h-1$ if $T_{\infty}$ is unipotent and $\leq d-h$ otherwise.  (However, $L\v$ need not vanish at $0$.)  The box extensions $\V_{\vf}$ from \S\ref{S7} are of this type, with $h=1$, and the proof of Theorem \ref{T7.2} shows --- writing $\v_{\vf}:=Q(\rt,\mu)=pV_{\vf}$ --- that $L\v_{\vf}=pLV_{\vf}=\tfrac{(-1)^n}{\mathsf{Q}_0}p$.  So in the setting of Definition \ref{D7.1}, we get that $\deg(p)\leq d-2$ resp. $d-1$ (depending on $T_{\infty}$).

(ii) Continuing with this setting, there are immediate consequences for the lowest degrees. Clearly if $d=1$ then $\deg(p)=0$, $T_{\infty}$ is non-unipotent, and $L^{\dagger}=L$. In fact, if $d=2$ we also have $L^{\dagger}=L$. To see this, write $c'$ for the second root of $q_0$. If $c'\in U$, then $\mu,\partial\mu,\ldots,\partial^n \mu$ do not span $\M_{c'}$ ($\partial^{n+1}\mu$ is not an $\co_{c'}$-linear combination of them); so there is a gap in the Kodaira-Spencer maps and $Y(c')=0$. A similar argument shows $Y(c')=0$ if $c'=c$ ($\mathrm{ord}_c(q_0)=2$). Either way, $p$ has (at least) a double zero at $c'$, contradicting $\deg(p)\leq 1$. So $\Sigma^{\times}=\{c,c'\}$ and $1=\mathrm{ord}_{c'}(q_0)\geq \rk(T_{c'}-I)$ forces conifold monodromy at $c'$; moreover, no Kodaira-Spencer maps vanish anywhere\footnote{Any vanishing of a K-S map \emph{at} a conifold monodromy point is away from the center, hence duplicated by the self-duality; so $Y$ has odd order. Any vanishing of a K-S map (equivalently, of $Y$) on $U$ makes $q_0$ vanish.} on $\CC^{\times}$.  So $q_0 Y$ is constant $\implies$ $p\equiv 1$ $\implies$ $L^{\dagger}=L$. (In contrast, if $d=3$ there are examples like the family generated by $\vf=\tfrac{(1+x_1+x_2^2)^2}{x_1 x_2}-8$, with $\Sigma^{\times}=\{-\tfrac{1}{16},-\tfrac{1}{8}\}$, $q_0=(1+16t)(1+8t)^2$, and $p=1+8t$. The trouble is the $I_0^*$ fiber at $t=-\tfrac{1}{8}$.)

(iii) If $\nf$ is nonsingular everywhere (and nontrivial), then $\M(a)$ must have weight $-1$ ($\iff$ $n$ odd and $a=\tfrac{n+1}{2}$), which corresponds to ``classical'' normal functions. In this case, $deg(L\v)\leq d-h-1$ resp. $d-h$ \emph{and} $L\v$ vanishes at $0$.  See Example \ref{E8.2}(b) below.
\end{rem}

We shall use the Lemma to prove a result which, together with Theorem \ref{T8.1}, produces an interpretation of (some) $\kappa(\ell)$'s as special values (at $t=0$) of normal functions.

\begin{thm}\label{T8.2}
Suppose there exists an embedding $$\vt\colon \QQ(-a)\hookrightarrow \IH^1(\mathbb{A}^1,\MM),$$ where $\mathbb{A}^1=\PP^1\setminus \{\infty\}$.  Then there is a normal function\footnote{See the proof for the precise correspondence with $\vt$.} $$\nf_{\vt}\in \mathrm{ANF}(\M(a))\setminus \{0\},$$ with $\v_{\vt}:=Q(\tilde{\nf}_{\vt},\mu)$ satisfying $L\v_{\vt}(t)=tP_{\vt}(t)$, where $P_{\vt}\in\CC[t]\setminus\{0\}$ has $\deg(P_{\vt})\leq d-h-1$.  The lift $\tilde{\nf}_{\vt}$ can be chosen so that $\v_{\vt}$ is analytic on a disk of radius $>|c|$ about the origin.
\end{thm}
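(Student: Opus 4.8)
The plan is to realize $\vt$ as a non-singular admissible normal function and then feed it to Lemma~\ref{L8.1}. Write $j\colon U\hookrightarrow\mathbb A^1$ and $\IH^{\bullet}(\mathbb A^1,\M):=\mathbb H^{\bullet-1}(\mathbb A^1,j_{!*}(\M[1]))$; since $j_{!*}(\M[1])\hookrightarrow\mathrm Rj_*(\M[1])$ with cokernel supported on $\Sigma\cap\mathbb A^1$, there is an injection of mixed Hodge structures $\IH^1(\mathbb A^1,\M)\hookrightarrow H^1(U,\M)$. Because $\M$ has maximal unipotent monodromy at $0$ it has no nonzero invariant sections, so $\IH^0(\mathbb A^1,\M(a))=0$, and consequently $\mathrm{Ext}^1_{\mathrm{MHM}(\mathbb A^1)}\bigl(\QQ_{\mathbb A^1}(0),\,j_{!*}\M(a)\bigr)\cong\mathrm{Hom}_{\MHS}\bigl(\QQ(0),\IH^1(\mathbb A^1,\M(a))\bigr)$. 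Twisting $\vt$ by $(a)$ gives a nonzero element of the right-hand side (nonzero since $\vt$ is injective), hence a non-split extension $0\to j_{!*}\M(a)\to\widetilde{\mathcal V}\to\QQ_{\mathbb A^1}(0)\to0$ in $\mathrm{MHM}(\mathbb A^1)$; its restriction $\mathcal V_\vt:=\widetilde{\mathcal V}|_U$ is an admissible extension of $\QQ_U(0)$ by $\M(a)$, i.e. an element $\nf_\vt\in\mathrm{ANF}(\M(a))\setminus\{0\}$.

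Next I would observe that $\nf_\vt$ is non-singular away from $\infty$: since $\mathcal V_\vt$ is cut out of the object $\widetilde{\mathcal V}$ defined over all of $\mathbb A^1$, the extension class survives restriction to a disk about each $\sigma\in\Sigma\cap\mathbb A^1$, forcing $\mathrm{sing}_\sigma(\nf_\vt)=0$ (equivalently, the image of $\IH^1(\mathbb A^1,\M(a))$ in $H^1(U,\M(a))$ lies in the kernel of each restriction $H^1(U,\M(a))\to H^1(\Delta_\sigma^\times,\M(a))$ computing the singularities). Now Lemma~\ref{L8.1} applies: with a multivalued lift $\tilde\nf_\vt$ of the section of $J(\M(a))$ attached to $\nf_\vt$ and $\v_\vt:=Q(\tilde\nf_\vt,\mu)$, the function $L\v_\vt$ is a nonzero polynomial vanishing at $t=0$ of degree $\le d-h$. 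Writing $L\v_\vt(t)=tP_\vt(t)$ yields $P_\vt\in\CC[t]$ with $\deg P_\vt\le d-h-1$, and $P_\vt\neq0$ because $L\v_\vt\not\equiv0$.

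For the final assertion I would argue as for $V^{[\ell]}$ in Theorem~\ref{T8.1}. Near $t=0$, where the right side $g:=tP_\vt$ of $L(\cdot)=g$ vanishes and the indicial polynomial is $\mathtt T^r$, there is a unique solution $V_g=\sum_{k\ge1}v_k t^k$ analytic and vanishing at $0$, and every local solution has the form $V_g+\sum_{k=0}^n c_k\phi_k$; near $t=c$, where $\ord_c(q_0)=1$ and the exponents of $L$ are the non-negative numbers $0,1,\dots,n-1$ and $\tfrac{n-1}{2}$ (Lemma~\ref{L3.2}(ii), the last realized by $\psi$), every local solution has the form $(\text{analytic at }c)+a\,\psi$. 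Non-singularity of $\nf_\vt$ at $0$ forces, for the lift of $\tilde\nf_\vt$ compatible with $\widetilde{\mathcal V}$, the coefficients $c_1=\dots=c_n=0$, so $\v_\vt$ is analytic at $0$; non-singularity at $c$ forces $a=0$ for the same lift, so $\v_\vt$ is analytic at $c$. A function analytic at both $0$ and $c$ and solving $L(\cdot)=g$ on $\mathbb D\setminus\{0,c\}$ is single-valued there (its monodromy about the two punctures is trivial, and these loops generate $\pi_1(\mathbb D\setminus\{0,c\})$), hence holomorphic on all of $\mathbb D$; and since $\mathbb D\cap\Sigma=\{0,c\}$, the disk $\mathbb D$ has radius $>|c|$.

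The step I expect to be the main obstacle is the claim in the third paragraph that non-singularity removes the $\phi_1,\dots,\phi_n$ and $\psi$ contributions, i.e. that for the correct lift $\v_\vt$ genuinely extends holomorphically across $0$ and $c$ rather than merely acquiring a finite limit. At $c$ this is a mild variant of the relations $(T_c-I)\phi_m=\kappa_m\psi$ of \S\ref{S5}. At $0$ it is the delicate point, since $T_0$ is maximally unipotent and the box extension of \S\ref{S7} shows that the truncated normal function of a variation singular at $0$ can blow up logarithmically; I would deduce it from the fact that non-singularity makes $\psi_0\mathcal V_\vt$ an honest extension of $\QQ(0)$ by $\psi_0\M(a)$ (with no Tate twist), read off that the period jump of $\tilde\nf_\vt$ along $\gamma_0$ is then $(T_0-I)$-exact over $\QQ$, and verify — using that $Q$ pairs $\F^p_e$ with $\F^q_e$ trivially for $p+q>n$ and that $\mu(0)$ lies in the top graded piece $\gr^{W(N_0)}_{2n}$ — that the ensuing logarithmic correction to $\tilde\nf_\vt$ contributes nothing to $Q(\tilde\nf_\vt,\mu)$.
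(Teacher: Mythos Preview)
Your construction of $\nf_{\vt}$ via $\mathrm{Ext}^1_{\mathrm{MHM}(\mathbb{A}^1)}(\QQ_{\mathbb{A}^1},j_{!*}\M(a))\cong\mathrm{Hom}_{\MHS}(\QQ(0),\IH^1(\mathbb{A}^1,\M(a)))$ is correct and is a clean alternative to the paper's more geometric route, which builds the extension out of the relative cohomology $H^{n+1}(\X\setminus X_{\infty},X_t)$ of the underlying family.  Both yield an admissible normal function with topological invariant $\vt(1)$ and with vanishing singularities on $\mathbb{A}^1$, so Lemma~\ref{L8.1} applies and the assertion about $L\v_{\vt}$ follows.  The paper's version has the advantage of making the subsequent cycle discussion immediate; yours is more self-contained.

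The gap is in your third and fourth paragraphs.  Non-singularity at $0$ lets you choose \emph{some} lift with $c_1=\cdots=c_n=0$, and non-singularity at $c$ lets you choose \emph{some} lift with $a=0$; it does not, by itself, give a single lift doing both.  Your phrase ``the lift compatible with $\widetilde{\mathcal V}$'' does not pin down a lift: $\widetilde{\mathcal V}$ only determines $\tilde{\nf}_{\vt}$ modulo $\MM_{\QQ}(a)$.  The attempted rescue in the fourth paragraph --- that the logarithmic correction pairs trivially with $\mu$ because of Hodge--weight reasons --- is not right either: for $\beta\in\MM_{\QQ}$ the pairing $Q(\beta,\mu)$ is a $\QQ$-combination of the periods $\e_j$, and these are precisely the multivalued functions you are trying to exclude.

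What is actually needed is the statement that the class $[\nf_{\vt}]$ restricts to zero in $H^1(\fux,\MM)$, equivalently that $\IH^1(\bar{\fu},\MM)=0$.  The paper obtains this by Mayer--Vietoris: the last term $\IH^1(\fuo,\MM)\oplus\IH^1(\fuc,\MM)$ vanishes, and the first map $\MM_p^{T_0}\oplus\MM_p^{T_c}\to\MM_p$ is surjective because (dually) $\ve_0\in(\MM_p^{\vee})^{T_0}$ and $\ve_1,\ldots,\ve_n\in(\MM_p^{\vee})^{T_c}$ span $\MM_p^{\vee}$ (Lemma~\ref{L3.1b}).  In the language of your argument: having adjusted $\tilde{\nf}_{\vt}$ to be single-valued at $0$, the further adjustment needed at $c$ is some $\gamma$ with $(T_c-I)\gamma=\beta_c$; since $\ker(T_0-I)+\ker(T_c-I)=\MM_{\QQ,p}$, one may take $\gamma\in\ker(T_0-I)$, preserving single-valuedness at $0$.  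That surjectivity is the missing ingredient.
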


\begin{proof}
Recall that $\M$ is a summand of the $n^{\text{th}}$ cohomology of some $f_U\colon \X_U\to U$, or more precisely of its quotient $H_{\text{var}}^n$ by the fixed part $H^n_{\text{fix}}=H^0(U,R^n (f_U)_* \QQ_{\X_U})$. Let $\X\supset \X_U$ be our smooth compactification, and consider the extension in $\mathrm{AVMHS}(U)$ with fibers\small $$0\to H^n_{\text{var}}(X_t)\to H^{n+1}(\X\setminus X_{\infty},X_t)\to \ker \{H^{n+1}(\X\setminus X_{\infty})\to H^{n+1}_{\text{fix}}\}\to 0.$$\normalsize Pushing forward by $H^n_{\text{var}}\twoheadrightarrow \M$ on the left and pulling back by the composition of $\vt$ with the inclusion of $\IH^1(\mathbb{A}^1,\MM)$ on the right, we get an element $\nf_{\vt}\in \mathrm{Ext}^1_{\mathrm{AVMHS}(U)}(\QQ(-a),\M)\cong \mathrm{Ext}^1_{\mathrm{AVMHS}(U)}(\QQ(0),\M(a))$. Its topological invariant $[\nf_{\vt}]\in \mathrm{Hom}_{\text{MHS}}(\QQ(0),H^1(U,\MM)(a))$ is tautologically the (nonzero) image of $1$ under $\QQ(0)\overset{\vt}{\hookrightarrow}\IH^1(\mathbb{A}^1,\MM(a))\hookrightarrow H^1(U,\MM(a))$. In particular, it has no singularities on $\mathbb{A}^1$.  Apply Lemma \ref{L8.1} to this $\nf_{\vt}$.

It remains to check existence of a lift $\tilde{\nf}_{\vt}$ with no monodromy on $\fux$. This boils down to whether $[\nf_{\vt}]$ restricts to zero in $H^1(\fux,\MM)$. Writing $\bar{\fu}=\fux\cup\{0,c\}$, $[\nf_{\vt}]|_{\fux}$ clearly lies in the image of $\IH^1(\bar{\fu},\MM)$.  But in the Mayer-Vietoris sequence $$\MM_p^{T_0}\oplus \MM_p^{T_c}\to \MM_p \to \IH^1(\bar{\fu},\MM)\to \IH^1(\fuo,\MM)\oplus \IH^1(\fuc,\MM)$$ the first arrow is surjective (replace $\MM$ by $\MM^{\vee}$ and argue that $(\MM^{\vee}_p)^{T_0}$ contains $\ve_0$ and $(\MM_p^{\vee})^{T_c}$ contains $\ve_1,\ldots,\ve_n$) and the final term is zero; so we are done.
\end{proof}

\begin{rem}\label{R8.3}
The existence of the single-valued lift on $\fux$ is made out to be a harder result in more special cases in \cite{DK,BKV,Ke}; but this is because for the applications in those works, an exact identification of the current representing the lift was required.
\end{rem}

\begin{defn}\label{D8.1}
The extension of $\QQ$-VMHS $$0\to \M\to \nf_{\vt}\to \QQ(-a)\to 0$$ corresponding to the normal function in Theorem \ref{T8.2} is called an \emph{Ap\'ery extension}.
\end{defn}

The Beilinson-Hodge Conjecture predicts the existence of a cycle $\mathfrak{Z}_{\vt}\in \mathrm{CH}^a(\X\setminus X_{\infty},2a-n-1)_{\QQ}$ giving rise to $\nf_{\vt}$.  When this exists, $\v_{\vt}(0)$ can be computed (up to $\QQ(a)$) as follows: first, $\mathfrak{Z}_{\vt}(0):=\imath^*_{X_0}\mathfrak{Z}_{\vt}\in H^{n+1}_{\mathrm{M}}(X_0,\QQ(a))$ has $\mathrm{AJ}_{X_0}(\mathfrak{Z}_{\vt}(0))\in \mathrm{Ext}^1_{\text{MHS}}(\QQ(0),H^n(X_0,\QQ(a)))$.  Next, the composition $\QQ(0)\cong (\psi_0\M^{\vee})_{T_0}\hookrightarrow H^n_{\lim}(X_t)_{T_0}(n)\overset{\mathtt{sp}}{\to}H_n(X_0)$ of MHS-morphisms sends $1\mapsto Q(\mu_0)\mapsto \mathrm{Res}_{X_0}(\tfrac{\mathrm{dlog}(\ux)}{(2\pi\ay)^n})=:\mu_{X_0}$; and so pairing with $\mu_{X_0}$ induces $H^n(X_0,\QQ)\twoheadrightarrow \QQ(0)$. By \cite[Cor. 5.3]{7K} we therefore have
\begin{align*}
\v_{\vt}(0)&\equiv \lim_{t\to 0}Q(\mathrm{AJ}_{X_t}(\mathfrak{Z}_{\vt}(t)),\mu_t)\\
&\equiv \langle \mathrm{AJ}_{X_0}(\mathfrak{Z}_{\vt}(0),\mu_{X_0}\rangle \in \CC/\QQ(a)\cong \mathrm{Ext}^1_{\text{MHS}}(\QQ(0),\QQ(a).
\end{align*}
In this scenario, the second line typically factors through the ``Borel'' regulator $H^1_{\mathrm{M}}(\text{Spec}(K),\QQ(a))\to \CC/\QQ(a)$, with $K$ the field of definition of $\mathfrak{Z}_{\vt}$.  When $K=\QQ$, one then has $\v_{\vt}(0)\in\QQ\zeta(a)$.  Note that for families of $K3$ surfaces ($n=2$), there are two possibilities: $a=3$ and $a=2$. Both do occur \cite{GK}.  Similarly, for elliptic curves ($n=1$), Example \ref{E8.2} below shows that both $a=2$ and $a=1$ happen.

Putting everything together, \emph{provided one can find enough embeddings} $\vt$, and either assuming the BHC or constructing the cycles, one would obtain that:
\begin{itemize}[leftmargin=0.5cm]
\item $\kappa(1),\ldots,\kappa(d-h)$ are of the form $\v_{\vt}(0)\underset{\QQ(0)}{\equiv}\langle\mathrm{AJ}_{X_0}(\mathfrak{Z}_{\vt}(0)),\mu_{X_0}\rangle$, hence are periods; and
\item with an assumption on the field of definition, they are actually rational multiples of Riemann zeta values.
\end{itemize}
However, we caution the reader that there are several obstacles to the existence of such embeddings (especially multiple, independent ones), the first of which is that $\IH^1(\mathbb{A}^1,\MM)$ may not be Hodge-Tate.  
Even if it is, it can possess nontrivial extension classes which ``obstruct'' such embeddings (which are after all Hodge classes), meaning that one must consider biextensions of VMHS on $U$; though in that case it is likely that the resulting $\kappa(j)$'s can still be analyzed in terms of (higher) cycles on Zariski-open subsets of the fibers.  Our assumption that $\M$ be of type $(1,1,\ldots,1)$ also imposes severe limitations: if $n$ is even, then there can be at most one\footnote{This follows from the proof of Theorem \ref{T8.3} below.} Hodge class in $\IH^1(\mathbb{A}^1,\MM)$; but this just means that a more general study is in order.

We finish with one (still fairly broad) case where we only want \emph{one} embedding, and that embedding fortunately \emph{must} exist:

\begin{thm}\label{T8.3}
Assume that $\M$ arises from a good, reflexive, tempered Laurent polynomial $\vf$, and that $d=2$.  Then we have an isomorphism $\IH^1(\mathbb{A}^1,\MM)\overset{\cong}{\underset{\vt}{\leftarrow}}\QQ(-a)$ for some $a\in[\tfrac{n+1}{2},n+1]\cap\ZZ$.  The resulting admissible normal function satisfies $L\v_{\vt}=-kt$ for some $k\in\bar{\QQ}^{\times}$, and $\kappa(1)=\tfrac{1}{k}\v_{\vt}(0)$.
\end{thm}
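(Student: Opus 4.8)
The plan is to identify $\IH^1(\mathbb{A}^1,\MM)$ by hand, feed the resulting embedding into Theorem~\ref{T8.2}, and then recognize the associated normal function as the Frobenius solution of Theorem~\ref{T8.1}.

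First I would collect the consequences of the two hypotheses. Goodness and reflexivity of $\vf$ give $\F_e^n\cong\co(1)$ by Example~\ref{E3.1}, so $h=1$; and $d=2$ forces, by Remark~\ref{R8.2}(ii), $L^{\dagger}=L$ and $\Sigma^{\times}=\{c,c'\}$ with conifold monodromy at both points, so $\Sigma=\{0,c,c',\infty\}$ and $\chi(U)=-2$. Since $\mathbb{A}^1$ is affine of dimension one, $H^{\geq 2}(\mathbb{A}^1,j_{!*}\MM)=0$, and $H^0(\mathbb{A}^1,j_{!*}\MM)=\MM^{\pi_1(U)}\subseteq\ker(T_0-I)\cap\ker(T_c-I)=\{0\}$, so Euler--Poincar\'e yields
$$\dim_{\QQ}\IH^1(\mathbb{A}^1,\MM)=-\bigl(r\,\chi(U)+\dim\MM^{T_0}+\dim\MM^{T_c}+\dim\MM^{T_{c'}}\bigr)=-(-2(n+1)+1+2n)=1.$$
A one-dimensional $\QQ$-MHS is $\QQ(-a)$ for a unique $a\in\ZZ$, and this isomorphism is the desired $\vt$. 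For the range of $a$ I would invoke the standard weight estimates: $j_{!*}\MM$ is pure of weight $n+1$, so $\IH^1(\mathbb{A}^1,\MM)$ has weights $\geq n+1$, while the only weight that can appear beyond the pure class $\IH^1(\PP^1,\MM)$ is a Tate twist by $\QQ(-1)$ of a subquotient of the weight-$\leq 2n$ limiting MHS $\psi_{\infty}\MM$, hence is $\leq 2n+2$; thus $2a\in[n+1,2n+2]$, i.e. $a\in[\tfrac{n+1}{2},n+1]\cap\ZZ$.

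Next I would apply Theorem~\ref{T8.2} with this $\vt$: it outputs $\nf_{\vt}\in\mathrm{ANF}(\M(a))\setminus\{0\}$ whose truncated HNF $\v_{\vt}=Q(\tilde{\nf}_{\vt},\mu)$ satisfies $L\v_{\vt}=tP_{\vt}(t)$ with $\deg P_{\vt}\leq d-h-1=0$. So $P_{\vt}$ is a constant, nonzero by Lemma~\ref{L8.1}; writing $P_{\vt}\equiv -k$ gives $L\v_{\vt}=-kt$. To pin down $k\in\bar{\QQ}^{\times}$ I would use that one may take $\vf\in\bar{\QQ}[x_1^{\pm1},\dots,x_{n+1}^{\pm1}]$ (Definition~\ref{D7.1}), so $(\M,\nabla)$, $L$, $\F_e^n$ with $\mu$, and the extension $\nf_{\vt}$ are defined over $\bar{\QQ}$, and the Hodge lift $\tilde{\nf}_{\vt,F}$ may be chosen over $\bar{\QQ}$. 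Since $\tilde{\nf}_{\vt,F}$ is single-valued and $\tilde{\nf}_{\vt,\QQ}$ flat, $D^j\tilde{\nf}_{\vt}=-D^j\tilde{\nf}_{\vt,F}$ is a $\bar{\QQ}(t)$-rational section of $\M$ for $j\geq 1$; expanding $L\,Q(\tilde{\nf}_{\vt},\mu)$ by Leibniz, the $j=0$ contributions collapse into $Q(\tilde{\nf}_{\vt},L\mu)=0$, leaving only $\bar{\QQ}(t)$-valued terms $q_{r-i}\,Q(D^j\tilde{\nf}_{\vt},D^{i-j}\mu)$ with $j\geq 1$, so $L\v_{\vt}\in\bar{\QQ}[t]$ and $k\in\bar{\QQ}^{\times}$. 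Theorem~\ref{T8.2} also supplies a lift with $\v_{\vt}$ analytic on a disk of radius $>|c|$ about $0$, so $\v_{\vt}$ is single-valued near $c$.

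Then $\tfrac1k\v_{\vt}$ solves $L(\cdot)=-t$, is analytic at $0$, and has no monodromy about $c$; the uniqueness clause of Theorem~\ref{T8.1} (with $\ell=1$) identifies it with $V^{[1]}$, whence $\kappa(1)=1^{r}V^{[1]}(0)=\tfrac1k\v_{\vt}(0)$, which is the claim. The hard part is the first step: verifying that $\IH^1(\mathbb{A}^1,\MM)$ is \emph{exactly} one-dimensional (this is where the conifold hypothesis at both $c$ and $c'$, forced by $d=2$, and the vanishing $H^0=0$ enter) and locating $2a$ in the stated weight window; the $\bar{\QQ}$-rationality of $k$ is a comparatively routine field-of-definition bookkeeping, and the final identification with $V^{[1]}$ is immediate from Theorem~\ref{T8.1}.
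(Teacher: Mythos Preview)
Your argument is correct and follows essentially the same route as the paper: invoke Remark~\ref{R8.2}(ii) for the structure of $\Sigma^{\times}$, compute $\dim\IH^1(\mathbb{A}^1,\MM)=1$ by Euler--Poincar\'e, read off the weight range from the exact sequence with $\IH^1(\PP^1,\MM)$ and $(\psi_{\infty}\M)_{T_{\infty}}(-1)$, then feed the resulting $\vt$ into Theorem~\ref{T8.2} (with $h=1$) and finish via Theorem~\ref{T8.1}. The only substantive addition is your field-of-definition argument for $k\in\bar{\QQ}^{\times}$, which the paper's proof leaves implicit; your Leibniz computation reducing $L\v_{\vt}$ to pairings of $\bar{\QQ}(t)$-rational sections is a clean way to handle this.
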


\begin{proof}
Note that by Remark \ref{R8.2}(ii), $\Sigma^{\times}$ comprises two conifold points (and also $L^{\dagger}=L$).  By Euler-Poincar\'e, the rank of $\IH^1(\mathbb{A}^1,\MM)$ is given by $\sum_{\sigma\in\Sigma\setminus\{\infty\}}\rk(T_{\sigma}-I)-r\chi(\mathbb{A}^1)=n+1+1-(n+1)=1.$ So one of the end terms in the exact sequence of MHS $$0\to \IH^1(\PP^1,\MM)\to \IH^1(\mathbb{A}^1,\MM)\to(\psi_{\infty}\M)_{T_{\infty}}(-1)\to 0$$ is zero, and the other has rank one.  (Applying E-P to the first term, either $\rk(T_{\infty}-I)=n$ and the first term vanishes, or it $=n+1$ and the last term vanishes.)  A rank-one MHS is of the form $\QQ(-a)$; and the first term can only have weight $n+1$, while the last term can have weights between $n+2$ and $2n+2$. 
\end{proof}

\begin{rem}\label{R8.4}
The examples in \cite{Go} are of this type, and the corresponding (higher) normal functions are constructed explicitly in \cite{GK}.  In the $V_{18}$ case, as noticed by \cite{dS}, we have $k\notin \QQ$ (in fact $k=\sqrt{-3}$, and $\v_{\vt}(0)\in(2\pi\ay)^3 \QQ$); though the family is defined over $\QQ$, the normalization of $X_{\infty}$ (and consequently $\mathfrak{Z}_{\vt}$) is only defined over $\QQ(\sqrt{-3})$.
\end{rem}

\begin{example}\label{E8.2}
For $n=1$ (and $d=2$), we demonstrate the two possibilities in Theorem \ref{T8.3}:\vspace{2mm}

\noindent {\bf (a)} $\underline{\IH^1(\PP^1,\MM)=\{0\}}$:  $\vf=(1-x_1^{-1})(1-x_2^{-1})(1-x_1-x_2)$ yields the ``little Ap\'ery'' family associated with irrationality of $\v_{\vt}(0)=\zeta(2)$, where $\mathfrak{Z}_{\vt}\in \mathrm{CH}^2(\X\setminus X_{\infty},2)$ is obtained by pulling the box cycle $\bx$ back along the involution $(x_1,x_2,t)\mapsto (\tfrac{x_1}{x_1-1},\tfrac{1-x_2}{1-x_1-x_2},-\tfrac{1}{t})$ \cite[\S5.2]{Ke}.  In direct analogy to \cite[\S5.2]{GK}, one can show that \small $$\v_{\vt}(t)=\int_{\RR_{\leq 0}^{\times 2}}\frac{\mathrm{dlog}(\ux)}{t+\vf(\ux)}=\sum_{j\geq 0} (-t)^j \int_{\RR_{\leq 0}^{\times 2}}\frac{\mathrm{dlog}(\ux)}{\vf(\ux)^{j+1}}=\zeta(2)+(3\zeta(2)-5)t+\cdots.$$\normalsize Applying $L=D^2-t(11D^2+11D+3)-t^2(D+1)^2$ and invoking Theorem \ref{T8.3}, we find $k=5$ hence $\kappa(1)=\tfrac{\zeta(2)}{5}$.\vspace{2mm}

\noindent{\bf (b)} $\underline{\IH^1(\PP^1,\MM)\neq \{0\}}$:  $\vf=x_1^{-1}x_2^{-1}(1+x_1+x_2+x_2^2)^2$ yields a family with singular fibers of types $\mathrm{I}_4,\mathrm{I}_1,\mathrm{I}_1,\mathrm{I}_0^*$ at $0,\tfrac{1}{12},-\tfrac{1}{4},\infty$ respectively. It has a nontorsion\footnote{Observe that $x_1=3(u-1)^{-4}(u-\ay\sqrt{3})^2(u-\tfrac{\ay}{\sqrt{3}})^2$, $x_2=(u-1)^{-2}(u+1)^2$ yields a normalization $\PP^1\to X_{\frac{1}{12}}$ sending $u=0,\infty$ to the node $(3,1)$. The preimage of the cycle is $[\tfrac{\ay}{\sqrt{3}}]-[\ay\sqrt{3}]$, and $\tfrac{\ay/\sqrt{3}}{\ay\sqrt{3}}=\tfrac{1}{3}\in\CC^{\times}$ has infinite order.} section given by $\mathfrak{Z}_{\vt}=[(0,\zeta_3)]-[(0,-\zeta_3^2)]\in\mathrm{CH}^1(\X)$, where $\zeta_3:=e^{\frac{2\pi\ay}{3}}$. The Abel-Jacobi map yields 
\begin{align*}
\v_{\vt}(t)&=\int_{(0,\zeta^2_3)}^{(0,\zeta_3)}\mu=\frac{1}{2\pi\ay}\int_{\zeta_3^2}^{\zeta_3}\oint_{|x_1|=\epsilon}\frac{dx_1/x_1}{1-t\vf(\ux)}\frac{dx_2}{x_2}=\sum_{j\geq 0} t^j \int_{-\ay}^{\ay}[\vf^k]_{x_1}\frac{dx_2}{x_2}\\
&=-\tfrac{2}{3}\pi\ay+(4\sqrt{3}\ay-\tfrac{4}{3}\pi\ay)t+(18\sqrt{3}\ay-12\pi\ay) t^2+\cdots\,,
\end{align*}
where $[\vf^k]_{x_1}$ means  terms constant in $x_1$. Applying $L=(1-8t-48t^2)D^2-(8t+96t^2)D-(2t+36t^2)$ and invoking Theorem \ref{T8.3} once more, we find $k=-4\sqrt{3}\ay$ and $\kappa(1)=\tfrac{\pi}{6\sqrt{3}}$.\vspace{2mm}

\noindent{\bf (c)}  The simplest example of what we mean by an ``obstruction'' occurs for $n=1$ and $d=3$, for the polynomial $\vf=x_1^{-1}x_2^{-1}(1+x_1+x_2^2)^2-8$ from the end of Remark \ref{R8.2}(ii) (with an $\mathrm{I}_1$ at $\infty$). As in (b), there is a nontorsion section $\mathfrak{Z}=[(0,\ay)]-[(0,-\ay)]\in \mathrm{CH}^1(\X)$, which limits in particular to $\tfrac{\ay(\sqrt{2}-1)}{\ay(\sqrt{2}+1)}=3-2\sqrt{2}\in \CC^{\times}$ in the group law on $X_{\infty}^{\text{sm}}$.\footnote{Normalize $X_{\infty}$ by $x_1=2(u-1)^{-4}(u-\ay(\sqrt{2}+1))^2(u-\ay(\sqrt{2}-1))^2$, $x_2=(u-1)^{-2}(u+1)^2$; the preimage of $\mathfrak{Z}$ is $[(\ay(\sqrt{2}-1)]-[(\ay(\sqrt{2}+1)]$.}
The difference is that in this case $\IH^1(\mathbb{A}^1,\MM)$ has rank $2$, and is a (nonsplit) extension of $(\psi_{\infty}\M)_{T_{\infty}}(-1)\cong \QQ(-2)$ by $\IH^1(\PP^1,\MM)\cong \QQ(-1)$ with class $\log(3-2\sqrt{2})\in \CC/\QQ(1)\cong \mathrm{Ext}^1_{\text{MHS}}(\QQ(-2),\QQ(-1))$.  So there is no morphism $\QQ(-2)\hookrightarrow \IH^1(\mathbb{A}^1,\MM)$, and one must deal with biextensions.  This still may be treated via a higher cycle, but this cycle lives in $\mathrm{CH}^2(\X\setminus \{X_{\infty}\cup|\mathfrak{Z}|\},2)$ and does not lift to $\mathrm{CH}^2(\X\setminus X_{\infty},2)$.
\end{example}

\begin{rem}\label{R8.5}
We have argued above that $\kappa(1),\ldots,\kappa(d-1)$ are interesting invariants of $\M$ related to algebraic cycles; the natural reaction is to wonder if $\kappa(d)$, $\kappa(d+1)$, etc. are similarly interesting.  In fact, to expand on \cite[Rem. 32]{BV} a bit, they are not: taking $L\in K[t,D]$, they are always contained in $K[\kappa(1),\ldots,\kappa(d-1)]$ in view of Corollary \ref{C6.0}. For example, if $L^{\dagger}=L$ ($\implies$ $Q_j=P_j$) then $$\kappa(d)=\frac{-d^r}{P_d(-d)}\sum_{j=0}^{d-1} j^{-r} P_j(-j)\kappa(j),$$ where ``$0^{-r}P_0(0)$'' is to be read as $\lim_{s\to 0}s^{-r}P_0(-s)=\lim_{s\to 0}\tfrac{(-s)^r}{s^r}=(-1)^r$. So in the Ap\'ery $\zeta(3)$ case (Examples \ref{E7.1} and \ref{E8.1}), where $\kappa(1)=\tfrac{\zeta(3)}{6}$ (and $\kappa(0)=1$), we find $\kappa(2)=-8+\tfrac{5}{6}\zeta(3)$; one can also show (in the notation of Theorem \ref{T8.1}) that the solutions of the inhomogeneous equations satisfy $V^{[2]}(t)=-1-\tfrac{35}{48}\zeta(3)A(t)+5V^{[1]}(t)$.
\end{rem}

\curraddr{${}$\\
\noun{Dept. of Mathematics and Statistics, Campus Box 1146}\\
\noun{Washington University in St. Louis}\\
\noun{St. Louis, MO} \noun{63130, USA}}

\email{${}$\\
\emph{e-mail}: matkerr@wustl.edu}
\end{document}